\newtheorem{theorem}{Theorem}[section]
\newtheorem{lemma}[theorem]{Lemma}
\newtheorem{claim}[theorem]{Claim}
\newtheorem{cor}[theorem]{Corollary}
\newtheorem{conj}[theorem]{Conjecture}
\newcommand{\bproof}{\noindent{\bf Proof. }}
\newcommand{\eproof}{\hfill $\bullet$\\}
\newcommand{\rank}{\mbox{\rm rank }}
\newcommand{\real}{{\mathbb R }}
\newcommand{\rat}{{\mathbb Q}}
\newcommand{\tran}{{\mbox{td}}}
\newcommand{\scrs}{{{\mathcal S}}}
\newcommand{\mb}[1]{\mathbb{#1}}
\newcommand{\nib}[1]{\noindent {\bf #1}}
\newcommand{\sm}{\setminus}
\newcommand{\ov}{\overline}
\newcommand{\sub}{\subseteq}
\newcommand{\es}{\emptyset}
\newcommand{\eps}{\varepsilon}
\newcommand{\bsize}[1]{\left| #1 \right|}
\title{Global rigidity of 2-dimensional
direction-length frameworks}
\author{Katie Clinch\thanks{School of Mathematical Sciences,
Queen Mary, University of London, Mile End Road, London E1 4NS,
UK. Email: k.clinch@qmul.ac.uk} \and Bill Jackson\thanks{School
of Mathematical Sciences, Queen Mary, University of London, Mile End
Road, London E1 4NS, UK. Email: b.jackson@qmul.ac.uk} \and
Peter Keevash\thanks{Mathematical Institute, University of Oxford, Oxford, UK.
Email: keevash@maths.ox.ac.uk. Research supported in part by ERC Consolidator Grant 647678.} }
\date{2 July 2016}
\begin{document}
\maketitle

\begin{abstract}
A 2-dimensional direction-length framework is a collection of points
in the plane which are linked by pairwise constraints that fix the
direction or length of the line segments joining certain pairs of
points. We represent it as a  pair $(G,p)$, where $G=(V;D,L)$ is a
`mixed' graph and $p:V\to\real^2$ is a point configuration for $V$.
It is globally rigid if every direction-length framework $(G,q)$
which satisfies the same constraints can be obtained from $(G,p)$ by
a translation or a rotation by $180^\circ$. We show that the problem
of characterising when a generic  framework $(G,p)$ is globally rigid can be
reduced to the case when $G$ belongs to a special family of
`direction irreducible' mixed graphs, and
prove that {every} generic realisation of
a direction irreducible mixed graph $G$ is globally rigid if and
only if $G$ is 2-connected, direction-balanced and redundantly
rigid.
\end{abstract}

\section{Introduction}\label{sec:intro}

A finite configuration of points in Euclidean space with local
constraints may be informally described as globally rigid if the
constraints determine the point set up to congruence. It is a
fundamental open problem to give a nice characterisation of global
rigidity in various settings.
Our setting here is that of a $d$-dimensional {\em direction-length
framework}, which is a pair $(G,p)$, where $G=(V;D,L)$ is a `mixed'
graph and $p:V\to\real^d$ is a point configuration for $V$. (We will be particularly concerned with the case when $d=2$.)
We call
the graph $G$ {\em mixed} because it has two types of edges: we
refer to edges in $D$ as {\em direction edges} and edges in $L$ as
{\em length edges}. The graph may contain parallel edges as long as they are of different types.
Two  direction-length frameworks $(G,p)$ and
$(G,q)$ are {\em equivalent} if $p(u)-p(v)$ is a scalar
multiple of $q(u)-q(v)$ for all $uv\in D$ with $q(u)\neq q(v)$, and
$\|p(u)-p(v)\|=\|q(u)-q(v)\|$ for all $uv\in L$. Two point
configurations $p$ and $q$ for $V$ are {\em congruent} if either
$p(u)-p(v)=q(u)-q(v)$ for all $u,v\in V$, or $p(u)-p(v)=q(v)-q(u)$
for all $u,v\in V$. (Thus $p$ and $q$ are congruent if $p$ can be
obtained from $q$ by a translation, possibly followed by a
rotation by $180^\circ$.) A direction-length framework $(G,p)$ is {\em globally
rigid} if $p$ is congruent to $q$ for every framework $(G,q)$ which
is equivalent to $(G,p)$. It is {\em rigid} if there exists an
$\eps>0$ such that if a framework $(G,q)$ is equivalent to $(G,p)$
and satisfies $\|p(v)-q(v)\|<\eps$ for all $v\in V$ then $p$ is
congruent to $q$ (equivalently every continuous motion of the
vertices of $(G,p)$ which satisfies the direction and length
constraints given by the edges results in a framework $(G,q)$ with
$p$ congruent to $q$). The framework $(G,p)$ is {\em redundantly
rigid} if $(G-e,p)$ is rigid for all $e\in D\cup L$.

\begin{figure}
\vspace{-10cm}
\centering
\includegraphics[scale=0.8]{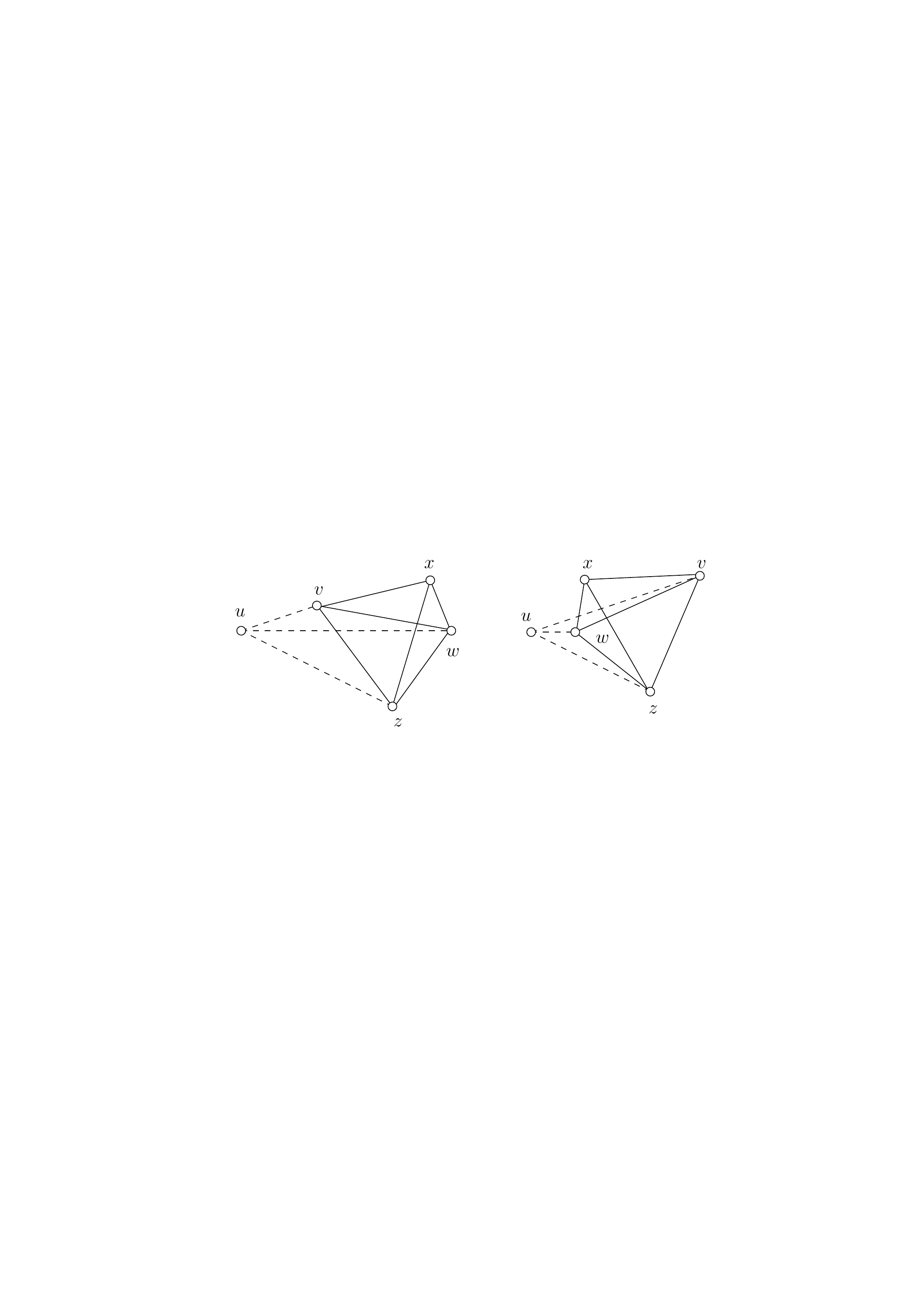}
\vspace{-10.5cm}
\caption{\label{fig1} Two equivalent but non-congruent direction-length frameworks. We use solid or dashed lines to indicate
length or direction constraints, respectively. The frameworks are
rigid but not globally rigid.}
\end{figure}

We will consider {\em generic} frameworks, meaning that the set
containing the coordinates of all of the vertices is algebraically
independent over the rationals; this eliminates many pathologies. It
follows from \cite{JJ2,JK1} that rigidity is a `generic property' in
the sense that if some realisation of a mixed graph $G$ as a generic framework in $\real^d$ is
rigid then all generic realisations of $G$ in $\real^d$ are rigid.
This implies that redundant rigidity is also a generic property and
allows us to describe a mixed graph $G$ as being {\em rigid} or {\em
redundantly rigid} in $\real^d$ if some (or equivalently if every)
generic realisation of $G$ has these properties. It is not known
whether global rigidity is a generic property (however this
statement would follow from Conjecture \ref{characterise} below in
the 2-dimensional case).

Both rigidity {and} global rigidity are known to be generic
properties for $d$-dimensional {\em pure frameworks}, i.e. frameworks which
contain only length constraints or only direction constraints. We will not
give formal definitions for such frameworks, but note that they are
similar to those for direction-length frameworks, except that the
notion of congruence has to allow not only translations, but also
rotations in the case of length-pure frameworks, and dilations in
the case of direction-pure frameworks.

The problems of characterising rigidity and global rigidity for
2-dimensional generic length-pure frameworks were solved by Laman
\cite{L} and  Jackson and Jord\'an \cite{JJ}, respectively. In
particular, \cite{JJ} proved that a $2$-dimensional generic length-pure framework $(G,p)$ is globally length-rigid if and only if
either $G$ is a complete graph on at most $3$ vertices, or $G$ is
$3$-connected and redundantly length-rigid. The problems of
characterising rigidity and global rigidity for $d$-dimensional
generic length-pure frameworks are open for $d\geq 3$. In contrast,
Whiteley \cite{W} showed that rigidity {and} global rigidity are
{\em equivalent} generic properties for direction-pure frameworks and
characterised the $d$-dimensional generic frameworks which have
these properties for all $d$.

Since direction-length frameworks
are more general than length-pure frameworks, we will henceforth restrict our attention
to direction-length frameworks of dimension two.
Rigid generic (2-dimensional) direction-length frameworks
were characterised by Servatius and Whitely in \cite{SW}.
They also showed that every generic realisation of a rigid mixed
graph with exactly one length edge is globally rigid. Further
results on  global rigidity were obtained by Jackson and Jord\'an in
\cite{JJ1} who showed that two necessary conditions for a generic
2-dimensional direction-length framework $(G,p)$ to be globally
rigid are that $G$ is 2-connected and {\em direction-balanced} i.e.
whenever $H_1,H_2$ are subgraphs of $G$ with $G=H_1\cup H_2$,
$V(H_1)\cap V(H_2)= \{u,v\}$ and $V(H_1)\sm V(H_2)\ne \es \ne
V(H_2)\sm V(H_1)$, both $H_1$ and $H_2$ must contain a direction
edge of $G$ distinct from $uv$. They also showed that these
conditions are sufficient when $G$ is redundantly rigid and has
$2|V|-1$ edges.

We will see in Section \ref{sec:rigid} that we may define a matroid
$M(G)$ on the edge set of a mixed graph $G$ in such a way that $G$
is rigid if and only if $M(G)$ has rank $2|V|-2$. The above
sufficient condition for global rigidity, that $G$ is redundantly
rigid and has $2|V|-1$ edges, is equivalent to the edge set of $G$
being a rigid circuit of $M(G)$. A mixed graph $G$ is redundantly rigid
if and only if it is rigid and every edge of  $G$ is contained in a
circuit of $M(G)$. We say that $G$ is $M$-connected if it
satisfies the stronger condition that every pair of edges of $G$ is
contained in a circuit of $M(G)$ i.e.
$M(G)$ is a connected matroid.  Clinch \cite{KC} has recently shown
that the above mentioned necessary conditions for generic global
rigidity are also sufficient when the underlying mixed graph is
$M$-connected and rigid.

\begin{theorem}\label{thm:katie} Suppose $(G,p)$ is a generic realisation of an $M$-connected rigid mixed graph $G$. Then $(G,p)$ is
globally rigid if and only if $G$ is $2$-connected and
direction-balanced.
\end{theorem}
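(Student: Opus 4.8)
The necessity of $2$-connectivity and direction-balancedness is already known from \cite{JJ1}, so the task is to prove sufficiency: if $G$ is $M$-connected, rigid, $2$-connected and direction-balanced, then \emph{every} generic realisation $(G,p)$ is globally rigid. The plan is to induct on $|E(G)|$ over this class of mixed graphs.

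For the base case, observe that among the mixed graphs in the class the ones with fewest edges are the rigid circuits: if $M(G)$ is connected and $|E(G)|=2|V(G)|-2=\rank M(G)$ then $E(G)$ is independent, so $M(G)$ is free and hence disconnected, a contradiction; and if $|E(G)|=2|V(G)|-1$ then $M(G)$ has corank $1$, so a unique circuit, and since a connected matroid on at least two elements has no coloops this circuit must be all of $E(G)$. Such a $G$ is redundantly rigid with $2|V(G)|-1$ edges, so the sufficiency criterion of Jackson and Jord\'an quoted in the Introduction applies directly and $(G,p)$ is globally rigid.

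For the inductive step the combinatorial heart is to show that every $M$-connected, rigid, $2$-connected, direction-balanced mixed graph $G$ that is not a rigid circuit admits one of the following reductions to a strictly smaller graph in the same class: (i)~an edge $e\in D\cup L$ with $G-e$ again $M$-connected, rigid, $2$-connected and direction-balanced; (ii)~a vertex $z$ of degree $3$ at which some inverse $1$-extension (delete $z$ and add one edge between two of its neighbours) yields a graph $G'$ in the class; or (iii)~a $2$-separation $\{u,v\}$ exhibiting $G$ as a $2$-sum of strictly smaller graphs $G_1,G_2$ in the class. Producing such a move while preserving $2$-connectivity and, above all, direction-balancedness --- both easily destroyed by a careless choice --- is the delicate part; it is carried out using an ear decomposition of the connected matroid $M(G)$ (write $E(G)=C_1\cup\dots\cup C_k$ with each $C_i$ a circuit that meets and properly extends $C_1\cup\dots\cup C_{i-1}$) together with an analysis of how these ears interact with the $2$-separators of $G$.

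It then remains to verify that each of (i)--(iii) preserves generic global rigidity. Case (i) is immediate, since adding an edge only strengthens the constraints. For case (iii) one uses that congruence of direction-length frameworks is generated solely by translations and the central inversion $x\mapsto -x$: given $(G,q)$ equivalent to $(G,p)$, one first argues (here $M$-connectedness of $G$, via a circuit crossing the separation, forces the virtual constraint on $\{u,v\}$ to be inherited by $q$) that the restriction of $q$ to $V(G_i)$ is equivalent to the corresponding restriction of $p$, so by induction each is congruent to it, say via $x\mapsto\epsilon_i x+c_i$ with $\epsilon_i\in\{+1,-1\}$; comparing the shared vertices $u,v$, which are distinct because $p$ is generic, forces $\epsilon_1=\epsilon_2$ and then $c_1=c_2$, so $q$ is congruent to $p$ on all of $V(G)$. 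Case (ii) is the deepest, and I expect it --- along with the bookkeeping in the combinatorial step that supplies it --- to be the main obstacle: it requires the analogue for direction-length frameworks of Connelly's theorem, that a $1$-extension of a generic globally rigid framework is again globally rigid whenever the resulting mixed graph is (redundantly) rigid. One proves this by taking a generic realisation $(G,q)$ equivalent to $(G,p)$ and using the three constraints at the degree-$3$ vertex $z$ to show that $q$ may be adjusted at $z$ so as to satisfy the restored edge of $G'$ --- for instance by a stress-matrix or averaging argument showing that the two candidate positions of $z$ must coincide --- thereby producing a realisation of $G'$ equivalent to $(G',p|_{V(G')})$, to which the inductive hypothesis applies.
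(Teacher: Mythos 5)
First, a point of comparison: the paper does not prove this statement at all --- Theorem \ref{thm:katie} is imported wholesale from Clinch \cite{KC} (``Global rigidity of 2-dimensional direction-length frameworks with connected rigidity matroids''), so there is no in-paper proof to match your argument against. Your proposal must therefore stand on its own, and as it stands it is a plan rather than a proof. The overall architecture (induction over the class via an ear decomposition of the connected matroid $M(G)$, with a base case of rigid circuits handled by the Jackson--Jord\'an result \cite{JJ1}, and reduction moves consisting of edge deletion, inverse $1$-extension, and $2$-separation analysis) is the natural adaptation of the length-pure strategy of \cite{JJ}, and your base case is correct. But the two load-bearing ingredients are both asserted rather than supplied: the trichotomy (i)--(iii) preserving $M$-connectivity, $2$-connectivity \emph{and} direction-balance simultaneously is exactly the hard combinatorial content (you say so yourself), and nothing in the proposal indicates how the ear decomposition actually yields it.

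The more serious gap is case (ii). You propose to prove the $1$-extension step ``by a stress-matrix or averaging argument,'' i.e.\ by invoking an analogue of Connelly's theorem. No such analogue is available off the shelf for direction-length frameworks: the stress-matrix sufficiency machinery (Connelly, Gortler--Healy--Thurston) is specific to length-pure frameworks and relies on global rigidity being a generic property, which --- as this paper explicitly notes --- is \emph{not known} for direction-length frameworks. The operations known to preserve global rigidity in this setting are those of \cite{JJ2}, and they do not include a general $1$-extension theorem; establishing one is essentially the main technical achievement of \cite{KC}, not a routine verification. Case (iii) also has an unaddressed subtlety: to say that $q$ restricted to $V(G_i)$ is ``equivalent to'' $p$ restricted to $V(G_i)$ you must add a virtual edge on $\{u,v\}$ to each side, and you must decide whether it is a direction or a length edge and check that the augmented pieces remain direction-balanced and $M$-connected; the direction-balance hypothesis exists precisely because a careless $2$-sum allows one side to reflect, so this bookkeeping cannot be waved through. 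In short: correct skeleton, but the proof of the theorem lives in the parts you have left blank.
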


Unfortunately, Clinch's result does not give a complete
characterisation of generic global rigidity because $M$-connectivity
is not a necessary condition for the global rigidity of generic rigid frameworks. This follows from
the above mentioned fact that every generic realisation of a
(minimally) rigid mixed graph with exactly one length edge is
globally rigid, or from the fact that global rigidity is preserved
if we join a new vertex to an existing globally rigid framework by
two direction constraints. (The underlying graphs in both
constructions are not even redundantly rigid.) We can generalise the
second construction as follows.


Suppose $(G,p)$ is a generic realisation of a mixed graph $G$ which has a
proper induced subgraph $H$ such that  the graph $G/H$ obtained from $G$ by contracting $H$ to a single
vertex (deleting all edges of $H$ but keeping all other edges of $G$, possibly as parallel edges) has only direction
edges and is the union of two edge-disjoint spanning trees. We will
see in Section \ref{sec:dirred} that
$G-e$ is not rigid for all direction edges $e$ which do not belong
to $H$ (hence $G$ is not redundantly rigid), and that $(G,p)$ is globally rigid
if and only if $(H,p|_H)$ is globally rigid.

These observations lead us to consider a more general reduction
operation for a mixed graph $G$. We say that $G$ admits a {\em
direction reduction} to a subgraph $H$ if either:
\begin{enumerate}
\item[(R1)] $H=G-e$ for some edge $e\in D$ which belongs to a direction-pure circuit in the rigidity matroid of
$G$, or
\item[(R2)] $H$ is a proper induced subgraph of $G$, and
$G/H$ is direction-pure and is the
union of two edge-disjoint spanning trees.
\end{enumerate}
If $G$ has no direction reduction, then we say that $G$ is {\em
direction irreducible}. (We will describe an efficient algorithm in
Section \ref{sec:con} which either finds a direction reduction of a
given mixed graph  or concludes that it is direction
irreducible.) An example of a direction reduction is given in Figure \ref{fig2}.

\begin{figure}
\centering
\input{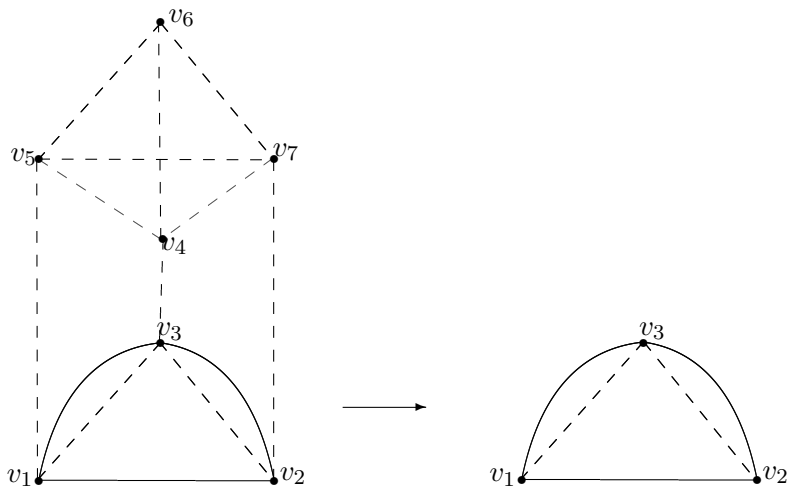}
\caption{The graph $G$ on the left is direction reducible to the subgraph $H$ on the right in two steps. Since the direction edge $v_5v_7$ is contained in the direction-pure circuit induced by $\{v_4,v_5,v_6,v_7\}$ we can delete $v_5v_7$ by (R1). The graph we now obtain by contracting $H$ to a single vertex is direction-pure and is the union of two edge-disjoint spanning trees so we can reduce $G$ to $H$ by (R2). Theorem \ref{reduce} now tells us that a generic framework $(G,p)$ is globally rigid if and only if $(H,p|_H)$ is globally rigid. Since $H$ is an $M$-connected, $(H,p|_H)$ is globally rigid by Theorem \ref{thm:katie}. Hence $(G,p)$ is globally rigid.}\label{fig2}
\end{figure}

Our first result reduces the problem of characterising the global
rigidity of a generic framework $(G,p)$ to the case when $G$ is
direction irreducible.

\begin{theorem}\label{reduce}
Suppose $(G,p)$ is a generic direction-length framework and $G$
admits a direction reduction to a subgraph $H$. Then $(G,p)$ is
globally rigid if and only if $(H,p|_H)$ is globally rigid.
\end{theorem}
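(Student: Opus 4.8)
The plan is to prove the two cases (R1) and (R2) of direction reduction separately, in each case establishing both directions of the biconditional. For case (R1), where $H = G-e$ for a direction edge $e$ lying in a direction-pure circuit of the rigidity matroid of $G$, the backward direction is almost immediate: if $(G,q)$ is equivalent to $(G,p)$ then $(G-e,q)$ is equivalent to $(G-e,p)$, so global rigidity of $(H,p|_H)$ forces $p$ congruent to $q$. The forward direction is the substantive part of (R1): I would show that the direction constraint of $e$ is already \emph{implied} by the constraints of $H=G-e$ at a generic configuration, so that every framework equivalent to $(G-e,p)$ is in fact equivalent to $(G,p)$. This should follow from the hypothesis that $e$ lies in a direction-pure circuit $C$ of $M(G)$: the circuit gives a linear dependence among the rows of the rigidity matrix indexed by $C$, and since $C$ is direction-pure this dependence is among direction rows only; one then argues (using genericity and the structure of the direction-rigidity matrix / the fact that direction-rigidity is governed by the generic rigidity matroid of a direction-pure graph, via Whiteley's result) that the direction of $e$ is determined up to sign by the directions of $C-e$, and the sign is pinned down by continuity/genericity. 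Hence $(G-e,q)$ equivalent to $(G-e,p)$ implies $(G,q)$ equivalent to $(G,p)$, and the two frameworks have the same global rigidity status.

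For case (R2), where $H$ is a proper induced subgraph and $G/H$ is direction-pure and decomposes into two edge-disjoint spanning trees, the key structural input is Whiteley's theorem: a direction-pure graph whose edge set is the union of two edge-disjoint spanning trees is (generically) globally direction-rigid, i.e. any equivalent direction-pure realisation differs only by a translation and a dilation. I would argue as follows. Given $(G,q)$ equivalent to $(G,p)$, restrict to $H$ to get $(H,q|_H)$ equivalent to $(H,p|_H)$. If $(H,p|_H)$ is globally rigid, then after a translation we may assume $q|_H$ is either equal to $p|_H$ or equal to $-p|_H$ (point reflection). In either case the image of the contracted vertex is determined, and the direction edges of $G$ not in $H$ now constrain the remaining vertices $V(G)\setminus V(H)$: contracting $H$ to its (now fixed) image vertex, the framework on $G/H$ given by $q$ is equivalent as a direction-pure framework to the one given by $p$, and since $G/H$ is two edge-disjoint spanning trees, global direction-rigidity forces $q$ on $G/H$ to be a translate and dilate of $p$ on $G/H$. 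Because the contracted vertex is fixed to its $p$-value (or its negative), the dilation factor is forced to be $1$ (respectively $-1$) and the translation to be trivial (respectively the point reflection), so $q = p$ (resp. $q = -p$) on all of $V(G)$, giving congruence. The converse direction of (R2) — global rigidity of $(G,p)$ implies that of $(H,p|_H)$ — is handled by a perturbation argument: given any $(H,q')$ equivalent to $(H,p|_H)$, extend it to a framework $(G,q)$ equivalent to $(G,p)$ by solving for the positions of $V(G)\setminus V(H)$ using the direction constraints of $G/H$ (these are solvable because the trees are spanning, so every such vertex is connected to $H$); then global rigidity of $(G,p)$ gives $q$ congruent to $p$, hence $q'=q|_H$ congruent to $p|_H$.

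The main obstacle I expect is the forward direction of case (R1): making precise the claim that a direction-pure circuit through $e$ forces the direction of $e$ to be determined (up to the global sign ambiguity inherent in direction-length congruence) by the other constraints, at \emph{every} equivalent generic-or-not framework $(G-e,q)$, not just nearby ones. The subtlety is that a linear dependence in the rigidity matrix at $p$ certifies that infinitesimal direction motions preserving $H$ also preserve $e$, but global rigidity requires controlling finite deformations; one must upgrade this using the fact that for direction-pure frameworks, rigidity and global rigidity coincide (Whiteley), together with genericity to rule out the measure-zero set of bad configurations. A clean way to package this is to observe that the direction-pure graph induced by the circuit $C$ is generically globally direction-rigid (being a direction-rigidity circuit it is redundantly direction-rigid and hence, by Whiteley, globally direction-rigid on its own vertex set up to translation and dilation), which pins the direction of $e$ relative to $C-e$ exactly. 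I would also need to be careful in (R2) about the case analysis between $q|_H = p|_H$ and $q|_H = -p|_H$, and about the genericity of the extended configuration $q$ in the converse of (R2) — but since global rigidity of $(G,p)$ is a statement about \emph{all} equivalent $(G,q)$, not just generic ones, no genericity of $q$ is actually needed there, which simplifies matters.
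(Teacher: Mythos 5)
Your overall architecture---treating (R1) and (R2) separately and proving both implications for each---is exactly the paper's, and your (R1) argument is essentially identical to the paper's Lemma \ref{Dind}: the substantive implication is that global rigidity of $(G,p)$ forces global rigidity of $(G-e,p)$, and this holds because a direction-pure $M$-circuit minus any edge is still direction-rigid, hence globally direction-rigid by Whiteley (Lemma \ref{direction}), so the direction of $e$ is implied by the remaining constraints. Your worry about ``pinning down the sign'' is moot: equivalence only requires $q(u)-q(v)$ to be \emph{some} scalar multiple of $p(u)-p(v)$, so no sign needs to be controlled.

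For (R2) you take a genuinely different route, and it is here that there is a real gap. The paper's ``if'' direction substitutes $H$ by a minimally rigid graph with one length edge and invokes Servatius--Whiteley (Lemma \ref{special}) plus the substitution lemma; its ``only if'' direction realises a prescribed slope vector via Theorem \ref{realise1}, with transcendence-degree bookkeeping to certify that the slopes are generic. You instead solve directly for the positions of $V\sm V(H)$ from the direction constraints in $D^*=D\sm E(H)$, given the positions on $V(H)$. This is viable and in one respect cleaner (you are right that no genericity of the alternative realisation of $H$ is needed, since the coefficient matrix depends only on $p$), but your justification of solvability---``the trees are spanning, so every such vertex is connected to $H$''---is not the right reason. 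The system is a square linear system of $2|V\sm V(H)|$ equations whose coefficient matrix is Whiteley's frame matrix of $G/H$ with edge vectors normal to the $p$-directions and the contracted vertex's columns deleted; connectivity alone does not make it nonsingular. What is actually needed is (i) the Nash-Williams count $i_{G/H}(X)\le 2|X|-2$ coming from the two-spanning-trees hypothesis, so that by Theorem \ref{frame} the rows are independent for \emph{generic} edge vectors, and (ii) the fact that the actual slopes $\{s_p(e)\}_{e\in D^*}$ are algebraically independent---which is not automatic, but follows from $D^*$ being independent in $M(G)$, which in turn comes from the rank count $2|V|-2=r(G)\le |D^*|+r(H)$ and hence uses the rigidity of $G$ (the non-rigid case, where both sides of the equivalence fail, must be dispatched separately). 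Relatedly, your appeal to ``global direction-rigidity of $G/H$'' in the forward direction of (R2) is not literally meaningful, since $G/H$ has no canonical realisation (its edges attach to distinct vertices of $H$); the same linear-system argument is the correct replacement there as well.
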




We will obtain structural information about the family of direction
irreducible mixed graphs which are not redundantly rigid and use it
to prove our main result which, together with Theorem \ref{reduce}, gives a complete characterisation of mixed graphs with the property that all their generic realisations are globally rigid.

\begin{theorem}\label{redrigid}
Suppose $G$ is  a direction irreducible mixed graph with at least
two length edges. Then every generic realisation of $G$ is globally
rigid if and only if $G$ is $2$-connected, direction-balanced and
redundantly rigid.
\end{theorem}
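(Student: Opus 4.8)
The plan is to prove the two directions separately, with the necessity of the three conditions being the easier part and the sufficiency (assuming $G$ is direction irreducible, $2$-connected, direction-balanced and redundantly rigid, show every generic realisation is globally rigid) being the substantial part. For necessity, $2$-connectedness and direction-balancedness are already known to be necessary for generic global rigidity by the result of Jackson and Jord\'an quoted in the introduction, so it remains only to show that a direction irreducible mixed graph with at least two length edges that is not redundantly rigid has a non-globally-rigid generic realisation. Here I would argue contrapositively: if $G$ is rigid but not redundantly rigid, there is an edge $e$ with $G-e$ not rigid; if $e\in D$, the failure of reduction (R1) forces $e$ not to lie in a direction-pure circuit, and one uses the coning/extension constructions (à la Servatius--Whiteley and Jackson--Jord\'an) to build an equivalent non-congruent framework; if $e\in L$, then since there are at least two length edges, the structure of the rigidity matroid $M(G)$ together with direction irreducibility should again yield a flex. (If $G$ is not even rigid there is nothing to prove, since non-rigid frameworks are certainly not globally rigid.)

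For the sufficiency direction, the natural strategy is induction on $|V(G)|$ (or on the number of edges), using the hypothesised structural information about direction irreducible mixed graphs that are \emph{not} redundantly rigid — but here $G$ \emph{is} redundantly rigid, so the point is to leverage redundant rigidity plus direction irreducibility to either (i) reduce to Clinch's theorem (Theorem~\ref{thm:katie}) when $M(G)$ happens to be connected, or (ii) find a vertex or edge whose deletion/contraction preserves the three conditions and apply the inductive hypothesis, then glue back. The key tool will be an analysis of the $M$-components of $G$: a redundantly rigid mixed graph decomposes into $M$-connected pieces sharing vertices, and direction irreducibility should severely constrain how these pieces fit together — in particular ruling out the "two edge-disjoint spanning trees after contraction" configuration that (R2) would otherwise exploit. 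Along the way I expect to need the fact, established in Section~\ref{sec:dirred}, about when $G-e$ fails to be rigid, and a careful bookkeeping of direction edges to maintain direction-balancedness under the reduction.

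The main obstacle, I expect, is the inductive step in the sufficiency proof: showing that one can always find a \emph{legitimate} reduction — a vertex split, an edge contraction along a length edge, or a $1$-extension/$0$-extension reversal — that simultaneously (a) keeps the graph direction irreducible, (b) keeps it $2$-connected and direction-balanced, (c) keeps it redundantly rigid with at least two length edges, and (d) allows the global rigidity of the reduced framework to be "pushed forward" to $(G,p)$ via a rigidity/algebraic argument in the spirit of Connelly's and Jackson--Jord\'an's stress or averaging techniques. The gluing step — deducing global rigidity of $(G,p)$ from that of the smaller framework together with the presence of enough redundant constraints across the shared vertices — is where the direction-balanced hypothesis must be used in an essential way, since it is precisely the condition that prevents a $180^\circ$ rotation of one side relative to the other. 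I would handle the base cases (smallest direction irreducible, $2$-connected, direction-balanced, redundantly rigid mixed graphs with two length edges) by direct appeal to Theorem~\ref{thm:katie} once one checks that minimality plus direction irreducibility forces $M$-connectivity in those cases.
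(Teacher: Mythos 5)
There are two genuine gaps here, one in each direction of the equivalence.

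For sufficiency, you propose an induction on $|V(G)|$ with a reduction-and-glue step that you yourself identify as the main obstacle; the paper avoids this entirely. The key fact you are missing is that for a direction irreducible, $2$-connected, direction-balanced mixed graph, redundant rigidity is \emph{equivalent} to $M$-connectivity (Lemma~\ref{lem:redMcon} in the paper). This is proved by a short rank count over the $M$-components $H_i$: direction irreducibility forbids direction-pure components, $2$-connectivity and direction-balance force each $H_i$ to share at least $2$ (resp.\ $3$ if length-pure) vertices with the other components, and summing $r(H_i)$ then gives $r(G)\ge 2|V|$, a contradiction unless there is only one component. Once you have this, sufficiency is an immediate application of Theorem~\ref{thm:katie} with no induction, no base cases, and no gluing. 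Your closing remark that ``minimality plus direction irreducibility forces $M$-connectivity in those cases'' is the right idea, but it holds in full generality, not just for base cases, and that observation dissolves the entire inductive machinery you set up.

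For necessity, the substantial content is showing that a direction irreducible, non-redundantly-rigid $G$ with $|L|\ge 2$ has a non-globally-rigid generic realisation, and your sketch does not contain a workable argument for this. First, the case split on $e\in L$ versus $e\in D$ is resolved immediately by Theorem~\ref{hend}(a): if $(G,p)$ were globally rigid with $|L|\ge 2$ then every length edge is already redundant, so the non-redundant edge must be a direction edge; ``coning/extension constructions'' are not the relevant tool. The paper's actual argument (Theorem~\ref{thm:ExistsNonGloballyRigidRealisation}) first extracts structural information via Lemma~\ref{dired} (assuming global rigidity for contradiction: $G-e$ is bounded with $r(G-e)=r(G)-1$ for every $e\in D$, and every length edge lies in a length-pure $M$-circuit), then reflects $(G,p)$ in the $x$-axis and corrects the direction constraints $d_0,\dots,d_k$ one at a time: deleting $d_j$ leaves a bounded non-rigid framework whose framework-space component is a circle (Lemma~\ref{deletedirection}), and moving around that circle from $q_j$ to $-q_j$ passes through a point where the slope of $d_j$ returns to its value in $p$. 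Throughout, a fixed length-pure $M$-component $G_1$ only rotates, so the final realisation restricted to $V(G_1)$ is a reflection of $p|_{V(G_1)}$ and hence non-congruent to it, giving the contradiction. This reflection-and-correction mechanism, together with the quasi-genericity bookkeeping needed to reapply Lemma~\ref{deletedirection} at each step, is the heart of the proof and is absent from your proposal.
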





The organisation of this paper by section is
 \ref{sec:intro}: Introduction,
\ref{sec:prelim}: Preliminaries,
\ref{sec:realise}: Realisations of graphs with given direction constraints,
\ref{sec:dirred}: Direction reduction,
\ref{sec:irreducible}: Direction irreducible graphs,
\ref{sec:mainresult}: Proof of Theorem \ref{redrigid},
\ref{sec:con}: Algorithmic considerations,
\ref{sec:close}: Closing Remarks.


\section{Preliminaries}\label{sec:prelim}

In this section we collect tools from diverse areas that we will use in our proofs.

\subsection{Rigidity}\label{sec:rigid}

Suppose $(G,p)$ is a 2-dimensional direction-length framework. Its
{\em rigidity matrix} is a $(|D|+|L|)\times 2|V|$ matrix $R(G,p)$,
where each edge in $D \cup L$ corresponds to a row and each vertex
in $V$ corresponds to a pair of consecutive columns. We choose an
arbitrary reference orientation for the edges, and use the notation
$e=uv$ to mean that $e$ has been oriented from $u$ to $v$. Fix an
edge $e$, a vertex $x$, and write $p(u)-p(v)=(a,b)$. Then the two
entries in the rigidity matrix corresponding to $e$ and $x$ are as
follows. If $e \in L$ we take $(a,b)$ if $x=u$, $(-a,-b)$ if $x=v$,
$(0,0)$ otherwise. If $e \in D$ we take $(b,-a)$ if $x=u$, $(-b,a)$
if $x=v$, $(0,0)$ otherwise.

We refer to vectors in the null space $Z(G,p)$ of $R(G,p)$ as {\em
infinitesimal motions}. The labeling of the columns of $R(G,p)$
allows us to consider each infinitesimal motion $z$ as a map from
$V$ to $\mb{R}^2$, with the properties that $z(u)-z(v)$ is
perpendicular to $p(u)-p(v)$ if $e=uv \in L$, or parallel to
$p(u)-p(v)$ if $e=uv \in D$.
For any $t \in \mb{R}^2$ the translation
given by $z(v)=t$ for all $v \in V$ is an infinitesimal motion, so
$\dim Z(G,p)\geq 2$ and $\rank R(G,p)\leq 2|V|-2$. We can `factor
out' translations by restricting attention to realisations $(G,p)$ that
are in {\em standard position}, i.e.\ satisfy $p(v_0)=(0,0)$ for
some fixed $v_0 \in V$. Write $R(G,p)_{v_0}$ for the matrix obtained
from $R(G,p)$ by deleting the $2$ columns corresponding to $v_0$ and
let $Z(G,p)_{v_0}$ be its null space. Then $Z(G,p)_{v_0}$ is
isomorphic to the subspace $Z(G,p)_{v_0}^*$ of $Z(G,p)$ consisting
of all infinitesimal motions which fix $v_0$. Since all non-zero
translations belong to $Z(G,p) \sm Z(G,p)_{v_0}^*$ we have $\dim
Z(G,p)_{v_0} = \dim Z(G,p) - 2$, so $\rank R(G,p)_{v_0} = 2|V|-2 -
\dim Z(G,p)_{v_0} = 2|V|-\dim Z(G,p) = \rank R(G,p)$. We say that
the framework $(G,p)$ is {\em infinitesimally rigid} if $\rank
R(G,p)=2|V|-2$, and is {\em independent} if the rows of $R(G,p)$ are
linearly independent.

A property P of frameworks is {\em generic} if whenever some generic
realisation of a graph $G$ has property P then all generic
realisations of $G$ have property P. If P is a generic property then
we say that a graph $G$ has property P if some generic realisation
of $G$ has property P (or equivalently all generic realisations of
$G$ have property P). Infinitesimal rigidity and independence are
both generic properties, as the rank of $R(G,p)$ is the same for all
generic realisations of $G$. Results from \cite{JJ2,JK1}, which will
be described in Section \ref{sec:dg}, imply that infinitesimal
rigidity and rigidity are equivalent properties for generic
direction-length frameworks. Thus rigidity and redundant rigidity
are also generic properties.

The rigidity matrix of $(G,p)$ defines the {\em rigidity matroid} of
$(G,p)$: the ground set $D\cup L$ corresponds to rows of the
rigidity matrix, and a subset is independent when the corresponding
rows are linearly independent. Any two generic realisations of $G$
have the same rigidity matroid, which we call the (2-dimensional)
{\em rigidity matroid} $M(G)$ of $G$. (We refer the reader to
\cite{O} for an introduction to the theory of matroids.)

Servatius and Whiteley \cite{SW} characterised independence in the
rigidity matroid of a mixed graph $G=(V;D,L)$: a set of edges
$F\subseteq D\cup L$  is independent in $M(G)$ if and only if for
all $\es \neq F' \subseteq F$ we have $|F'| \le 2|V(F')|-2$, with
strict inequality if $F'$ is length or direction-pure. This implies
that $F$ is a circuit of $M(G)$ if and only if $F-e$ is independent
for all $e\in F$ and either $F$ is mixed with $|F|=2|V(F)|-1$, or $F$
is pure with $|F|=2|V(F)|-2$.

Servatius and Whiteley also gave the following recursive
construction for independent rigid mixed graphs, i.e.\ bases in the
rigidity matroid of the `complete mixed graph'. A {\em
$0$-extension} of $G$ is a mixed graph obtained from $G$ by adding a
new vertex $v$ and two edges at $v$, either of which may be a length
edge or a direction edge, and which may go to the same vertex of $G$
if they consist of one length edge and one direction edge. A {\em
$1$-extension} of $G$ is a mixed graph obtained from $G$ by adding a
new vertex $v$, deleting an edge $e$ of $G$, and adding three edges
at $v$, such that the neighbours of $v$ include both endpoints of
$e$, neither $D$ nor $L$ decrease in size,
and two new edges may go to the same vertex if they are of different types.
They showed that
$0$-extensions and $1$-extensions preserve independence and
rigidity, and conversely, any independent rigid mixed graph can be
constructed starting from a single vertex by a sequence of
$0$-extensions and $1$-extensions.

\subsection{$M$-circuits and $M$-components}

It is well known that a matroid can be expressed as the direct sum
of its connected components, which are the equivalence classes of
the relation $\sim$, where $e \sim f$ if $e=f$ or there is a circuit
containing $e$ and $f$. We define the {\em $M$-components} of a
mixed graph $G=(V;D,L)$ to be the subgraphs induced by the edges in
the connected components of its rigidity matroid $M(G)$. Similarly,
we define the {\em $M$-circuits} of $G$ to be the subgraphs induced
by the edges of the circuits of $M(G)$.
We can use the direct sum decomposition of the rigidity matroid
$M(G)$ to calculate its rank, which we will denote by $r(G)$.
Indeed, if $G$ has $M$-components $H_1,\dots,H_m$ then we have $r(G)
= \sum_{i=1}^m r(H_i)$, where $r(H_i)$ is  $2|V(H_i)|-3$ when $H_i$
is pure
and is $2|V(H_i)|-2$ otherwise.
We can use this fact to show that $M$-connectivity is equivalent to
redundant rigidity when $G$ is direction irreducible and satisfies
the necessary conditions for generic global rigidity described in
Section \ref{sec:intro}.

\begin{lemma}\label{lem:redMcon}
Suppose $G$ is a direction irreducible, $2$-connected, direction-balanced mixed graph. Then $G$ is $M$-connected if and only if $G$ is redundantly rigid.
\end{lemma}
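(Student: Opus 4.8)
The plan is to prove both directions, with the forward direction (redundant rigidity $\Rightarrow$ $M$-connected) being the substantive one. For the easy direction, suppose $G$ is $M$-connected. Then $M(G)$ is a connected matroid, so every edge lies in an $M$-circuit; since $G$ is also rigid (being $M$-connected on a graph with $\geq 2$ vertices forces rank $2|V|-2$, once we know $G$ is not a single pure component of deficient rank — here $G$ being direction-balanced and $2$-connected rules out the pure case or can be checked directly), we conclude $G$ is redundantly rigid. I would spell out the small verification that a connected rigidity matroid on a mixed graph has rank exactly $2|V|-2$: a connected matroid has no coloops, so no edge is a coloop of $M(G)$, and combined with the rank formula $r(G)=\sum r(H_i)$ over $M$-components this pins down the rank once there is a single $M$-component.

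For the forward direction, assume $G$ is redundantly rigid but, for contradiction, not $M$-connected. Then $G$ has $M$-components $H_1,\dots,H_m$ with $m\geq 2$, each $H_i$ rigid (redundant rigidity forces every edge into a circuit, so no $H_i$ is a bridge/single edge, hence each $H_i$ is a rigid $M$-component, i.e.\ $2$-connected as a rigidity structure in the relevant sense) and in fact each $H_i$ is redundantly rigid on its own vertex set. The rank formula gives $r(G)=\sum_i r(H_i)$ with $r(H_i)=2|V(H_i)|-2$ if $H_i$ is mixed and $2|V(H_i)|-3$ if $H_i$ is pure; since $G$ is rigid, $r(G)=2|V|-2$, which combined with $\sum_i |V(H_i)| \geq |V| + (m-1)$ when the components overlap forces tight control on how the $V(H_i)$ intersect — essentially the $M$-components meet pairwise in at most one vertex and the intersection pattern is tree-like, and at most one component can be pure. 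This is the standard way deficiency counting interacts with the component decomposition.

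The heart of the argument is then to extract, from a separating configuration of $M$-components, either a $2$-separation, a violation of direction-balance, or a direction reduction, contradicting the hypotheses on $G$. Concretely: if two $M$-components $H_i,H_j$ share exactly one vertex $v$ and their union is a large piece of $G$, one analyses the cut vertex $v$ to contradict $2$-connectivity; if the components glue along vertices in a way that isolates a length-pure or direction-pure piece, one gets a contradiction with direction-balance (the pure component contributes no direction edge to one side of a $2$-separation) or, in the direction-pure case, an (R1)/(R2) direction reduction — a direction-pure $M$-component is exactly a direction-pure circuit or a union of two spanning trees on its vertex set after contraction, which is what (R1) and (R2) forbid. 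I expect the main obstacle to be the bookkeeping in this last step: carefully enumerating the ways $m\geq 2$ rigid $M$-components (at most one pure) can assemble into a $2$-connected, direction-balanced, direction-irreducible graph and showing every case is impossible. One must handle separately the sub-cases according to whether a shared vertex set has size one or two, whether a component is mixed or pure, and whether the "small side" of a separation is contractible in the sense of (R2); I would organise this by first reducing to the case $m=2$ (peeling off components one at a time, using that a union of a sub-collection of $M$-components is again rigid and that redundant rigidity passes to it), and then dispatching the $m=2$ case by the separation analysis above.
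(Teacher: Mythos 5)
Your setup is the right one---the $M$-component decomposition $H_1,\dots,H_m$, the rank formula $r(G)=\sum_i r(H_i)$ with $r(H_i)=2|V(H_i)|-2$ for mixed and $2|V(H_i)|-3$ for pure components, and the observation that direction-irreducibility (via (R1)) forbids direction-pure components---but the heart of the forward implication is not actually proved, and the organisational device you propose for it fails. Concretely: (i) the reduction to $m=2$ by ``peeling off components one at a time'' does not work, because a union of a sub-collection of $M$-components need not be rigid (two vertex-disjoint mixed $M$-components have rank sum $2|V_1|+2|V_2|-4<2|V_1\cup V_2|-2$), and redundant rigidity of $G$ does not pass to such a union, so the inductive peeling has no hypothesis to stand on; (ii) several structural assertions that carry real weight are left unproved and are not all correct as stated---each $H_i$ is \emph{not} rigid when it is pure (its rank is only $2|V(H_i)|-3$), and ``at most one component can be pure'' and ``the intersection pattern is tree-like'' are asserted, not derived; (iii) the final step (``either a $2$-separation, a violation of direction-balance, or a direction reduction'') is a plan for a case analysis rather than an argument, and you yourself identify its bookkeeping as the unresolved obstacle. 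As written, the substantive direction is a gap.

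The missing idea is that no case analysis is needed: a single global count finishes the proof, and this is what the paper does. For each $M$-component $H_i$ let $X_i$ be its private vertices and $Y_i=V(H_i)\setminus X_i$ those it shares with other components. Redundant rigidity gives $|V(H_i)|\ge 3$ (no trivial components); $2$-connectivity gives $|Y_i|\ge 2$; direction-balance gives $|Y_i|\ge 3$ when $H_i$ is length-pure; and (R1)-irreducibility rules out direction-pure components. Hence $r(H_i)=2|X_i|+2|Y_i|-3\ge 2|X_i|+|Y_i|$ in the length-pure case (using $|Y_i|\ge 3$) and $r(H_i)=2|X_i|+2|Y_i|-2\ge 2|X_i|+|Y_i|$ in the mixed case (using $|Y_i|\ge 2$). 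Summing over $i$, and using that the $X_i$ are pairwise disjoint while every vertex of $\bigcup_i Y_i$ is counted at least twice in $\sum_i |Y_i|$, yields $r(G)=\sum_i r(H_i)\ge 2\left|\bigcup_i X_i\right|+2\left|\bigcup_i Y_i\right|=2|V|$, contradicting $r(G)\le 2|V|-2$. This replaces your entire separation analysis; the three hypotheses enter only through the three lower bounds on $|V(H_i)|$ and $|Y_i|$.
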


\bproof 
We have already seen, in Section \ref{sec:intro} that redundant rigidity is a necessary condition for $M$-connectedness. To prove sufficiency we
suppose that $G$ is redundantly rigid but not $M$-connected. Let  $H_1,H_2,\ldots ,H_m$ be
the $M$-components of $G$.  Let $V_i=V(H_i)$, $X_i=V_i -
\bigcup_{j\neq i} V_j$ and $Y_i=V_i-X_i$ for all $1\leq i\leq m$.
Since $G$ is redundantly rigid,  every edge of $G$ is contained in
some $M$-circuit. Hence  $|V_i|\geq 3$ for all $1\leq i\leq m$.
Since $G$ is 2-connected,  $|Y_i|\geq 2$ for
all $1\leq i\leq m$, and since $G$ is direction-balanced, $|Y_i|\geq 3$ when $H_i$ is
length-pure. Since $G$ is direction irreducible, no direction edge
of $G$ is contained in a direction-pure $M$-circuit. This implies
that each of the $M$-connected components is either mixed or
length-pure. Without loss of generality, we may assume that
$H_1, H_2, \ldots, H_\ell$ are length-pure for some $0\leq \ell \leq
t$, and $H_{\ell+1},H_{\ell+2},\ldots, H_m$ are mixed. Then
\begin{align*}
r(G) & = \sum_{i=1}^{\ell} (2|V_i|-3) + \sum_{i=\ell+1}^{m}(2|V_i|-2)\\
     & = \sum_{i=1}^{\ell} (2|X_i|+2|Y_i|-3) + \sum_{i=\ell+1}^{m}(2|X_i|+2|Y_i|-2)\\
     &\geq \sum_{i=1}^{m} (2|X_i|+|Y_i|),
\end{align*}
since $|Y_i|\geq 2$ for all $1\leq i\leq m$, with strict inequality
when $1\leq i\leq \ell$. Since the $X_i$ are all disjoint, we have
$\sum_{i=1}^{m}|X_i|=\left\vert \bigcup_{i=1}^{m} X_i \right\vert$.
Also, since each element of $Y_i$ is contained in at least one other
$Y_j$ with $j\neq i$, we have $\sum_{i=1}^{m}|Y_i|\geq
2|\bigcup_{i=1}^{t} Y_i|$. Thus
\[
r(G) \geq 2\left(\left\vert \bigcup_{i=1}^{m} X_i \right\vert
+\left\vert\bigcup_{i=1}^{m} Y_i \right\vert \right)
    = 2|V|.
\]
This contradicts the fact that $r(G)\leq 2|V|-2$.
\eproof

\subsection{Boundedness and global rigidity}\label{sec:bounded}

Now we recall some results from \cite{JK1,JK2}. A direction-length
framework $(G,p)$ is {\em bounded} if there exists a real number $K$
such that $\|q(u)-q(v)\|<K$ for all $u,v\in V$ whenever $(G,q)$ is a
framework equivalent to $(G,p)$. Our first result shows that the boundedness of $(G,p)$ is
equivalent to the rigidity of an augmented framework.

\begin{lemma}\label{augment}\cite[Theorem 5.1]{JK1}  Let $(G,p)$ be a direction-length framework and let $G^+$ be obtained from $G$ by adding a direction edge parallel to each length
edge of $G$.
Then  $(G,p)$ is bounded if and only if $(G^+,p)$ is rigid.
\end{lemma}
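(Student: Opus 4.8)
The plan is to prove both directions by contrapositive, working with frameworks in standard position so that translations are factored out. For the forward direction, suppose $(G^+,p)$ is not rigid; since infinitesimal rigidity and rigidity coincide for generic frameworks, $\rank R(G^+,p) < 2|V|-2$, so there is an infinitesimal motion $z$ of $(G^+,p)$ that is not a translation (i.e.\ $z \notin Z(G,p)_{v_0}^*$ is non-trivial after fixing $v_0$). The key observation is that, for a direction edge $e=uv$, the infinitesimal motion condition says $z(u)-z(v)$ is \emph{parallel} to $p(u)-p(v)$; this is exactly the first-order version of the statement that one may slide $u$ and $v$ apart along the line through them while preserving the direction. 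So for the length edge $uv$ of $G$ (which in $G^+$ has a parallel direction edge), the motion $z$ lengthens or shortens $\|p(u)-p(v)\|$ at first order unless $z(u)-z(v)=0$. I would use $z$ to build an actual one-parameter family $q_t$ of configurations equivalent to $(G,p)$: integrate the direction constraints (each connected ``direction-rigid block'' of $G$ can only be translated or dilated), obtaining a family in which the configurations grow without bound, so $(G,p)$ is unbounded. More carefully, one shows that the set of $q$ equivalent to $(G,p)$ modulo translation is (after suitable normalisation) an algebraic variety, and non-rigidity of $G^+$ forces it to be positive-dimensional and unbounded along the scaling directions permitted by the direction edges.

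For the converse, suppose $(G^+,p)$ is rigid. Then $(G^+,q)$ is infinitesimally rigid for $q$ in a neighbourhood of $p$, and in fact rigidity of $G^+$ means that any $q$ satisfying all the constraints of $G^+$ --- in particular the direction constraints $p(u)-p(v) \parallel q(u)-q(v)$ for every original length or direction edge, \emph{together with} the length constraints --- is congruent to $p$. The point is that if $(G,q)$ is equivalent to $(G,p)$ then $q$ automatically satisfies the direction constraints of $G^+$ as well (a length edge $uv$ with $\|p(u)-p(v)\|=\|q(u)-q(v)\|$ need not have $p(u)-p(v)\parallel q(u)-q(v)$, so this needs care --- the right statement is that \emph{boundedness} is what fails, and one argues that an unbounded family must, in the limit, produce a non-trivial infinitesimal flex of $G^+$). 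Concretely: if $(G,p)$ is unbounded, take a sequence $(G,q_n)$ equivalent to $(G,p)$ with $\max_{u,v}\|q_n(u)-q_n(v)\| \to \infty$; rescale by $\lambda_n = 1/\max\|q_n(u)-q_n(v)\|$ and pass to a convergent subsequence $\lambda_n q_n \to q_\infty$ (in standard position). The rescaled configurations satisfy the direction constraints exactly (direction constraints are scale-invariant) and the length constraints become $\|\lambda_n q_n(u) - \lambda_n q_n(v)\| = \lambda_n \|p(u)-p(v)\| \to 0$, so in the limit $q_\infty(u)=q_\infty(v)$ for every length edge $uv$; hence $q_\infty$ satisfies all constraints of a framework equivalent to $(G^+, p)$ in the degenerate sense that $q_\infty$ is a non-constant map (it has diameter $1$ by construction) respecting every direction edge and collapsing every length edge --- this yields a non-trivial infinitesimal motion of $(G^+,p)$ (take $z = q_\infty$, which lies in $Z(G^+,p)_{v_0}$ and is not a translation since it is non-constant yet not injective on the relevant edges), contradicting $\rank R(G^+,p)=2|V|-2$.

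The main obstacle is the limiting/compactness argument in the converse and making precise the claim that an unbounded equivalent family degenerates to an infinitesimal flex: one must check that the limit $q_\infty$ is genuinely a non-trivial element of the null space of $R(G^+,p)$ (non-trivial meaning not a translation), that the rescaling does not make everything collapse to a point, and that the linear direction-constraint equations pass to the limit correctly. The forward direction's integration of the infinitesimal flex into a genuine unbounded motion is also delicate but is precisely the kind of argument handled in \cite{JK1}; since we are allowed to cite \cite[Theorem 5.1]{JK1}, the cleanest route is simply to quote it, but the above sketch is how I would reconstruct the proof.
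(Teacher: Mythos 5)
The paper does not prove this lemma: it is quoted verbatim from \cite[Theorem 5.1]{JK1} and used as a black box, so there is no in-paper argument to compare yours against. Your closing remark that the cleanest route is simply to cite that theorem is exactly what the authors do.

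As a reconstruction, your sketch has two concrete gaps. First, the direction ``$(G^+,p)$ not rigid $\Rightarrow$ $(G,p)$ unbounded'' is where the real work lies, and ``integrate the flex'' does not do it: a non-trivial infinitesimal motion need not integrate to any genuine motion, and even a genuine flex need not be unbounded. The assertion that each direction-rigid block can only translate or dilate, so the family grows without bound, is precisely the content that has to be proved; in \cite{JK1} it is established via the combinatorial characterisation of boundedness (two edge-disjoint spanning trees in $G/L$, which appears here as Lemma \ref{bounded}), not by integrating an infinitesimal flex. Relatedly, your description of the flex on a length edge is off: in $G^+$ the edge $uv$ carries both a length constraint and a parallel direction constraint, so any infinitesimal motion $z$ of $(G^+,p)$ has $z(u)-z(v)$ simultaneously perpendicular and parallel to $p(u)-p(v)$, forcing $z(u)=z(v)$ --- nothing is ``lengthened at first order.'' Second, your rescaling/compactness argument for the converse is sound in outline (the limit $q_\infty$ does lie in the null space of $R(G^+,p)$ and is non-constant, hence not a translation), but it only shows that $(G^+,p)$ fails to be \emph{infinitesimally} rigid. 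Since the lemma is stated for an arbitrary framework, not a generic one, failure of infinitesimal rigidity does not immediately yield failure of rigidity, so an extra step is needed there as well.
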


Lemma \ref{augment} implies that
boundedness is a generic property, and we say that a mixed graph $G$ is {\em bounded} if some, or equivalently every,
generic realisation of $G$ is bounded.
It also implies that every rigid mixed graph is bounded.

A mixed graph $G=(V;D,L)$ is {\em
direction-independent} if $D$ is independent in the direction-length
rigidity matroid of $G$, i.e.\ the rows of $R(G,p)$ corresponding to
$D$ are linearly independent for any generic $p$. The facts that
direction-pure $M$-circuits are direction-rigid
and that
direction-rigidity and global direction-rigidity are equivalent for direction-pure frameworks, allow us
to reduce the problem of deciding if a mixed graph is bounded to the
family of direction-independent mixed graphs.  The following
characterisation of boundedness for direction-independent mixed  graphs follows from \cite[Theorem 5.1 and Corollary 4.3]{JK1}.

\begin{lemma}\label{bounded}
Suppose that $G=(V;D,L)$ is a direction-independent mixed graph. Then $G$ is bounded if
and only if $G/L$ has two edge-disjoint spanning trees (where $G/L$
is the graph obtained from $G$ by contracting each edge in $L$ and keeping all multiple copies of direction edges created by this contraction).
\end{lemma}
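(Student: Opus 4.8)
The plan is to derive the statement by combining Lemma~\ref{augment} with a direct analysis of the rigidity matroid of the direction‑augmented graph $G^+$, which is where the cited \cite[Corollary 4.3]{JK1} enters. By Lemma~\ref{augment}, $G$ is bounded if and only if $G^+$ is rigid, i.e.\ $r(G^+)=2|V|-2$; so it suffices to show that $r(G^+)=2|V|-2$ exactly when $G/L$ has two edge‑disjoint spanning trees. Recall that for each length edge $uv\in L$ the graph $G^+$ contains both $uv$ and a parallel direction edge on $\{u,v\}$.

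First I would reduce the rigidity of $(G^+,p)$ (for a fixed generic $p$, which is legitimate since boundedness is a generic property) to a question about a contracted direction matrix. If $z\in Z(G^+,p)$ and $uv\in L$, then the length edge forces $z(u)-z(v)\perp p(u)-p(v)$ while its parallel copy forces $z(u)-z(v)\parallel p(u)-p(v)$; since $p$ is generic $p(u)\neq p(v)$, so $z(u)=z(v)$, and hence $z$ is constant on each connected component of $(V,L)$. Conversely, any $z$ that is constant on these components satisfies every constraint coming from $L$ and from the parallel copies. Writing $c=|V(G/L)|$ for the number of components of $(V,L)$, it follows that $Z(G^+,p)$ is naturally isomorphic to the null space of the $|D|\times 2c$ matrix $N$ obtained by restricting the direction‑edge rows of $R(G^+,p)$ to the $2c$‑dimensional space of maps constant on the components of $(V,L)$: the row of $N$ for $uv\in D$ with $p(u)-p(v)=(a,b)$ has entry $(b,-a)$ in the two columns of the component containing $u$, entry $(-b,a)$ in the two columns of the component containing $v$, and zeros elsewhere (a direction edge with both ends in one component contributes a zero row, matching a loop of $G/L$). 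Therefore $r(G^+)=2|V|-\dim Z(G^+,p)=2(|V|-c)+\rank N$, and since the two‑dimensional space of translations always lies in the null space of $N$ we have $\rank N\le 2c-2$; hence $G^+$ is rigid if and only if $\rank N=2c-2=2|V(G/L)|-2$.

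It then remains to prove that, for generic $p$, $\rank N=2|V(G/L)|-2$ if and only if $G/L$ has two edge‑disjoint spanning trees. The matrix $N$ is exactly the rigidity matrix of the direction framework on the multigraph $G/L$ in which the edge arising from $uv\in D$ is drawn in the direction $p(u)-p(v)$, and for generic $p$ this is the situation treated by \cite[Corollary 4.3]{JK1}: its rank equals the rank of the edge set of $G/L$ in the count matroid whose independent sets are those edge sets $F$ with $|F'|\le 2|V(F')|-2$ for every nonempty $F'\subseteq F$. By Nash–Williams' theorem this count matroid is the union of two copies of the cycle matroid of $G/L$, so its rank is $2|V(G/L)|-2$ precisely when $G/L$ contains two edge‑disjoint spanning trees. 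Combining this with the reduction above and Lemma~\ref{augment} gives the lemma.

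The step I expect to be the main obstacle is the last one: one must justify that the directions $p(u)-p(v)$ that the edges of $G/L$ inherit from a generic configuration $p$ on $V$ are generic enough for \cite[Corollary 4.3]{JK1} to apply. These directions are not algebraically independent — three direction edges forming a triangle on a fixed vertex triple obey a collinearity relation, and a component of $(V,L)$ that meets the direction edges at several vertices can force still more relations — so one has to check that contracting the length edges introduces no degeneracy that drops $\rank N$ below its generic value. The key point making this work is that an $L$‑component incident with direction edges at two or more distinct vertices no longer behaves like a single point, and the only relations that survive are those already recorded by the count matroid of $G/L$; this is what turns the governing sparsity condition from the $(2,3)$‑count of ordinary direction rigidity (which always leaves a dilation in the kernel) into the $(2,2)$‑count equivalent to the existence of two edge‑disjoint spanning trees.
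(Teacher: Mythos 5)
Your first two steps are correct and cleanly done: the reduction via Lemma~\ref{augment} to the rigidity of $G^+$, the observation that a length edge together with its parallel direction copy forces $z(u)=z(v)$ for any infinitesimal motion $z$ of a generic $(G^+,p)$, and the resulting identity $r(G^+)=2(|V|-c)+\rank N$ with $N$ the inherited frame matrix on $G/L$. Note, though, that the paper offers no proof of this lemma at all --- it simply records that the statement follows from Theorem~5.1 and Corollary~4.3 of \cite{JK1} --- so you are reconstructing the argument of \cite{JK1} rather than paralleling anything in this paper.

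The reconstruction has a genuine gap exactly where you say you expect one, and it is not a technicality that can be waved through. The assertion that $\rank N$ equals the rank of $E(G/L)$ in the count matroid $M_2(G/L)$ cannot follow from Theorem~\ref{frame} (Whiteley's generic frame theorem) because, as you note, the inherited directions $p(u)-p(v)$ satisfy algebraic relations (e.g.\ around any cycle of direction edges), so the frame is not generic; and the heuristic you offer in its place is not an argument. The clearest sign that something essential is missing is that your proof never uses the hypothesis that $G$ is direction-independent, yet the statement is false without it: for a direction-pure $K_4$ one has $r_{M_2}(G/L)=r_{M_2}(K_4)=6=2|V|-2$ (two edge-disjoint spanning trees exist), while $\rank N\le 2|V|-3=5$ since $N$ is then an ordinary direction rigidity matrix whose kernel contains the dilation; and indeed $K_4$ is unbounded. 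Direction-independence is precisely what excludes subsets $F'\subseteq D$ with $|F'|\ge 2|V(F')|-2$ on the \emph{original} vertex set, whose surviving dilation motions would depress $\rank N$ below the $M_2$-rank, and the proof that this exclusion suffices (i.e.\ that no other degeneracies arise from contracting the $L$-components) is the substantive content of \cite[Corollary~4.3]{JK1}. Until that step is supplied --- or honestly delegated to the citation, which is all the present paper does --- the proof is incomplete.
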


A {\em bounded component} of $G$ is a maximal bounded subgraph of $G$.
It is shown in \cite{JK1} that each edge $e\in L$ lies in a bounded component
and that the vertex sets of the bounded components partition $V$.
The following lemma is implicit in \cite{JK1};
for completeness we include a short proof. We will need the well known
result of Nash-Williams \cite{NW} that the edge set of a graph $H$ can be covered by $k$ forests if and only if
every non-empty set $X$ of vertices of $H$ induces at most $k|X|-k$ edges of $H$.


\begin{lemma}\label{boundedcomps}
Suppose $G=(V;D,L)$ is direction-independent and $\scrs$ is a set of bounded
components of $G$ with $|\scrs|\geq 2$. Then there are at most
$2|\scrs|-3$ edges of $G$ joining distinct components in $\scrs$.
\end{lemma}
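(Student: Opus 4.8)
The plan is to contract each bounded component of $G$ to a single vertex and apply the Nash-Williams covering result to the resulting multigraph. First I would set $\scrs=\{B_1,\dots,B_k\}$ with $k=|\scrs|\geq 2$, and let $G'$ be the mixed graph obtained from $G$ by deleting all edges that do not join two distinct components in $\scrs$ and then contracting each $B_i$ to a single vertex $b_i$ (keeping multiplicities), so that $V(G')=\{b_1,\dots,b_k\}$ and $E(G')$ is exactly the set of edges of $G$ joining distinct components in $\scrs$, whose size we must bound by $2k-3$. The key point is that every edge of $G'$ is a direction edge: since each length edge of $G$ lies in a bounded component (as recalled from \cite{JK1}), no length edge joins two distinct bounded components, so $E(G')\subseteq D$. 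Because $D$ is independent in the rigidity matroid of $G$, the restriction of $D$ to the edges of $G'$ is independent in the rigidity matroid of $G'$ as well; by the Servatius–Whiteley count a direction-pure independent set on a vertex set of size $k$ has at most $2k-3$ edges, which already gives the bound. (Alternatively, a direction-independent mixed graph has $G/L$ with no subgraph denser than two spanning trees by Lemma \ref{bounded}, and $G'$ is a subgraph of a suitable such contraction, so $|E(G')|\le 2k-2$; the finer direction-pure count saves the extra edge.)

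To make this self-contained without invoking the full Servatius–Whiteley theorem, I would instead argue directly: I claim the edge set of $G'$ can be covered by two forests, whence by Nash-Williams $G'$ contains no vertex subset $X$ inducing more than $2|X|-2$ edges, and in particular $|E(G')|\leq 2k-2$. To improve $2k-2$ to $2k-3$, suppose for contradiction $|E(G')|=2k-2$; then $G'$ itself is the edge-disjoint union of two spanning trees, so (identifying a vertex of each $B_i$ and re-expanding) the graph $G/L$ restricted to $V(B_1)\cup\dots\cup V(B_k)$, with all length edges inside the components contracted, contains two edge-disjoint spanning trees on those $k$ contracted vertices plus the internal structure, forcing $B_1\cup\dots\cup B_k$ to be bounded by Lemma \ref{bounded} — contradicting the maximality of the bounded components whenever $k\geq 2$. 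This contradiction yields $|E(G')|\le 2k-3$, which is the assertion.

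The main obstacle, and the step requiring the most care, is the last one: showing that $|E(G')|=2k-2$ forces a larger bounded subgraph. One must check that the edge-disjoint spanning tree structure on the contracted graph $G'$, combined with the (already bounded, hence by Lemma \ref{bounded} sufficiently tree-rich after contracting internal length edges) structure of each $B_i$, really does certify boundedness of $\bigcup_i B_i$ via Lemma \ref{bounded}; this amounts to verifying that contracting $L$ in $\bigcup_i B_i$ yields a graph with two edge-disjoint spanning trees, by splicing the two spanning trees of $G'$ together with the two spanning trees (on $(G/L)|_{B_i}$) guaranteed inside each $B_i$. One also has to confirm direction-independence is inherited by $\bigcup_i B_i$, which is immediate since $D$ is independent in $M(G)$ and independence is preserved under taking subgraphs. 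The rest of the argument is a routine application of Nash-Williams and the counting bounds already available in the paper.
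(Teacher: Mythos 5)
Your overall strategy --- contract each component of $\scrs$, observe that the connecting edges are all direction edges, apply Nash--Williams, and then splice two spanning trees of the contracted graph together with the two spanning trees guaranteed inside each $C_i/L_i$ by Lemma \ref{bounded} to contradict the maximality of the bounded components --- is exactly the route the paper takes. But there is a genuine gap at the step you yourself flag as a ``claim'': you assert without proof that the edge set of the contracted graph $G'$ can be covered by two forests (equivalently, that every nonempty $X\subseteq V(G')$ induces at most $2|X|-2$ edges). That sparsity condition is essentially the statement being proved, so it cannot be assumed. Your fallback justification via Servatius--Whiteley does not work either: independence of $D$ in $M(G)$ bounds a direction-pure independent set $F$ by $2|V(F)|-3$ where $V(F)$ is its set of endpoints \emph{in $G$}, not the set of $k$ contracted vertices, and independence in the rigidity matroid is not preserved when you pass to the contracted graph; so neither $2k-3$ nor even $2k-2$ follows from direction-independence alone.

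The paper closes this gap with a minimal-counterexample argument: assume some $\scrs$ with $|\scrs|\geq 2$ has at least $2|\scrs|-2$ connecting edges, and choose $\scrs$ minimal with this property. Minimality forces every subset $X$ of the contracted vertex set with $2\leq |X|<|\scrs|$ to induce at most $2|X|-3$ connecting edges, so a graph $H$ on $\scrs$ consisting of exactly $2|\scrs|-2$ of the connecting edges satisfies the Nash--Williams condition and therefore decomposes into two edge-disjoint spanning trees; your splicing argument then applies verbatim and yields the contradiction. You have all the ingredients --- you just need to run your ``$|E(G')|=2k-2$ forces boundedness'' contradiction on a minimal violating subset rather than asserting the two-forest cover up front.
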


\bproof Suppose on the contrary that  there are
at least $2|\scrs|-2$ edges of $G$ that join distinct components
in $\scrs$. Suppose also that $\scrs$ is minimal with respect to this property (and the condition that $|\scrs|\geq 2$).
Let $G'=(V';D',L')$ be the subgraph of $G$ spanned by $\cup_{C_i \in \scrs} C_i$.
Let $H$ be a graph with vertex set  $\scrs$ and exactly $2|\scrs|-2$ edges,
each of which correspond to a distinct edge of $G$ joining two
components in $\scrs$.
The minimality of $\scrs$ implies that every non-empty set $X$ of vertices of $H$ induces at most $2|X|-2$ edges of $H$ and hence,
by the above mentioned result of Nash-Williams, $H$ can be partitioned into two edge-disjoint spanning trees.
By
Lemma \ref{bounded}, for each bounded component $C_i=(V_i;D_i,L_i) \in \scrs$, $C_i/L_i$ has
two edge-disjoint spanning trees. We can combine the edge sets of these trees with the edge sets of the two edge-disjoint  spanning trees of $H$
to obtain two edge-disjoint spanning trees in $G'/L'$. Lemma \ref{bounded} now implies that $G'$
is bounded and hence  is contained in a single bounded component of $G$. This
contradicts the fact that $|\scrs| \ge 2$. \eproof
Now we can state the main result of \cite{JK2} on global rigidity,
which establishes when length-redundancy is a necessary condition
for generic global rigidity and takes a first step towards
understanding when direction-redundancy is necessary. A subgraph of
a mixed graph is said to be {\em trivial} if it has exactly one
vertex, otherwise it is {\em non-trivial}.

\begin{theorem}\label{hend}\cite{JK2}
Suppose that $(G,p)$ is a globally rigid generic realisation of a
mixed graph $G=(V;D,L)$
and $e$ is an edge of $G$.\\
(a) If $e\in L$ and $|L|\geq 2$ then $G-e$ is rigid. \\
(b) If $e\in D$ and $G-e$ has a non-trivial rigid subgraph then $G-e$ is either rigid or unbounded.
\end{theorem}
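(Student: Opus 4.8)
The plan is to prove the contrapositive in each case, by constructing, from a framework $(G-e,q)$ equivalent to $(G-e,p)$ but not congruent to $p|_{V}$, an equivalent non-congruent framework for the whole of $(G,p)$. The first step is to set up the standard framework-averaging / perturbation machinery: if $(G-e,p)$ is flexible (part (a)) or unbounded while containing a nontrivial rigid subgraph (part (b)), we want to find a one-parameter family $(G-e, q_t)$ of equivalent frameworks that continuously deform $p$ while leaving a suitable substructure congruent to $p$, and then exploit the single missing edge $e$.

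\textbf{Part (a).} Suppose $e=uv\in L$, $|L|\ge 2$, and $G-e$ is \emph{not} rigid. Then $(G-e,p)$ has a nontrivial infinitesimal motion $z$ that is not a translation. The key observation is that since $G-e$ is direction-length and $z(u)-z(v)$ is only constrained to be perpendicular to $p(u)-p(v)$ by the \emph{removed} length edge, we have freedom along that edge. I would argue that either $z$ already fails to preserve $\|p(u)-p(v)\|$ to first order — in which case I combine $z$ with a rescaling that is \emph{not} available because $L\ne\{e\}$ forces the rest of the framework to keep its lengths, obtaining a contradiction with a careful second-order / implicit-function-theorem argument — or, more cleanly, I use the known fact (from the coning/averaging technique in \cite{JJ1,JK2}) that a flex of $G-e$ which moves the pair $u,v$ can be rescaled or reparametrised so that the distance $\|p_t(u)-p_t(v)\|$ sweeps an interval, and pick the parameter value $t\ne 0$ where $\|p_t(u)-p_t(v)\|=\|p(u)-p(v)\|$; this $p_t$ is then equivalent to $(G,p)$. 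The point needing care is ensuring $p_t$ is not congruent to $p$: here one uses that $z$ is not an isometry-infinitesimal-motion, so distances (or directions) of some \emph{other} edge are changed, which is where the hypothesis $|L|\ge 2$ — guaranteeing a length edge survives in $G-e$ — is used to rule out the dilation/rotation ambiguities that would otherwise make $p_t$ congruent to $p$.

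\textbf{Part (b).} Suppose $e=uv\in D$, and $G-e$ is both not rigid and bounded, yet has a nontrivial rigid subgraph $H$. Boundedness of $G-e$ lets us invoke Lemma \ref{bounded} / Lemma \ref{augment}: after possibly deleting direction-pure $M$-circuits we may assume $G-e$ is direction-independent, and $(G-e)/L$ fails to have two spanning trees only in a controlled way. The plan is to show that $G-e$ having a nontrivial rigid subgraph $H$ together with boundedness forces $G-e$ itself to be rigid — i.e. the case "not rigid but bounded with a nontrivial rigid subgraph" is vacuous — OR, if it is not vacuous, to build the non-congruent equivalent realisation of $G$ directly. Concretely: since $G-e$ is flexible, it has a nontrivial flex $(G-e,p_t)$; boundedness keeps $p_t$ in a compact set, so the direction $p_t(u)-p_t(v)$ varies over a connected set of directions; if that set is a single direction for all $t$, then $e$ was implied by $G-e$ and $G$ is flexible too, contradicting... — actually we don't assume $G$ rigid, so instead: pick $t$ with the direction of $p_t(u)-p_t(v)$ equal to that of $p(u)-p(v)$, giving a framework equivalent to $(G,p)$, and use the rigid subgraph $H$ (which moves rigidly, i.e. by an isometry, under the flex) together with the fact that a nontrivial portion moved by a \emph{rotation} would change infinitely many directions — to pin down non-congruence.

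\textbf{Main obstacle.} The hard part is the non-congruence bookkeeping: in the direction-length setting congruence allows translation and a $180^\circ$ rotation only, so showing $p_t\not\cong p$ amounts to finding a single length edge whose length genuinely changed or a single direction edge whose direction genuinely changed by an angle other than $0$ or $\pi$ along the flex — and doing this requires ruling out the degenerate possibility that the entire flex is a rigid-body motion of $G-e$ (impossible since $G-e$ is flexible) composed with nothing, i.e. that $p_t$ happens to land back on a congruent copy. I expect this to be handled by an analyticity argument: the flex is real-analytic in $t$, the set of $t$ with $p_t\cong p$ is therefore discrete unless it is everything, and it is not everything because $z$ is not an infinitesimal isometry; combined with boundedness (part (b)) or the surviving length edge (part (a)) to get surjectivity onto the required length/direction value, this closes the argument. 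The technical glue — relating the rigid subgraph $H$ of $G-e$ to a controlled decomposition via Lemmas \ref{bounded} and \ref{boundedcomps}, and handling direction-pure circuits via (R1)-type reductions — is where most of the routine work lies.
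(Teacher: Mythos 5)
You should first note that the paper does not actually prove Theorem \ref{hend}: it is quoted from \cite{JK2}, and the only record of its proof here is the outline given via Lemma \ref{deletedirection} and the discussion in Section \ref{sec:close}. Your overall plan (flex $G-e$, find a configuration along the flex where the deleted constraint is restored, argue non-congruence) is indeed the shape of that proof, but two essential steps are missing. The first is that you repeatedly assert the flex can be followed until the length (resp.\ direction) of the missing edge returns to its original value at some parameter $t\neq 0$, without any justification. Along an open arc of a flex there is no reason for $\|p_t(u)-p_t(v)\|$, or the direction of $p_t(u)-p_t(v)$, ever to revisit its initial value. The real argument needs the relevant component $C$ of the pinned framework space $S_{G-e,p,v_0}$ to be a \emph{compact} one-manifold, i.e.\ a circle (this is exactly what Lemma \ref{deletedirection} supplies, and is where boundedness and $r(G-e)=r(G)-1$ are used); only by traversing a closed loop is the missing constraint forced to be satisfied again. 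Your alternative ``second-order / implicit function theorem'' branch in part (a) is not an argument.

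The second and more serious gap is the non-congruence bookkeeping. You propose to handle it by observing that the set of parameters $t$ with $p_t$ congruent to $p$ is discrete; but discreteness is beside the point, because the intermediate value theorem hands you \emph{one specific} configuration $q$ at which the deleted constraint is restored, and nothing you say prevents that particular $q$ from being the congruent copy $-p$ (a $180^\circ$ rotation of $p$). This is precisely the difficulty the paper isolates: the conditional clause ``if $-p_0\notin C$'' in Lemma \ref{deletedirection}, the discussion in Section \ref{sec:close}, and Lemma \ref{lem:-p}, which shows that $-p\in C$ genuinely occurs for some graphs. The hypothesis in (b) that $G-e$ has a non-trivial rigid subgraph exists exactly to exclude $-p_0$ from $C$: along a continuous flex a non-trivial rigid subframework can only be translated, since the rotation part of a continuously varying congruence takes values in $\{I,-I\}$ and is therefore constant, whereas in $-p_0$ that subframework has been rotated by $180^\circ$. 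Your stated reason (``a nontrivial portion moved by a rotation would change infinitely many directions'') does not deliver this. Finally, your suggested first branch in (b), that boundedness plus a non-trivial rigid subgraph might force $G-e$ to be rigid, is false: Theorem \ref{vspecial} and Lemma \ref{lem:-p} are concerned with exactly such non-vacuous configurations.
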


\subsection{Substitution}\label{sec:sub}

The following subgraph substitution operation is an important tool
which we will use throughout this paper. Suppose $G=(V;D,L)$ is a
mixed graph, $U\subseteq V$, $H=G[U]$ is the  subgraph of $G$
induced by $U$, and $H'$ is another mixed graph with vertex set $U$.
Then the {\em substitution} $G'$ of $H$ by $H'$ in $G$ is obtained
from $G$ by deleting all edges of $H$ and adding all edges of $H'$.
We record the following properties.

\begin{lemma}\label{subrigid}
If $G$, $H$ and $H'$ are rigid then $G'$ is rigid.
\end{lemma}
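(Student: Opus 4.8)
The plan is to prove Lemma~\ref{subrigid} using the rank characterisation of rigidity from Section~\ref{sec:rigid}: a mixed graph is rigid if and only if the rank of its rigidity matroid equals $2|V|-2$. So it suffices to show $r(G')=2|V(G')|-2$, knowing that $r(G)=2|V|-2$, $r(H)=2|U|-2$ and $r(H')=2|U|-2$. Note $V(G')=V(G)=V$ since substitution does not change the vertex set.

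First I would pick a generic realisation $(G',p)$ of $G'$, which restricts to generic realisations $(H',p|_U)$ of $H'$ and (using the same $p$) also gives realisations of the edges of $G$ not lying in $H$. The key observation is that the row space of $R(G',p)$ contains, after we identify rows with edges, the rows coming from $E(H')$ together with the rows coming from $E(G)\setminus E(H)$. Since $(H',p|_U)$ is a generic realisation of the rigid graph $H'$, the rows indexed by $E(H')$ already span a space of dimension $2|U|-2$ that captures every infinitesimal motion of $(G',p)$ restricted to $U$ modulo translations; equivalently, any infinitesimal motion $z$ of $(G',p)$ restricts on $U$ to an infinitesimal motion of $(H',p|_U)$, which by rigidity of $H'$ must be a restriction of a translation. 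After subtracting a global translation we may assume $z$ vanishes on $U$. Now consider the graph $\widetilde G = (G\setminus E(H)) \cup H''$ where $H''$ is taken to be \emph{any} rigid realisation on $U$ — the cleanest choice is to use the \emph{same} $p|_U$ but with the edge set of $H$ itself, i.e.\ compare with $(G,p)$ directly. The point is that $z$ vanishing on $U$ means $z$ satisfies all the constraints coming from $E(G)\setminus E(H)$ (these are shared between $G$ and $G'$) and trivially all constraints from $E(H)$ (since $z|_U=0$); hence $z$ is an infinitesimal motion of $(G,p')$ for a suitable generic $p'$ agreeing with $p$ off...

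Here I would instead argue more carefully via ranks rather than chasing a single realisation, since the two graphs have different edge sets and I cannot use literally the same $p$ for both generically. The clean route: choose $p$ generic for $G'$. Then $\rank R(G',p) = (2|U|-2) + \dim(\text{space of motion-classes of } E(G)\setminus E(H)\text{ on the quotient})$. More precisely, by rigidity of $H'$ the sub-framework on $U$ has a $2$-dimensional space of infinitesimal motions (the translations), so the infinitesimal motions of $(G',p)$ are exactly those of the framework obtained by \emph{rigidifying} $U$ to a single ``super-vertex'' and then imposing the edges of $G\setminus E(H)$. This quotient framework is isomorphic, as far as its rigidity matrix rank is concerned, to the framework $(G/H, \bar p)$ on the contracted graph, with the caveat that contracting $H$ may create parallel or loop edges; loops (edges inside $U$) contribute nothing, matching the fact that $E(H')$ already accounts for all internal constraints. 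Crucially this quotient framework does not depend on which rigid $H'$ we substitute — it only sees $E(G)\setminus E(H)$ and the vertex $U$ is rigid in every case. Therefore $\rank R(G',p)$ has the \emph{same} value whether we substitute $H'$ or keep $H$; since keeping $H$ gives $G$ with $\rank R(G,\cdot)=2|V|-2$, we get $\rank R(G',p)=2|V|-2$, so $G'$ is rigid.

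The main obstacle is making the ``quotient'' argument rigorous: one must verify that an infinitesimal motion of $(G',p)$, when restricted to $U$, is forced to be (a restriction of) a translation — this uses rigidity of $(H',p|_U)$, which holds because $p|_U$ is generic as a realisation of $H'$ when $p$ is generic for $G'$ — and then that, modulo this $2$-dimensional translation subspace, the remaining constraints $E(G)\setminus E(H)$ behave identically for $G$ and $G'$. A careful way to phrase the last point is: fix any common generic $p|_U$ making $H'$ (and separately $H$) infinitesimally rigid, extend generically to the rest of $V$; then $Z(G',p)$ and $Z(G,p)$ both equal $\{z : z|_U \in \mathrm{Trans}(U),\ z \text{ satisfies } E(G)\setminus E(H)\}$, because adding a rigid bar-structure on $U$ (be it $H$ or $H'$) to a framework already having a rigid $U$ changes nothing. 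Hence $\dim Z(G',p)=\dim Z(G,p)=2$, giving rigidity of $G'$. One should also note the edge-case $|U|=1$ (then $H$ and $H'$ are empty and $G'=G$) and $|U|\le 2$ where ``rigid'' degenerates, but these are trivial.
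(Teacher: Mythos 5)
Your final argument is correct and is essentially the paper's argument in dual form: the paper notes in one line that $r(G)$ and $r(G')$ both equal the rank of the graph obtained from $G$ by joining every pair of vertices of $U$ by both a direction and a length edge (a matroid closure statement), whereas you compute that $Z(G,p)=Z(G',p)$ for a common generic $p$ because a rigid spanning subgraph on $U$ forces $z|_U$ to be a translation regardless of which one it is. The detours in the middle are unnecessary (a single generic $p:V\to\real^2$ is simultaneously generic for $G$, $G'$, $H$ and $H'$, so there is no obstacle to comparing the two frameworks directly), but the closing paragraph gives a complete and correct proof.
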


\bproof
The ranks of $G$ and $G'$ are both equal to the rank of the graph obtained from $G$
by joining all pairs of vertices of $H$ by both a direction and a length edge.
\eproof

\begin{lemma}\label{replace}
Suppose $p:V\to \real^2$ is such that $(G,p)$ and $(H',p|_U)$ are both globally rigid.
Then $(G',p)$ is globally rigid.
\end{lemma}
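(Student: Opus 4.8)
The plan is to prove Lemma~\ref{replace} by a direct argument using the definition of global rigidity and the fact that the substitution operation does not change which length and direction constraints are imposed among the vertices of $U$ beyond what $H'$ specifies, together with what $G$ already specifies on $V\setminus U$ and between $U$ and $V\setminus U$. Precisely: let $q:V\to\real^2$ be any configuration such that $(G',q)$ is equivalent to $(G',p)$. I want to deduce that $q$ is congruent to $p$.

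First I would restrict attention to $U$. Since the edge set of $G'$ contains all edges of $H'$, the restriction $(H',q|_U)$ is equivalent to $(H',p|_U)$: every direction constraint of $H'$ on a pair in $U$ forces $q(x)-q(y)$ to be a scalar multiple of $p(x)-p(y)$, and every length constraint forces $\|q(x)-q(y)\|=\|p(x)-p(y)\|$. By the hypothesis that $(H',p|_U)$ is globally rigid, $q|_U$ is congruent to $p|_U$. After composing $q$ with a translation and (if needed) a rotation by $180^\circ$ of the \emph{whole} configuration --- which changes neither equivalence class nor congruence class --- we may assume $q|_U = p|_U$ exactly. Here I should be slightly careful: the $180^\circ$ rotation is applied globally to $q$, so it is legitimate; the point is that a $180^\circ$ rotation of $\real^2$ about any centre differs from a $180^\circ$ rotation about the origin by a translation, so WLOG $q|_U=p|_U$.

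Next I would transfer this back to $G$. Consider the configuration $q$ on $V$ with $q|_U=p|_U$. I claim $(G,q)$ is equivalent to $(G,p)$. Every edge $e$ of $G$ is either an edge of $H=G[U]$, or has at least one endpoint outside $U$, in which case $e$ is also an edge of $G'$ (the substitution only alters edges inside $U$). For edges of $H$ the constraints are satisfied because $q|_U=p|_U$; for edges of $G'$ with an endpoint outside $U$, equivalence of $(G',q)$ and $(G',p)$ gives exactly the required direction/length condition. Hence $(G,q)$ is equivalent to $(G,p)$, and since $(G,p)$ is globally rigid, $q$ is congruent to $p$. Undoing the normalising translation/rotation from the previous step, the original $q$ is congruent to the original $p$, which is what we wanted.

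The step requiring the most care is the normalisation: one must check that applying a global isometry (translation, or translation composed with point reflection) to the ambient plane sends an equivalent framework to an equivalent framework and a congruent pair to a congruent pair, so that replacing $q$ by its image is harmless, and that after this normalisation the configurations agree on $U$ \emph{as functions}, not merely up to congruence. Once that is set up, the argument is essentially a bookkeeping check that every edge of $G$ lies in $H$ or in $G'$, and the two global-rigidity hypotheses plug in cleanly. No obstacle of substance arises; the lemma is a soft consequence of the definitions, which is presumably why the authors state it without further comment. I would write:

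\bproof
Let $q:V\to\real^2$ with $(G',q)$ equivalent to $(G',p)$; we show $q$ is congruent to $p$. Since every edge of $H'$ is an edge of $G'$, $(H',q|_U)$ is equivalent to $(H',p|_U)$, so by global rigidity of $(H',p|_U)$, $q|_U$ is congruent to $p|_U$. Applying to $q$ a suitable translation, possibly composed with a point reflection, changes neither its equivalence class with respect to $G'$ nor its congruence class, so we may assume $q|_U=p|_U$. Now each edge $e$ of $G$ either lies in $H=G[U]$, in which case the constraint of $e$ holds for $q$ because $q|_U=p|_U$, or has an endpoint outside $U$, in which case $e$ is an edge of $G'$ and the constraint of $e$ holds for $q$ by equivalence of $(G',q)$ and $(G',p)$. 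Hence $(G,q)$ is equivalent to $(G,p)$, so by global rigidity of $(G,p)$, $q$ is congruent to $p$. Reversing the normalising isometry, the original $q$ is congruent to $p$. Therefore $(G',p)$ is globally rigid.
\eproof
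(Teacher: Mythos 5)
Your proof is correct and follows essentially the same route as the paper: use global rigidity of $(H',p|_U)$ to get $q|_U$ congruent to $p|_U$, transfer this to equivalence of $(G,q)$ and $(G,p)$ via the edges of $H=G[U]$ together with the unchanged edges meeting $V\setminus U$, and finish with global rigidity of $(G,p)$. The only difference is cosmetic: the paper skips your normalisation step by observing directly that congruence of $q|_U$ and $p|_U$ already makes $(H,q|_U)$ equivalent to $(H,p|_U)$, so no global isometry needs to be applied to $q$.
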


\bproof
Let $(G',q)$ be an equivalent framework to $(G',p)$.
Since $(H',p|_U)$ is globally rigid, $q|_U$ is congruent to $p|_U$.
In particular, $(H,q|_U)$ and $(H,p|_U)$ are equivalent.
But $G$ and $G'$ agree on all edges not contained in $U$,
so $(G,q)$ and $(G,p)$ are equivalent. Since $(G,p)$ is globally rigid,
$q$ and $p$ are congruent. Hence $(G',p)$ is globally rigid.
\eproof

\subsection{Differential geometry and the framework space}\label{sec:dg}

Here we recall some basic concepts of differential geometry.
Let $X$ be a smooth manifold, $f:X\to \mb{R}^n$ be a smooth map, and
$k$ be the maximum rank of its derivative $df|_y$ over all $y\in X$.
A point $x\in X$ is a {\em regular point} of $f$ if $\rank
df|_x=k$. The Inverse Function Theorem states that if $U$ is open in
$\mb{R}^k$, $f:U \to \mb{R}^k$ is smooth, $x\in U$, and the
derivative $df|_x:\mb{R}^k \to \mb{R}^k$ is non-singular, then $f$
maps any sufficiently small open neighbourhood of $x$
diffeomorphically onto an open subset of $\mb{R}^k$. The following
lemma is a simple consequence of this (see \cite[Lemma 3.3]{JK2}).

\begin{lemma}\label{inverse}
Let $U$ be an open subset of $\mb{R}^m$, $f:U\to \mb{R}^n$ be a smooth map and
$x\in U$ be a regular point of $f$. Suppose that the rank of $df|_x$ is $n$.
Then there exists an open neighbourhood $W\subseteq U$ of $x$
such that $f(W)$ is an open neighbourhood of $f(x)$ in $\mb{R}^n$.
\end{lemma}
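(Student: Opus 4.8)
The plan is to reduce to the square case and invoke the Inverse Function Theorem exactly as stated in the excerpt. Since $df|_x$ is an $n\times m$ matrix of rank $n$, we must have $m\geq n$, and some $n$ of its columns are linearly independent. After permuting the coordinates of $\mb{R}^m$ (which affects neither openness nor the hypotheses) I may assume these are the first $n$ columns. Write points of $\mb{R}^m$ as pairs $(y,z)$ with $y\in\mb{R}^n$ and $z\in\mb{R}^{m-n}$, and write $x=(y_0,z_0)$. The only property of the regular point $x$ that will actually be used is that $\rank df|_x=n$, i.e.\ that $df|_x$ is surjective; the hypothesis that this rank equals the maximal rank $k$ merely ensures consistency, since $k\le n$ always holds for a map into $\mb{R}^n$.

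First I would introduce the auxiliary map $\Phi:U\to\mb{R}^m$ defined by $\Phi(y,z)=(f(y,z),z)$, which is smooth and agrees with $f$ in its first $n$ coordinates. Its derivative at $x$ has the block lower-triangular form
\[
d\Phi|_x=\begin{pmatrix} \partial f/\partial y|_x & \partial f/\partial z|_x \\ 0 & I_{m-n}\end{pmatrix},
\]
whose determinant equals $\det(\partial f/\partial y|_x)$. By the choice of the first $n$ columns this $n\times n$ block is nonsingular, so $d\Phi|_x$ is nonsingular. The Inverse Function Theorem (in the form quoted just before the lemma) then yields an open neighbourhood $W\subseteq U$ of $x$ that $\Phi$ maps diffeomorphically onto an open subset $\Phi(W)$ of $\mb{R}^m$.

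To finish, I would observe that $f=\pi\circ\Phi$, where $\pi:\mb{R}^m=\mb{R}^n\times\mb{R}^{m-n}\to\mb{R}^n$ is projection onto the first $n$ coordinates. Since $\pi$ is an open map and $\Phi(W)$ is open, $f(W)=\pi(\Phi(W))$ is open in $\mb{R}^n$; it contains $f(x)=\pi(\Phi(x))$, so it is the required open neighbourhood of $f(x)$. (When $m=n$ there is no $z$-variable, $\Phi=f$, and the argument collapses to a direct application of the Inverse Function Theorem to $f$ itself.) I do not expect any genuine obstacle here: the entire content is the standard submersion/open-mapping principle, and the only mild subtlety is the device of padding $f$ with the complementary coordinates $z$ so that the Inverse Function Theorem applies to a map $\mb{R}^m\to\mb{R}^m$, after which openness of $f(W)$ follows immediately because the coordinate projection $\pi$ is open.
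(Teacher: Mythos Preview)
Your argument is correct and is precisely the standard submersion/open-mapping argument that the paper has in mind: the paper does not actually prove this lemma, but merely records it as ``a simple consequence'' of the Inverse Function Theorem, citing \cite[Lemma 3.3]{JK2}. One cosmetic slip: the displayed matrix for $d\Phi|_x$ has its zero block in the lower-left, so it is block \emph{upper}-triangular rather than lower-triangular; your determinant computation is unaffected.
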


The following function plays an important role in rigidity theory.
For $v_1,v_2\in V$ with $p(v_i)=(x_i,y_i)$ let
$l_p(v_1,v_2)=(x_1-x_2)^2+(y_1-y_2)^2$,
and $s_p(v_1,v_2)=(y_1-y_2)/(x_1-x_2)$ whenever $x_1\neq x_2$.
Suppose $e=v_1v_2 \in D\cup L$.
We say that $e$ is {\em vertical} in $(G,p)$ if $x_1=x_2$.
The {\em length} of $e$ in $(G,p)$ is $l_p(e)=l_p(v_1,v_2)$,
and the {\em slope} of $e$ is $s_p(e)=s_p(v_1,v_2)$,
whenever $e$ is not vertical in $(G,p)$.
Let $V=\{v_1,v_2,\ldots,v_n\}$ and $D\cup L=\{e_1,e_2,\ldots,e_m\}$.
We view $p$ as a point $(p(v_1),p(v_2),\ldots,p(v_n))$ in $\real^{2n}$.
Let $T$ be the set of all points $p\in \real^{2n}$ such that $(G,p)$ has no vertical
direction edges. Then the {\it rigidity map} $f_G:T\to \real^m$ is
given by $f_G(p)=(h(e_1),h(e_2),\ldots,h(e_m))$, where
$h(e_i)=l_p(e_i)$ if $e_i\in L$ and $h(e_i)=s_p(e_i)$ if $e_i\in D$.

One can verify (see \cite{JK2}) that each row in the Jacobian matrix
of the rigidity map is a non-zero multiple of the corresponding row
in the rigidity matrix, so these matrices have the same rank. Thus
the rigidity matrix achieves its maximum rank at a realisation $(G,p)$
when $p$ is a regular point of the rigidity map. In particular, this
is the case when $(G,p)$ is generic.

The {\em framework space} $S_{G,p,v_0} \sub \mb{R}^{2|V|-2}$ consists of all $q$
in standard position with respect to $v_0$ with $(G,q)$ equivalent to $(G,p)$.
Here we recall that `standard position' means that $q(v_0)=(0,0)$,
and we identify a realisation $(G,q)$ with the vector in $\mb{R}^{2|V|-2}$ obtained by
concatenating the vectors $q(v)$ for $v \in V \sm \{v_0\}$.
The proof of the following lemma is the same as that of \cite[Theorem 1.3]{JK2},
omitting the part that proves $-p_0 \notin C$, as this is now an assumption.

\begin{lemma}\label{deletedirection}
Suppose $(G,p)$ is a generic direction-length framework, $e$ is a
direction edge of $G$, $G$ is rigid, and $H=G-e$ is bounded and not
rigid. Let $v_0$ be a vertex of $G$, let $p_0$ be obtained from $p$
by translating $v_0$ to the origin, and let $C$ be the connected
component of the framework space $S_{H,p,v_0}$ containing $p_0$.
Then $C$ is diffeomorphic to a circle. Furthermore, if $-p_0 \notin C$
then $(G,p)$ is not globally rigid.
\end{lemma}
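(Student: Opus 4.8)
The plan is to analyze the framework space $S_{H,p,v_0}$ near the point $p_0$ and to use the hypotheses on ranks to pin down the local structure, then leverage the geometry of the component $C$ to produce a non-congruent equivalent realisation of $(G,p)$. First I would set up the rigidity map restricted to standard position: writing $f_H$ for the rigidity map of $H$ (slopes of direction edges, squared lengths of length edges), the framework space $S_{H,p,v_0}$ is a level set $f_H^{-1}(f_H(p_0))$ intersected with the standard-position subspace. Since $(G,p)$ is generic, $p_0$ is a regular point of $f_G$ and of $f_H$, so $\rank R(H,p)_{v_0} = r(H)$. Because $G$ is rigid we have $r(G)=2|V|-2$, and because $H=G-e$ is not rigid, $r(H)=2|V|-3$; thus the level set of $f_H$ through $p_0$, cut down to standard position, is a smooth $1$-dimensional manifold near $p_0$ — the derivative has corank exactly $1$ there. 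So the component $C$ containing $p_0$ is a smooth connected $1$-manifold. To see that it is a \emph{circle} rather than a line, I would invoke the boundedness hypothesis: by Lemma \ref{augment} the augmented graph $H^+$ (add a direction edge parallel to each length edge) is rigid, hence every equivalent realisation $q$ of $(H,p)$ keeps all pairwise distances bounded by a fixed $K$; together with standard position this confines $C$ to a compact region, and a connected $1$-manifold that is closed in $\mathbb{R}^{2|V|-2}$ (being a relatively-closed subset of the compact level set) and bounded must be compact without boundary, hence diffeomorphic to $S^1$. This is essentially the argument of \cite[Theorem 1.3]{JK2}, which we may quote.

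For the second assertion, suppose $-p_0 \notin C$. I want to find $q \in C$ with $q \ne p_0$ and $q$ not congruent to $p_0$; since $S_{G,p,v_0} = S_{H,p,v_0} \cap \{q : s_q(e) = s_p(e)\}$ (adding back the single direction edge $e$ imposes exactly the one slope equation), it then suffices to arrange that this $q$ also satisfies the slope constraint of $e$. The key point is that restricting the single extra coordinate function $p \mapsto s_p(e)$ to the circle $C$ gives a smooth function $g\colon C \to \mathbb{R}$ with $g(p_0) = s_p(e)$; its derivative at $p_0$ is the row of $R(G,p)_{v_0}$ corresponding to $e$, which is nonzero modulo the span of the rows of $R(H,p)_{v_0}$ precisely because $(G,p)$ is independent-enough for $r(G) = r(H)+1$. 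Hence $p_0$ is not a critical point of $g$, so $g$ is non-constant on $C$ and takes the value $s_p(e)$ at a second point $q \in C$, $q \ne p_0$. This $q$ lies in $S_{G,p,v_0}$, so $(G,q)$ is equivalent to $(G,p)$. It remains to check $q$ is not congruent to $p_0$ in standard position: the only congruences fixing $v_0$ are the identity and the point reflection $q \mapsto -q$, and $-p_0 \notin C$ while $q \in C$, so $q \ne -p_0$; and $q \ne p_0$ by construction. Therefore $(G,p)$ is not globally rigid.

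The main obstacle I anticipate is the rank bookkeeping that makes the two derivative claims precise — namely that near $p_0$ the space $S_{H,p,v_0}$ really is a $1$-manifold (no higher-dimensional pieces, no singularities) and that the edge $e$ genuinely cuts $C$ transversally, i.e. that $e$'s rigidity-matrix row is independent of $H$'s rows at the generic point. Both follow from genericity plus the relations $r(G)=2|V|-2$, $r(H)=2|V|-3$, but one has to be careful that "regular point of the rigidity map" is being used consistently for $H$, $G$, and their restrictions to standard position, using the observation (recalled in Section \ref{sec:dg}) that the Jacobian of the rigidity map and the rigidity matrix have equal rank row-by-row. A secondary subtlety is the passage from "closed and bounded $1$-manifold" to "circle"; this is where one needs that $C$, as a connected component of the level set, is relatively closed, so that boundedness upgrades to compactness. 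Since the lemma explicitly says the proof is that of \cite[Theorem 1.3]{JK2} with the part showing $-p_0\notin C$ removed, I would present the argument at the level of detail above and cite \cite{JK2} for the routine verifications.
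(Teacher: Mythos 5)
Your proposal follows the same route as the paper, which offers no independent argument for this lemma: it simply states that the proof is that of \cite[Theorem 1.3]{JK2} with the verification of $-p_0\notin C$ deleted. Your reconstruction of that argument is right in outline --- the circle structure of $C$ comes from genericity (regularity of the level set, which must be checked at \emph{every} point of $C$, not just at $p_0$; this is one of the ``routine verifications'' you correctly delegate to \cite{JK2}) together with boundedness for compactness, and the second equivalent realisation comes from tracking the direction of $e$ around $C$ and using that the only congruences fixing $v_0$ are $\pm\mathrm{id}$.

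The one step that does not work as written is treating $q\mapsto s_q(e)$ as a smooth \emph{real-valued} function $g$ on all of $C$. The edge $e$ is not an edge of $H$, so nothing constrains its direction on $C$ and it may become vertical at some points, where the slope is undefined; the natural target of this map is the circle of directions, not $\mathbb{R}$. This matters because your concluding inference --- ``$p_0$ is not a critical point of $g$, hence $g$ attains the value $g(p_0)$ at a second point of $C$'' --- is valid for real-valued $g$ but false for circle-valued $g$ (a diffeomorphism from $C$ onto the circle of directions attains every value exactly once). The fix is to work with the unit vector $F(q)=(q(u)-q(v))/\|q(u)-q(v)\|$, which is defined on all of $C$ since $q(u)\neq q(v)$ there by \cite[Lemma 3.4]{JJ2}, and to lift its angle to the universal cover: if the lifted angle returns to its starting value after one circuit of $C$, your non-criticality argument yields a second point with $F(q)=F(p_0)$; if not, it changes by a nonzero multiple of $2\pi$ and hence passes through the antipodal value, yielding $q$ with $F(q)=-F(p_0)$. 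Either way $(G,q)$ is equivalent to $(G,p)$ and $q\neq\pm p_0$, so the rest of your argument goes through. Apart from this localized repair, your proof is the paper's proof.
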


\subsection{Field extensions and genericity}\label{sec:ex}

A mixed framework $(G,p)$ is {\em quasi-generic} if it is a translation of a generic framework.
We will be mostly concerned with quasi-generic frameworks in standard position,
i.e.\ with one vertex positioned at the origin. Such frameworks are characterised
by the following elementary lemma.

\begin{lemma}\cite{JJ2}\label{quasi}
Let $(G,p)$ be a framework with vertices $\{v_1,v_2,...,v_n\}$,
$p(v_1)=(0,0)$ and $p(v_i)=(p_{2i-1},p_{2i})$ for $2\leq i\leq n$.
Then $(G,p)$ is quasi-generic if and only if
$\{p_3,p_4,\ldots, p_{2n}\}$ is algebraically independent over $\rat$.
\end{lemma}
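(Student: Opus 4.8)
The plan is to compare the coordinates of $(G,p)$ with those of a generic framework by a transcendence-degree count over $\rat$. Throughout, write $p(v_i)=(p_{2i-1},p_{2i})$, so that $p_1=p_2=0$, and recall that a framework is generic precisely when its full coordinate set is algebraically independent over $\rat$, equivalently has transcendence degree equal to twice the number of vertices.

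For the forward implication, suppose $(G,p)$ is quasi-generic, say $p=p'+t$ where $(G,p')$ is generic and $t=(t_1,t_2)\in\real^2$. Writing $p'(v_i)=(p'_{2i-1},p'_{2i})$, the condition $p(v_1)=(0,0)$ forces $t=(-p'_1,-p'_2)$, so that $p_{2i-1}=p'_{2i-1}-p'_1$ and $p_{2i}=p'_{2i}-p'_2$ for $2\le i\le n$. I would then observe that $\rat(p'_1,\ldots,p'_{2n})=\rat(p_3,\ldots,p_{2n},p'_1,p'_2)$, since each $p'_{2i-1}=p_{2i-1}+p'_1$ and $p'_{2i}=p_{2i}+p'_2$ is recovered from the right-hand side. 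As the left-hand field has transcendence degree $2n$ over $\rat$, and the right-hand field is obtained from $\rat(p_3,\ldots,p_{2n})$ by adjoining the two elements $p'_1,p'_2$, it follows that $\rat(p_3,\ldots,p_{2n})$ has transcendence degree at least $2n-2$. Being generated by exactly $2n-2$ elements, this field has transcendence degree exactly $2n-2$, so $\{p_3,\ldots,p_{2n}\}$ is algebraically independent over $\rat$.

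For the converse, suppose $\{p_3,\ldots,p_{2n}\}$ is algebraically independent over $\rat$, and set $F=\rat(p_3,\ldots,p_{2n})$. Since $\real$ has uncountable transcendence degree over $\rat$ while $F$ is finitely generated, hence countable, I can choose real numbers $t_1,t_2$ algebraically independent over $F$; then $\rat(p_3,\ldots,p_{2n},t_1,t_2)$ has transcendence degree $2n$. Define $p'=p-(t_1,t_2)$, so that $p'(v_1)=(-t_1,-t_2)$ and $p'(v_i)=(p_{2i-1}-t_1,p_{2i}-t_2)$ for $i\ge2$. I would then check that the field generated over $\rat$ by the coordinates of $p'$ equals $\rat(p_3,\ldots,p_{2n},t_1,t_2)$: it contains $t_1,t_2$ and recovers each $p_{2i-1}=(p_{2i-1}-t_1)+t_1$ and $p_{2i}=(p_{2i}-t_2)+t_2$, and the reverse containment is immediate. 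Hence the $2n$ coordinates of $p'$ have transcendence degree $2n$, so are algebraically independent, $(G,p')$ is generic, and $p=p'+(t_1,t_2)$ exhibits $(G,p)$ as a translate of a generic framework, i.e.\ quasi-generic.

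The only genuinely delicate points are bookkeeping rather than conceptual: one must verify that adjoining, respectively subtracting, the two translation coordinates alters the generating sets in exactly the claimed way, and---in the converse---that suitable real translation parameters $t_1,t_2$ transcendental over $F$ actually exist, which is precisely where the uncountability of $\real$ is needed. Everything else reduces to a straightforward transcendence-degree count.
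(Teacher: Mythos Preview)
Your proof is correct. The paper does not actually prove this lemma; it merely states it with a citation to \cite{JJ2} and calls it ``elementary'', so there is no proof in the paper to compare against. Your transcendence-degree argument is a clean and complete justification: the forward direction correctly uses that adjoining the two translation coordinates $p'_1,p'_2$ to $\rat(p_3,\ldots,p_{2n})$ recovers the full generic field of transcendence degree $2n$, forcing $\{p_3,\ldots,p_{2n}\}$ to be algebraically independent; and the converse correctly manufactures a generic translate by choosing $t_1,t_2$ transcendental over $\rat(p_3,\ldots,p_{2n})$, which is possible since $\real$ has uncountable transcendence degree over any countable subfield.
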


Given a vector $p\in \mb{R}^d$, $\mb{Q}(p)$ denotes the field extension of $\rat$ by the
coordinates of $p$. We say that $p$ is {\em generic} in $\mb{R}^d$ if the coordinates of $p$ are algebraically independent over $\rat$.
Given fields $K,L$ with $K\sub L$ the {\em
transcendence degree} $td[L:K]$ of $L$ over $K$ is the size of the
largest subset of $L$ which is algebraically independent over $K$.
A reformulation of Lemma
\ref{quasi} is that if $(G,p)$ is a framework with $n$ vertices, one
of which is at the origin, then $(G,p)$ is quasi-generic if and only
if $td[\mb{Q}(p):\mb{Q}]=2n-2$.

Recall that $G=(V;D,L)$ is independent if $D \cup L$ is independent
in the (generic) rigidity matroid of $G$, and that $f_G$ denotes the
rigidity map of $G$, which is defined at all realisations $(G,p)$ with
no vertical direction edges. The next result relates the genericity of  $f_G(p)$
to the genericity of  $p$ when $G$ is independent.

\begin{lemma}\cite{JJ2}\label{qgen1}
Suppose that $G$ is an independent mixed graph and
$(G,p)$ is a quasi-generic realisation of $G$.
Then $f_G(p)$ is generic.
\end{lemma}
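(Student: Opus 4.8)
\emph{Strategy and Step 1 (reduction to standard position).} The plan is to combine the description of the Jacobian of the rigidity map (recalled in Section~\ref{sec:dg}) with the algebraic independence of the coordinates of a quasi-generic point. Since the squared-length and slope functions are invariant under translation, $f_G(p)=f_G(p')$ whenever $p'$ is a translate of $p$, and $(G,p')$ is again quasi-generic. So we may assume $(G,p)$ is in standard position, $p(v_0)=(0,0)$ for some $v_0\in V$; write $n=|V|$ and $m=|D|+|L|$, and view $p$ as the point $\bar p=(p_3,p_4,\dots,p_{2n})\in\real^{2n-2}$. By Lemma~\ref{quasi} the coordinates $p_3,\dots,p_{2n}$ are algebraically independent over $\rat$; that is, $\bar p$ is a generic point of $\real^{2n-2}$. (Quasi-genericity also rules out vertical direction edges, so $f_G$ is defined at $p$.)

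\emph{Step 2 (the coordinate functions of the rigidity map are algebraically independent over $\rat$).} Let $\bar f_G$ be the map on the standard-position slice regarded as a function of the $2n-2$ free coordinates $x_3,\dots,x_{2n}$, and let $h_1,\dots,h_m\in\rat(x_3,\dots,x_{2n})$ be its coordinate functions; each $h_i$ is either a squared length or a slope of an edge, hence a rational function with coefficients in $\rat$. By the remark in Section~\ref{sec:dg}, the Jacobian of $f_G$ at any realisation is obtained from the rigidity matrix by scaling each row by a nonzero scalar; deleting the two columns of $v_0$ shows that the Jacobian of $\bar f_G$ is obtained in the same way from $R(G,q)_{v_0}$. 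Since $G$ is independent, $\rank R(G,q)=m$ for generic $q$, and $\rank R(G,q)_{v_0}=\rank R(G,q)$; hence some $m\times m$ minor of the Jacobian matrix $(\partial h_i/\partial x_j)$ is not identically zero, so this matrix has rank $m$ over the function field $\rat(x_3,\dots,x_{2n})$ (in particular $m\le 2n-2$). By the Jacobian criterion for algebraic independence in characteristic zero, $h_1,\dots,h_m$ are algebraically independent over $\rat$.

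\emph{Step 3 (specialising at the generic point $\bar p$) and the main obstacle.} Suppose for contradiction that $f_G(p)$ is not generic, so $P(h_1(\bar p),\dots,h_m(\bar p))=0$ for some nonzero $P\in\rat[y_1,\dots,y_m]$. By Step~2 the element $Q:=P(h_1,\dots,h_m)$ of $\rat(x_3,\dots,x_{2n})$ is nonzero; writing $Q=A/B$ with nonzero $A,B\in\rat[x_3,\dots,x_{2n}]$, the algebraic independence of the coordinates of $\bar p$ over $\rat$ forces $B(\bar p)\ne 0$ and $A(\bar p)\ne 0$, so $Q(\bar p)\ne 0$, a contradiction. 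Hence $f_G(p)$ is generic. The argument is mostly formal once two inputs are in place: the identification of the Jacobian of the rigidity map with a row-rescaling of the rigidity matrix (already recorded in Section~\ref{sec:dg}) and the standard Jacobian criterion for algebraic independence. The one point that genuinely matters is the reduction to standard position in Step~1: it is precisely this that upgrades the quasi-generic point to a \emph{fully} generic point of $\real^{2n-2}$, which is what makes the specialisation in Step~3 valid.
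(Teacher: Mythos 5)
The paper does not prove this lemma --- it is imported from \cite{JJ2} --- so there is no internal proof to compare against; your argument is correct and is essentially the standard one. Translating to standard position so that Lemma \ref{quasi} upgrades quasi-genericity of $p$ to genericity of the free coordinates, identifying the Jacobian of the rigidity map with a row-rescaling of $R(G,p)_{v_0}$ (of rank $m=|D|+|L|$ by independence and by $\rank R(G,p)_{v_0}=\rank R(G,p)$), invoking the characteristic-zero Jacobian criterion for algebraic independence of the coordinate functions, and then specialising at the generic point is exactly the right chain of reasoning, and each step is justified.
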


We use $\ov{K}$ to denote the algebraic closure of a field $K$. Note
that $td[\ov{K}:K]=0$. We say that $G$ is {\em minimally rigid} if
it is rigid but $G-e$ is not rigid for any edge $e$; equivalently
$G$ is both rigid and independent. The following lemma
relates $\overline{\rat(p)}$ and $\overline{\rat(f_G(p))}$ when $G$ is minimally rigid.

\begin{lemma}\cite{JJ2}\label{qgen2}
Let $G$ be a minimally rigid mixed graph and
$(G,p)$ be a realisation of $G$ with no vertical direction edges and
with $p(v)=(0,0)$ for some vertex $v$ of $G$.
If $f_G(p)$ is generic
then $\overline{\rat(p)}=\overline{\rat(f_G(p))}$.
\end{lemma}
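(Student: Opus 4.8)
The plan is to prove this by a transcendence-degree count; no differential geometry is needed. Write $n=|V|$ and $m=|D|+|L|$, so that $f_G$ sends (an open subset of) $\real^{2n}$ into $\real^m$. The first point is that minimal rigidity pins down $m$: since $G$ is rigid we have $r(G)=2n-2$, and since $G$ is independent the ground set $D\cup L$ of the rigidity matroid is independent, so $m=|D\cup L|=r(G)=2n-2$. Consequently the hypothesis that $f_G(p)$ is generic says exactly that the $2n-2$ coordinates of $f_G(p)$ are algebraically independent over $\rat$, i.e.\ $td[\rat(f_G(p)):\rat]=2n-2$.

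Next I would record the two containments of fields. First, each coordinate of $f_G(p)$ is either $l_p(e_i)$, a polynomial in the coordinates of $p$, or $s_p(e_i)=(y_1-y_2)/(x_1-x_2)$, a rational function of the coordinates of $p$ whose denominator is nonzero precisely because $(G,p)$ has no vertical direction edges; hence $\rat(f_G(p))\subseteq\rat(p)$, and in particular $\overline{\rat(f_G(p))}\subseteq\overline{\rat(p)}$. Second, $p(v)=(0,0)$ means that two of the $2n$ coordinates of $p$ lie in $\rat$, so $\rat(p)$ is generated over $\rat$ by at most $2n-2$ elements and therefore $td[\rat(p):\rat]\le 2n-2$. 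Feeding these into the additivity of transcendence degree over the tower $\rat\subseteq\rat(f_G(p))\subseteq\rat(p)$ gives
\begin{align*}
2n-2 \ \geq\ td[\rat(p):\rat] &= td[\rat(p):\rat(f_G(p))] + td[\rat(f_G(p)):\rat]\\
&= td[\rat(p):\rat(f_G(p))] + (2n-2),
\end{align*}
which forces $td[\rat(p):\rat(f_G(p))]=0$ (and, as a byproduct, $td[\rat(p):\rat]=2n-2$, so $(G,p)$ is quasi-generic by the reformulation of Lemma~\ref{quasi}). Thus every coordinate of $p$ is algebraic over $\rat(f_G(p))$, so $\rat(p)\subseteq\overline{\rat(f_G(p))}$ and hence $\overline{\rat(p)}\subseteq\overline{\rat(f_G(p))}$. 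Combined with the reverse inclusion already noted, this gives $\overline{\rat(p)}=\overline{\rat(f_G(p))}$.

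There is no serious obstacle here once the setup is in place: the argument is essentially bookkeeping about transcendence degrees. The only steps that genuinely use the hypotheses (rather than being purely formal) are the equality $m=2n-2$, which needs both halves of minimal rigidity, and the inclusion $\rat(f_G(p))\subseteq\rat(p)$, which needs the absence of vertical direction edges so that the slope coordinates are honest rational functions of $p$; the role of $p(v)=(0,0)$ is just to bound $td[\rat(p):\rat]$ by $2n-2$. The one place where a little care is warranted is making sure one is comparing transcendence degrees, not dimensions of Jacobian images, so that no regularity or genericity of $p$ itself needs to be assumed — it drops out of the count.
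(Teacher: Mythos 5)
Your proof is correct. Note that the paper itself gives no proof of this lemma --- it is imported from \cite{JJ2} --- but your transcendence-degree count (using minimal rigidity to get $|D\cup L|=2|V|-2$, the tower formula over $\rat\subseteq\rat(f_G(p))\subseteq\rat(p)$, and the normalisation $p(v)=(0,0)$ to cap $td[\rat(p):\rat]$ at $2|V|-2$) is precisely the standard argument used there and in the analogous length-pure lemma, so there is nothing to flag.
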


Lemmas \ref{qgen1} and \ref{qgen2} imply the following result for
rigid mixed graphs.

\begin{cor}\label{qgen3}
Let $G$ be a rigid mixed graph and
$(G,p)$ be a quasi-generic realisation of $G$
with $p(v)=(0,0)$ for some vertex $v$ of $G$.
Then $\overline{\rat({p})}=\overline{\rat(f_G({p}))}$.
\end{cor}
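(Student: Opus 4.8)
The plan is to deduce Corollary~\ref{qgen3} from Lemmas~\ref{qgen1} and~\ref{qgen2} by reducing to the minimally rigid case via a spanning basis of the rigidity matroid. First I would choose a maximal independent subset $D'\cup L'$ of $D\cup L$ in the rigidity matroid $M(G)$; since $G$ is rigid, the subgraph $G'=(V;D',L')$ obtained by keeping only these edges (and no isolated vertices are created, as $G$ has rank $2|V|-2$ so $G'$ spans $V$) is both rigid and independent, hence minimally rigid, and $(G',p|_{G'})$ has no vertical direction edges because $(G,p)$ is quasi-generic. Since $G'$ is independent and $(G',p)$ is quasi-generic, Lemma~\ref{qgen1} gives that $f_{G'}(p)$ is generic; then Lemma~\ref{qgen2}, applied with the vertex $v$ at the origin, yields $\overline{\rat(p)}=\overline{\rat(f_{G'}(p))}$.

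It remains to pass from $f_{G'}(p)$ to $f_G(p)$. The point is that $f_G(p)$ is obtained from $f_{G'}(p)$ by adjoining the coordinates $h(e)$ for the edges $e\in (D\cup L)\setminus(D'\cup L')$, and each such coordinate $h(e)$ (a squared length or a slope) is a rational function of the point configuration $p$, hence lies in $\rat(p)\subseteq\overline{\rat(p)}$. Therefore $\overline{\rat(f_G(p))}\subseteq\overline{\rat(p)}$. Conversely $\overline{\rat(f_{G'}(p))}\subseteq\overline{\rat(f_G(p))}$ since $f_{G'}(p)$ is a subvector of $f_G(p)$ (after reordering coordinates), and combining with the equality from the minimally rigid case gives $\overline{\rat(p)}=\overline{\rat(f_{G'}(p))}\subseteq\overline{\rat(f_G(p))}\subseteq\overline{\rat(p)}$, forcing equality throughout. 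Hence $\overline{\rat(p)}=\overline{\rat(f_G(p))}$.

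I expect the only real subtlety — and hence the main thing to check carefully — is that the restriction of the quasi-generic framework $(G,p)$ to the spanning subgraph $G'$ is still quasi-generic in the sense required by Lemma~\ref{qgen2} (namely no vertical direction edges and one vertex at the origin), and that $G'$ genuinely spans $V$ so that no vertices are lost in the restriction; both follow from the rank of $M(G)$ being $2|V|-2$ together with the hypothesis that some vertex of $(G,p)$ sits at the origin and that quasi-genericity rules out vertical direction edges. Everything else is the routine observation that slopes and squared lengths are rational functions of the coordinates, so adjoining them does not change the algebraic closure.
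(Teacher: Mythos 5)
Your proposal is correct and follows essentially the same route as the paper: pass to a minimally rigid spanning subgraph (a basis of $M(G)$), apply Lemma \ref{qgen1} and then Lemma \ref{qgen2}, and conclude via the containments $\overline{\rat(f_{G'}(p))}\subseteq\overline{\rat(f_G(p))}\subseteq\overline{\rat(p)}$. The paper leaves these containments as ``not difficult to see''; your justification of them (subvector in one direction, rationality of slopes and squared lengths in the other) is exactly the intended argument.
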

\bproof Let $H$ be a minimally rigid spanning subgraph of $G$. By Lemma \ref{qgen1},
$f_H(p)$ is generic. Hence Lemma \ref{qgen2} gives
$\overline{\rat({p})}=\overline{\rat(f_H({p}))}$. It is not difficult to see that
$\overline{\rat(f_H({p}))}\subseteq \overline{\rat(f_G({p}))}\subseteq \overline{\rat({p})}$.
Thus $\overline{\rat({p})}=\overline{\rat(f_G({p}))}$.
\eproof

We also need the following lemma, which implies that every
realisation of a rigid mixed graph which is equivalent to a generic
realisation is quasi-generic.

\begin{lemma}\cite{JJ2}\label{equiv}
Let $(G,p)$ be a quasi-generic realisation of a rigid mixed graph $G$.
Suppose that $(G,q)$ is equivalent to $(G,p)$
and that $p(v)=(0,0)=q(v)$ for some vertex $v$ of $G$.
Then $\overline{\rat(p)}=\overline{\rat(q)}$,
so $(G,q)$ is quasi-generic.
\end{lemma}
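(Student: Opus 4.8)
The plan is to transfer the problem to the rigidity map $f_G$ and then invoke the transcendence-degree results of Section~\ref{sec:ex}. Write $V=\{v_1,\dots,v_n\}$ with $p(v_1)=q(v_1)=(0,0)$. First I would reduce to the case when $G$ is minimally rigid: since $G$ is rigid it contains a minimally rigid spanning subgraph $H$ (take the edge set $B$ of a basis of $M(G)$; no vertex is left isolated by $B$, because an edge at such a vertex cannot lie in the span of the remaining rows of the rigidity matrix, so $(V,B)$ is rigid and independent). Restricting constraints, $(H,q)$ is equivalent to $(H,p)$, and $(H,p)$ is quasi-generic because quasi-genericity is a property of the configuration alone. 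Since the conclusion $\overline{\rat(p)}=\overline{\rat(q)}$ and the final deduction (via Lemma~\ref{quasi}) involve only $p$ and $q$, it suffices to prove the statement with $H$ in place of $G$.

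Next I would check that $f_H$ is defined at $p$ and $q$ and that $f_H(p)=f_H(q)$. Quasi-genericity of $(H,p)$ guarantees that no two vertices share a coordinate, so $(H,p)$ has no vertical direction edge and $f_H(p)$ is defined. For $(H,q)$ I would argue that a vertical direction edge, or a direction edge with coincident endpoints, in $(H,q)$ would — via the scalar-multiple relation of equivalence, together with the fact that a length edge on the same pair would contradict equivalence with the non-degenerate $(H,p)$ — force the same edge to be vertical (or degenerate) in $(H,p)$, which is impossible; likewise no length edge of $(H,q)$ can have coincident endpoints. Granting this, equality of lengths on $L$ and of slopes on $D$ under equivalence gives $f_H(p)=f_H(q)$.

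Now I would finish with the field-theoretic input. Since $H$ is independent and $(H,p)$ is quasi-generic, Lemma~\ref{qgen1} shows that $f_H(p)$ — hence also $f_H(q)$ — is generic. Applying Lemma~\ref{qgen2} to the minimally rigid graph $H$ at each of the configurations $p$ and $q$ (each has a vertex at the origin, no vertical direction edge, and generic image under $f_H$) yields
\[
\overline{\rat(p)}=\overline{\rat(f_H(p))}=\overline{\rat(f_H(q))}=\overline{\rat(q)}.
\]
Consequently $td[\rat(q):\rat]=td[\rat(p):\rat]=2n-2$, the last equality holding because $(G,p)$ is quasi-generic; since $q(v_1)=(0,0)$, the reformulation of Lemma~\ref{quasi} then gives that $(G,q)$ is quasi-generic.

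I expect the only genuinely delicate step to be the non-degeneracy check in the second paragraph — confirming that $(G,q)$ has no vertical direction edge and no direction edge whose endpoints coincide, so that $f_G$ is actually defined at $q$; the remaining steps are a direct assembly of Lemmas~\ref{quasi}, \ref{qgen1} and \ref{qgen2}.
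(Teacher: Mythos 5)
The paper does not prove Lemma \ref{equiv} itself --- it is quoted from \cite{JJ2} --- but your strategy (pass to a minimally rigid spanning subgraph $H$, observe that $f_H(q)=f_H(p)$, and chain Lemmas \ref{qgen1} and \ref{qgen2} applied at both $p$ and $q$, finishing with Lemma \ref{quasi}) is exactly the standard proof from that source, and every step of it is assembled correctly except one.

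The gap is the step you yourself flag: showing that $f_H$ is actually defined at $q$. Your argument for vertical direction edges is fine: if $uv\in D$ and $q(u)\neq q(v)$, then $p(u)-p(v)$ is a nonzero scalar multiple of $q(u)-q(v)$, so a vertical edge in $q$ would be vertical in $p$, contradicting quasi-genericity; and a length edge cannot degenerate in $q$ since lengths are preserved exactly. But for a direction edge $uv$ with $q(u)=q(v)$ your argument collapses: the definition of equivalence in this paper constrains direction edges only ``for all $uv\in D$ with $q(u)\neq q(v)$'', so when the endpoints coincide in $q$ the equivalence relation imposes no condition at all on that edge, and nothing is ``forced back'' onto $(H,p)$ via the scalar-multiple relation. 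A priori $(G,q)$ could be equivalent to $(G,p)$ while collapsing a direction edge, in which case $f_H(q)$ is undefined and the whole chain of equalities breaks. Excluding this possibility requires a separate, genuinely non-trivial argument --- it is precisely Lemma 3.4 of \cite{JJ2} (invoked elsewhere in this paper, in the proof of Theorem \ref{vspecial}, to guarantee $p''(u_0)\neq p''(v_0)$), which shows that a realisation equivalent to a quasi-generic realisation of a rigid mixed graph never identifies two vertices. You should either cite that lemma explicitly or supply its proof; the one-sentence justification you give does not cover the coincident-endpoint case.
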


\section{Realisations of graphs with given direction constraints}\label{sec:realise}

Here we give a result concerning the realisation of a graph as a
direction-pure framework with given directions for its edges.
We need the
following concepts, introduced by Whiteley in \cite{W2}. A {\em
frame} is a graph $G=(V,E)$ together with a map $q:E\to \real^2$.
The {\em incidence matrix} of the frame $(G,q)$ is an $|E|\times
2|V|$ matrix $I(G,q)$ defined as follows. We first choose an
arbitrary reference orientation for the edges of $E$. Each edge in
$E$ corresponds to a row of $I(G,q)$ and each vertex of $V$ to two
consecutive columns. The submatrix of $I(G,q)$ with row labeled by
$e=uv\in E$ and pairs of columns labeled by $x\in V$ is $q(e)$ if $x=u$, is
$-q(e)$ if $x=v$, and is the $2$-dimensional zero vector otherwise.
It is known (see \cite{W2}) that when $q$ is generic, $I(G,q)$ is a
linear representation of $M_2(G)$ (the matroid union of two copies
of the cycle matroid of $G$). Thus we may use the characterisation
of independence in $M_2(G)$ given by Nash-Williams \cite{NW} to
determine when $I(G,q)$ has linearly independent rows. For
$X\subseteq V$, let $i_G(X)$ denote the number of edges of $G$
between vertices in $X$.

\begin{theorem}\label{frame} Suppose $G=(V,E)$ is a graph and $q:E\to \real^2$
is generic. Then the rows of $I(G,q)$ are linearly independent if
and only if $i_G(X)\leq 2|X|-2$ for all $\es\neq X\subseteq V$.
\end{theorem}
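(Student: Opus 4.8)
The plan is to reduce the statement about the incidence matrix $I(G,q)$ to a purely combinatorial statement about the matroid union $M_2(G)$, exactly as the paragraph preceding the theorem suggests. First I would invoke Whiteley's result from \cite{W2}: for generic $q$, the matrix $I(G,q)$ is a linear representation of $M_2(G)$, the union of two copies of the cycle matroid of $G$. Hence the rows of $I(G,q)$ are linearly independent if and only if $E$ is independent in $M_2(G)$, i.e.\ if and only if $E$ can be written as the union of two forests of $G$. At this point the theorem follows from the Nash-Williams covering theorem \cite{NW}, which the excerpt has already recalled: $E(G)$ can be covered by two forests if and only if every non-empty $X\subseteq V$ induces at most $2|X|-2$ edges, i.e.\ $i_G(X)\le 2|X|-2$ for all $\es\neq X\subseteq V$.

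The one point that needs a little care is that Whiteley's representation result is usually stated for the \emph{independence} of a \emph{subset} of rows, whereas here we want independence of \emph{all} the rows of $I(G,q)$. But this is immediate: a set of rows of a matrix is linearly independent iff the corresponding subset of the ground set is independent in the represented matroid, so taking the full ground set $E$ gives that all rows of $I(G,q)$ are independent iff $E$ is independent in $M_2(G)$. I would also remark that genericity of $q$ is exactly what guarantees that $I(G,q)$ faithfully represents $M_2(G)$ rather than some matroid of smaller rank (for special $q$ extra dependencies can appear); no further genericity hypothesis on anything else is needed.

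The main obstacle, such as it is, is simply assembling the two black boxes correctly and making sure the combinatorial translation is stated in the exact form needed later in the paper: namely, that independence of the rows of $I(G,q)$ is governed by the count $i_G(X)\le 2|X|-2$. There is no real difficulty here — the substance is entirely in the cited results of Whiteley and Nash-Williams — so the proof will be short: apply \cite{W2} to identify the matroid, apply \cite{NW} to characterise its independent sets, and specialise to the full ground set.

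\bigskip
\noindent{\bf Proof.}
By \cite{W2}, since $q$ is generic, $I(G,q)$ is a linear representation of $M_2(G)$, the union of two copies of the cycle matroid of $G$. Hence the rows of $I(G,q)$ are linearly independent if and only if $E$ is independent in $M_2(G)$, i.e.\ $E$ can be partitioned into two forests of $G$. By the result of Nash-Williams \cite{NW}, $E$ can be covered by two forests if and only if $i_G(X)\leq 2|X|-2$ for every non-empty $X\subseteq V$. Combining these two equivalences gives the claim.
\hfill $\bullet$\\
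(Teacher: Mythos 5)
Your proposal is correct and follows exactly the route the paper itself indicates: the paper states Theorem \ref{frame} as a direct consequence of Whiteley's result that $I(G,q)$ represents $M_2(G)$ for generic $q$, combined with the Nash-Williams characterisation of independence in the union of two cycle matroids. No further comment is needed.
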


We can use this result to show that a  graph $G=(V,E)$ satisfying
$i_G(X)\leq 2|X|-3$ for all $X\subseteq V$ with $|X|\geq 2$ can be
realised as a direction-pure framework with a specified
algebraically independent set of slopes for its edges, and that this
realisation is unique up to translation and dilation when
$|E|=2|V|-3$.
Note that
given any realisation of $G$, we can always translate a specified
vertex $z_0$ to $(0,0)$ and dilate to arrange any specified distance
$t$ between a specified pair of vertices $x,y$.

\begin{theorem}\label{realise1}
Let $G=(V,E)$ be a graph such that $i_G(X)\leq 2|X|-3$ for all
$X\subseteq V$ with $|X|\geq 2$. Let $s$ be an injection from $E$ to
$\real$ such that $\{s_e\}_{e\in E}$ is generic.
Suppose $x_0,y_0,z_0\in V$ and $t\neq 0$ is a real number.
Then there exists an injection $p:V\to \real^2$ such that $\|p(x_0)-p(y_0)\|=t$,
$p(z_0)=(0,0)$ and, for all $e=uv\in E$, $p(u)-p(v) \in \langle(1,s_e)\rangle$.
Furthermore, if $|E|=2|V|-3$, then $p$ is unique up to dilation by $-1$ through $(0,0)$.
\end{theorem}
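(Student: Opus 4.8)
The plan is to set up a system of linear equations whose solutions correspond exactly to the desired realisations, and then use Theorem \ref{frame} (via the incidence matrix of a suitable frame) to control the solution space. First I would replace each slope $s_e$ by the direction vector $q(e)=(1,s_e)$ and consider the frame $(G,q)$; a map $p:V\to\real^2$ satisfies $p(u)-p(v)\in\langle(1,s_e)\rangle$ for every edge $e=uv$ precisely when, for each edge, $p(u)-p(v)$ is orthogonal to the normal $q^{\perp}(e)=(s_e,-1)$. Writing these orthogonality conditions out, the vector $p\in\real^{2|V|}$ (viewed as a concatenation of the $p(v)$) must lie in the null space of the matrix $I(G,q^{\perp})$, i.e.\ the incidence matrix of the frame $(G,q^{\perp})$ where $q^{\perp}$ sends $e$ to the normal of its prescribed direction. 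Since $\{s_e\}_{e\in E}$ is generic, $q^{\perp}$ is generic, so Theorem \ref{frame} applies: the rows of $I(G,q^{\perp})$ are linearly independent if and only if $i_G(X)\le 2|X|-2$ for all nonempty $X$, which is guaranteed (indeed with room to spare) by the hypothesis $i_G(X)\le 2|X|-3$ for $|X|\ge2$. Hence $\rank I(G,q^{\perp})=|E|$ and $\dim\Ker I(G,q^{\perp})=2|V|-|E|\ge 2|V|-(2|V|-3)=3$ — this uses that $G$, being sparse, has at most $2|V|-3$ edges (and one must momentarily check the degenerate cases where $|V|\le 2$, which are trivial).

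Next I would identify the kernel concretely. The constant maps $p(v)\equiv c$ for $c\in\real^2$ always lie in $\Ker I(G,q^{\perp})$ (they give $p(u)-p(v)=0\in\langle(1,s_e)\rangle$), contributing a $2$-dimensional translation subspace. I claim the kernel is at least $3$-dimensional and contains a map $p^{*}$ that is injective and in "general position" — for instance, the obvious candidate is to realise $G$ by a generic affine picture with the prescribed directions; concretely, pick $p^{*}(v)=\lambda(v)\cdot(1,0)$ won't do since it collapses non-vertical constraints, so instead one argues abstractly: any element of $\Ker I(G,q^{\perp})$ not in the translation subspace must assign distinct positions to the endpoints of each edge (otherwise that edge's constraint is vacuous and we could use genericity of the slopes to derive a rank contradiction), and a generic such element is injective on $V$ because two vertices $u,v$ with $p^{*}(u)=p^{*}(v)$ would impose a proper linear condition violated by a generic kernel vector. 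Combining the translation subspace with one such $p^{*}$, and then translating so that $p(z_0)=(0,0)$ (i.e.\ subtracting the constant $p^{*}(z_0)$) and scaling by $t/\|p^{*}(x_0)-p^{*}(y_0)\|$ (legitimate since $x_0\ne y_0$ forces $p^{*}(x_0)\ne p^{*}(y_0)$ by injectivity), we obtain the required $p$ with $\|p(x_0)-p(y_0)\|=t$, $p(z_0)=(0,0)$, and all the direction constraints satisfied.

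Finally, for the uniqueness statement when $|E|=2|V|-3$: here $\dim\Ker I(G,q^{\perp})=2|V|-(2|V|-3)=3$ exactly, so the kernel is spanned by the two translations together with any single non-constant solution $p^{*}$. Thus every solution has the form $p=\alpha\, p^{*}+\beta$ for a scalar $\alpha\ne0$ (we need $\alpha\ne0$, else $p$ is constant and cannot satisfy $\|p(x_0)-p(y_0)\|=t\ne0$) and a translation $\beta\in\real^2$. The constraint $p(z_0)=(0,0)$ pins down $\beta=-\alpha\,p^{*}(z_0)$, and then $\|p(x_0)-p(y_0)\|=|\alpha|\cdot\|p^{*}(x_0)-p^{*}(y_0)\|=t$ determines $|\alpha|$, leaving exactly the two choices $\pm\alpha$, which differ by dilation by $-1$ through the origin. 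I expect the main obstacle to be the clean verification that a generic kernel vector of $I(G,q^{\perp})$ is injective on $V$ and assigns non-zero difference to every edge — this requires a short argument combining the genericity of the slopes with the sparsity bound $i_G(X)\le 2|X|-3$, essentially showing that any would-be degeneracy corresponds to a subset $X$ with too many edges or to an algebraic dependence among the $s_e$ that genericity forbids. Everything else is linear algebra plus bookkeeping with translation and scaling.
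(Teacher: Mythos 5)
Your overall strategy is the same as the paper's: encode the direction constraints as the null space of the incidence matrix of a frame whose edge vectors are normal to the prescribed directions, apply Theorem \ref{frame} to get rank $|E|$ and hence a kernel of dimension $2|V|-|E|\ge 3$, pick an injective kernel vector, normalise by translation and scaling, and, when $|E|=2|V|-3$, observe that the kernel is exactly $3$-dimensional and is spanned by the two coordinate translations together with one non-constant solution, which forces $p'\in\{p,-p\}$. The uniqueness part of your argument is complete and matches the paper.

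The gap is in the injectivity step, which you yourself flag as the main obstacle but do not actually close. There are two problems. First, the claim that ``any element of $\Ker I(G,q^{\perp})$ not in the translation subspace must assign distinct positions to the endpoints of each edge'' is false: on a path $u$--$v$--$w$ a kernel vector can satisfy $p(u)=p(v)\ne p(w)$, so it is non-constant yet collapses the edge $uv$, and the existence of one such vector yields no rank contradiction. Second, and more importantly, to conclude that a generic kernel vector is injective you need that for \emph{every} pair $x,y\in V$ (not just edges) the set of kernel vectors with $p(x)=p(y)$ is a \emph{proper} subspace of the kernel $Z_G$ of $I(G,q|_E)$, i.e.\ that some kernel vector separates $x$ and $y$; you assert this properness but give no argument. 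The paper supplies exactly the missing step: add an auxiliary edge $f=xy$ to form $H=G+f$, note that $i_H(X)\le i_G(X)+1\le 2|X|-2$ so Theorem \ref{frame} applies to a generic frame on $H$ and gives $\rank I(H,q)=|E|+1=\rank I(G,q|_E)+1$, hence $\dim Z_G=\dim Z_H+1$ and any $p_f\in Z_G\sm Z_H$ satisfies $p_f(x)\ne p_f(y)$; a suitable (generic) linear combination of these $p_f$ over all pairs then separates all pairs simultaneously. Note that this is precisely where the full strength of the hypothesis $i_G(X)\le 2|X|-3$ (rather than $2|X|-2$) is used; your write-up treats that margin as slack (``with room to spare''), which signals that the argument as written does not invoke the hypothesis at the point where it is actually needed.
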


\bproof
We will construct $p$ as a combination of vectors in the nullspaces of certain frames.
First consider a generic frame $q$ on $G$ such that
$q(e)$ is a scalar multiple of $(-s_e,1)$ for every $e \in E$.
Then for any $p$ in the nullspace of $I(G,q)$
and $e=uv \in E$ we have $p(u)-p(v) \in \langle(1,s_e)\rangle$.
However, $p$ need not be injective. To address this issue,
we instead choose a pair of vertices $x,y\in V$, and consider
the graph $H$ obtained by adding the edge $f=xy$ to $G$
(which may be parallel to an existing edge).
Now let $(H,q)$ be a generic frame such that $q(e)$ is a scalar
multiple of $(-s_e,1)$ for every edge $e$ of $G$, and $q(f)$ is chosen arbitrarily (subject to the condition that $q$
should be generic).
For all $X\subseteq V$ with $|X|\geq 2$, we have
$i_H(X)\leq i_G(X)+1\leq 2|X|-2$ by hypothesis.
Theorem \ref{frame} now implies that the incidence matrix $I(H,q)$
has  linearly independent rows.
Thus $\rank I(H,q)=\rank I(G,q|_E)+1$.
Writing $Z_H$ for the null space of $I(H,q)$
and $Z_G$ for the null space of $I(G,q|_E)$,
we have $\dim Z_G = \dim Z_H+1$,
so we can choose $p_{f} \in Z_G \setminus Z_H$.
Then we necessarily have $p_f(x)\neq p_f(y)$.
Taking a suitable linear combination of the vectors $p_{f}$,
for all possible new edges $f=xy$, $x,y\in V$, we may
construct a vector $p$ in $Z_G$ with $p(x)\neq p(y)$ for all $x,y\in V$.
Since $p_f(u)-p_f(v) \in \langle(1,s_e)\rangle$ for each $f$
we also have $p(u)-p(v) \in \langle(1,s_e)\rangle$.
Furthermore, as noted before the proof, we can translate
and dilate to satisfy the other conditions, thus constructing the required map $p$.

We next show uniqueness when $|E|=2|V|-3$.
We have $\dim Z_G=2|V|-\rank I(G,q|_E)=2|V|-|E|=3$.
Define $p_1,p_2:V\to\real^2$ by $p_1(v)=(1,0)$ and $p_2(v)=(0,1)$ for all $v\in V$.
Note that $p_1,p_2\in Z_G$. Also, $p,p_1,p_2$ are linearly independent,
since $p(z_0)=(0,0)$, $p_1(z_0)=(1,0)$ and $p_2(z_0)=(0,1)$,
so $\{p,p_1,p_2\}$ is a basis for $Z_G$.
Now suppose that $p':V\to \real^2$ has the properties described
in the first part of the lemma. Then $p'\in Z_G$ so
$p'=ap+bp_1+cp_2$ for some $a,b,c\in \real$. Since
$p'(z_0)=p(z_0)=(0,0)$ we have $b=c=0$. Since
$\|p'(x_0)-p'(y_0)\|=t=\|p(x_0)-p(y_0)\|$ we have $p'\in \{p,-p\}$.
\eproof

The uniqueness part of this lemma, together with Lemma \ref{qgen1}, gives the following two results of
Whiteley, and Servatius and Whiteley.

\begin{lemma}\label{direction}\cite{W}
Suppose that $(G,p)$ is a generic direction-pure framework.
Then $(G,p)$ is direction globally rigid if and only if it is direction-rigid.
\end{lemma}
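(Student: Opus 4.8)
The plan is to deduce Lemma \ref{direction} from the uniqueness clause of Theorem \ref{realise1} together with Lemma \ref{qgen1}. Let $(G,p)$ be a generic direction-pure framework. One direction is immediate: global rigidity trivially implies rigidity. For the converse, suppose $(G,p)$ is direction-rigid. I would first reduce to the minimally rigid case: passing to a minimally direction-rigid spanning subgraph $G'$ of $G$, any framework $(G,q)$ equivalent to $(G,p)$ restricts to a framework $(G',q)$ equivalent to $(G',p)$, so it suffices to prove $(G',p)$ is direction globally rigid. Thus we may assume $E=2|V|-3$ and $G$ is independent, whence $i_G(X)\le 2|X|-3$ for all $X$ with $|X|\ge 2$ by the Servatius--Whiteley count (using that $G$ is direction-pure, so the inequality is strict).

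Next I would set up the slopes. Since $G$ is independent, Lemma \ref{qgen1} tells us that $f_G(p)$ — the tuple of edge slopes $s_e = s_p(e)$ — is generic, i.e.\ $\{s_e\}_{e\in E}$ is algebraically independent over $\rat$. (Here I'd note that after a generic rotation we may assume $(G,p)$ has no vertical edges, or simply observe genericity of $p$ forces this; this is a routine point.) Now let $(G,q)$ be any framework equivalent to $(G,p)$: equivalence of direction-pure frameworks means $q(u)-q(v)$ is a scalar multiple of $p(u)-p(v)$, hence lies in $\langle (1,s_e)\rangle$, for every edge $e=uv$ (the edge count rules out $q(u)=q(v)$ on a spanning minimally rigid graph, since collapsing an edge would violate independence). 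After translating so that $q(z_0)=(0,0)=p(z_0)$ for a fixed vertex $z_0$, and picking any edge $x_0y_0$ and writing $t=\|p(x_0)-p(y_0)\|$, both $p$ and a suitable rescaling $q'$ of $q$ satisfy the hypotheses of the ``furthermore'' clause of Theorem \ref{realise1}. That clause gives $q'\in\{p,-p\}$, i.e.\ $q$ is a dilation of $p$ — which is exactly the notion of congruence for direction-pure frameworks. Hence $(G,p)$, and therefore the original framework, is direction globally rigid.

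The main obstacle is bookkeeping rather than depth: one has to handle the dilation degree of freedom correctly (direction-pure congruence allows arbitrary dilations, not just $\pm 1$, so the rescaling step before invoking Theorem \ref{realise1} must be done explicitly), verify that equivalent frameworks cannot collapse an edge or a vertex subset so that the slopes really are defined and match, and confirm that the genericity of the slopes extracted via Lemma \ref{qgen1} is precisely the ``$\{s_e\}_{e\in E}$ generic'' hypothesis of Theorem \ref{realise1}. The reduction to a minimally rigid spanning subgraph must also be checked to preserve the property that every equivalent realisation of the big graph restricts to an equivalent realisation of the subgraph — this is immediate since equivalence is an edgewise condition.
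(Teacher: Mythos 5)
Your proposal is correct and takes essentially the same route as the paper: the paper derives Lemma~\ref{direction} precisely by combining the uniqueness clause of Theorem~\ref{realise1} with Lemma~\ref{qgen1}, leaving implicit the bookkeeping (reduction to a minimally rigid spanning subgraph, translation/rescaling, genericity of the slope tuple) that you spell out. The one point both you and the paper gloss over is the treatment of degenerate equivalent realisations with coincident vertices, which is a matter of how direction-pure congruence is conventioned rather than a gap in the argument.
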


\begin{lemma}\label{special}\cite{SW}
Suppose that $(G,p)$ is a generic realisation of a mixed graph $G=(V;D,L)$.
If $G$ is rigid and $|L|=1$ then $(G,p)$ is globally rigid.
\end{lemma}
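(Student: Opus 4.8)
The idea is to feed everything into the uniqueness clause of Theorem~\ref{realise1}: the unique length edge fixes a scale factor, and the directions of a minimally rigid spanning subgraph fix everything else, so that any framework equivalent to $(G,p)$ must coincide with it up to a half-turn about the origin. First I would normalise: translation preserves global rigidity, so I may assume $p(z_0)=(0,0)$ for a fixed vertex $z_0$, making $(G,p)$ quasi-generic in standard position. Let $f=xy$ be the unique length edge and let $(G,q)$ be an arbitrary framework equivalent to $(G,p)$; after translating $q$, I may assume $q(z_0)=(0,0)$ as well. Since $G$ is rigid, Lemma~\ref{equiv} gives that $(G,q)$ is quasi-generic too. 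Quasi-genericity of $p$ and of $q$ forces both point maps to be injective and ensures that no direction edge of $G$ is vertical in either framework; consequently, for each $e=uv\in D$ the equivalence condition says $q(u)-q(v)$ is a (nonzero, since $q$ is injective) scalar multiple of $p(u)-p(v)$, and as $p(u)-p(v)$ is a nonzero non-vertical vector we get $s_q(e)=s_p(e)$; also $\|q(x)-q(y)\|=\|p(x)-p(y)\|=:t>0$.

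Next I would pass to a minimally rigid spanning subgraph $G'$ of $G$. By the Servatius--Whiteley count a direction-pure graph has rigidity rank at most $2|V|-3$, so $G'$ cannot be direction-pure; since $|L|=1$ this forces $f\in E(G')$, say $G'=(V;D',\{f\})$ with $|D'|=2|V|-3$. As $D'$ is independent and direction-pure, the Servatius--Whiteley count also yields $i_{G'}(X)\le 2|X|-3$ for all $X\subseteq V$ with $|X|\ge 2$, so the direction-pure graph $G'_D=(V;D',\es)$ satisfies the hypothesis of Theorem~\ref{realise1}, with $|D'|=2|V(G'_D)|-3$. Applying Lemma~\ref{qgen1} to the independent graph $G'$ and the quasi-generic configuration $p$, the vector $f_{G'}(p)$ is generic; its entries are precisely the slopes $s_p(e)$ for $e\in D'$ together with $l_p(f)$, so in particular $\{s_p(e)\}_{e\in D'}$ is a generic --- hence injective --- family of slopes.

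Finally I would invoke uniqueness. Write $\phi$ for either of the configurations $p$ or $q$: in both cases $\phi$ is an injection $V\to\real^2$ with $\phi(z_0)=(0,0)$, $\|\phi(x)-\phi(y)\|=t$, and $\phi(u)-\phi(v)\in\langle(1,s_p(e))\rangle$ for every $e=uv\in D'$ (for $q$ this last point uses $s_q(e)=s_p(e)$ together with the injectivity and non-verticality established above). Since $|D'|=2|V(G'_D)|-3$, the uniqueness part of Theorem~\ref{realise1} forces $q\in\{p,-p\}$, and in either case $q$ is congruent to $p$ --- by the identity translation when $q=p$, and by the $180^\circ$ rotation about the origin when $q=-p$. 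Hence $(G,p)$ is globally rigid. I do not expect a genuine obstacle here; the two points needing care are the genericity transfers --- that the equivalence together with quasi-genericity of $q$ (Lemma~\ref{equiv}) yields $s_q(e)=s_p(e)$ and injectivity of $q$, and that the minimally rigid spanning subgraph must contain the length edge, so that Theorem~\ref{realise1} applies to a direction-pure graph of exactly the right size $2|V|-3$.
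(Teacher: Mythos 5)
Your proposal is correct and is exactly the argument the paper has in mind: the paper does not write out a proof of Lemma~\ref{special}, but justifies it with the single remark that it follows from the uniqueness clause of Theorem~\ref{realise1} together with Lemma~\ref{qgen1}, which is precisely the route you take (passing to a minimally rigid spanning subgraph, which must contain the unique length edge, using Lemma~\ref{qgen1} to get genericity of the slopes, and Lemma~\ref{equiv} plus quasi-genericity to rule out degenerate direction constraints before invoking uniqueness). Your write-up supplies the details the paper leaves implicit, and I see no gaps.
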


\section{Direction reduction}\label{sec:dirred}

In this section we prove Theorem \ref{reduce}. We also prove a lemma
which determines when a rigid direction-independent mixed graph is
direction reducible. For the proof of Theorem \ref{reduce} we deal
with the two reduction operations separately.

\begin{lemma}\label{Dind}
Suppose $(G,p)$ is a generic realisation of a mixed graph
$G=(V;D,L)$ and that $e=uv\in D$ belongs to a direction-pure
$M$-circuit $H=(U;F,\es)$ of $G$.
Then $(G,p)$ is globally rigid if and only if
$(G-e,p)$ is globally rigid.
\end{lemma}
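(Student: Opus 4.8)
The plan is to show that the framework space of $(G,p)$ and that of $(G-e,p)$ coincide, so that one is globally rigid exactly when the other is. Fix a vertex $v_0$ and work with frameworks in standard position with $v_0$ at the origin. One direction is trivial: if $(G-e,p)$ is globally rigid then any $(G,q)$ equivalent to $(G,p)$ is in particular equivalent to $(G-e,p)$, hence $q$ is congruent to $p$, so $(G,p)$ is globally rigid. For the converse I want to argue that every $(G-e,q)$ equivalent to $(G-e,p)$ is automatically an equivalent realisation of all of $G$, i.e.\ it also satisfies the direction constraint imposed by $e=uv$. The key point is that $H=(U;F,\es)$ is a direction-pure $M$-circuit containing $e$, so $F-e$ is independent and direction-rigid on $U$ (since a direction-pure circuit has $|F|=2|U|-2$, deleting $e$ leaves $2|U|-3$ independent direction edges, which is direction-rigid).

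First I would record that $H-e=(U;F-e,\es)$ is a minimally direction-rigid (direction-pure) graph, and that $(H-e,p|_U)$ is generic, hence direction-rigid, hence by Lemma \ref{direction} direction-globally-rigid. Now suppose $(G-e,q)$ is equivalent to $(G-e,p)$. The restriction $(H-e, q|_U)$ is then equivalent, as a direction-pure framework, to $(H-e, p|_U)$: all the direction edges of $F-e$ impose the same slope constraints (note that equivalence of direction-length frameworks forces $q(x)-q(y)$ to be a scalar multiple of $p(x)-p(y)$ for direction edges, which is exactly direction-equivalence). Since $(H-e,p|_U)$ is direction-globally-rigid, $q|_U$ is obtained from $p|_U$ by a translation and a dilation; in particular the direction of $q(u)-q(v)$ equals the direction of $p(u)-p(v)$ (a dilation, even by a negative scalar, preserves the set of slopes). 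Therefore $(G-e,q)$ also satisfies the direction constraint of $e$, so $(G,q)$ is equivalent to $(G,p)$. Applying global rigidity of $(G,p)$ gives that $q$ is congruent to $p$, which proves $(G-e,p)$ is globally rigid.

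The one technical subtlety I should be careful about is genericity of the relevant sub-frameworks and the use of Lemma \ref{direction}: Lemma \ref{direction} is stated for generic direction-pure frameworks, and $p|_U$ is generic because $p$ is, so $(H-e,p|_U)$ is a generic direction-pure framework and the lemma applies directly. I should also note that $H-e$ is indeed direction-rigid: this is where the hypothesis that $H$ is a direction-pure $M$-circuit is used, via the Servatius--Whiteley count $|F|=2|U|-2$ recalled in Section \ref{sec:rigid}, so that $F-e$ has $2|U|-3$ edges and is independent, i.e.\ is a basis of the direction-rigidity matroid on $U$. The main obstacle, such as it is, is making sure the notion of ``equivalence'' restricts correctly from the mixed framework $(G-e,q)$ to the pure framework $(H-e,q|_U)$ — but this is immediate from the definition of equivalence for direction edges — and then correctly translating ``direction-globally-rigid'' (congruence up to translation and dilation) back into the statement that the slope of the edge $e$ is preserved. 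Once that is in place, the argument is essentially a one-line application of Lemma \ref{direction} plus the definition of global rigidity.
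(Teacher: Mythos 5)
Your proposal is correct and follows essentially the same route as the paper's proof: both deduce that $H-e$ is direction-rigid from the circuit count, invoke Lemma \ref{direction} to get global direction-rigidity of $(H-e,p|_U)$, conclude that the slope of $e$ is forced in any framework equivalent to $(G-e,p)$, and then apply the global rigidity of $(G,p)$. The extra details you supply (genericity of $p|_U$, the restriction of equivalence to the pure subframework) are correct and only make explicit what the paper leaves implicit.
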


\bproof
If $(G-e,p)$ is globally rigid then $(G,p)$ is clearly globally rigid.
Conversely, suppose that $(G,p)$ is globally rigid and $(G-e,q)$ is equivalent to $(G-e,p)$.
Since $H$ is a direction-pure circuit,
both $(H,p|_U)$ and $(H-e,p|_U)$ are direction-rigid.
Hence $(H-e,p|_{U})$ is globally direction-rigid by Lemma \ref{direction}.
Thus $q(u)-q(v)$ is a scalar multiple of $p(u)-p(v)$, and hence $(G,q)$ is equivalent to $(G,p)$.
Since $G$ is globally rigid, $q$ is congruent to $p$.
This shows that $(G-e,p)$ is globally rigid.
\eproof

\begin{lemma}\label{2tree}
Let $(G,p)$ be a quasi-generic realisation of a rigid
mixed graph $G=(V;D,L)$. Suppose that $G$ has a proper induced
subgraph $H=(U;F,L)$ such that
the graph $G/H$ obtained by contracting $H$ to a single vertex (deleting all edges
contained in $H$ and keeping all other edges, possibly as parallel edges) has only direction edges and is
the union of two edge-disjoint spanning trees. Then  $(G,p)$ is
globally rigid if and only if $(H,p|_H)$ is globally rigid.
\end{lemma}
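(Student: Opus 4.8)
My plan is to introduce an auxiliary minimally rigid graph on the vertex set $V$ and to play it off against both $G$ and $H$. Write $W=V\sm U$ and let $\Gamma$ be the spanning subgraph of $G$ whose edge set is the set of all edges of $G$ incident with $W$. Since $G/H$ is direction-pure every edge of $\Gamma$ is a direction edge, and since $G/H$ is the union of two edge-disjoint spanning trees of its $(|W|+1)$-vertex graph, $|E(\Gamma)|=|E(G/H)|=2|W|$. First I would note that $H$ is rigid: adjoining the $2|W|$ edges of $\Gamma$ (and the $|W|$ vertices of $W$) to $H$ raises the rank of the rigidity matroid by at most $2|W|$, so $2|V|-2=r(G)\le r(H)+2|W|$ forces $r(H)=2|U|-2$; in particular $|U|\ge 2$, for otherwise $G=G/H$ would be direction-pure and hence not rigid. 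As global rigidity and rigidity matrices are unaffected by translation I may also assume that $p$ is generic. Now fix, via the Servatius and Whiteley construction, a minimally rigid mixed graph $H'$ with vertex set $U$ and exactly one length edge, and let $G'$ be obtained from $G$ by substituting $H$ by $H'$, so that $E(G')=E(\Gamma)\cup E(H')$ (a disjoint union). By Lemma \ref{subrigid}, $G'$ is rigid; it has exactly one length edge and exactly $(2|U|-2)+2|W|=2|V|-2$ edges, hence is minimally rigid, so $(G',p)$ is globally rigid by Lemma \ref{special}.

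The ``if'' direction is then immediate: $G$ is obtained from $G'$ by substituting $H'=G'[U]$ by $H$, so since $(G',p)$ and $(H,p|_U)$ are both globally rigid, Lemma \ref{replace} shows that $(G,p)$ is globally rigid. For the converse the key fact I would establish is that the pinned matrix $R(\Gamma,p)_U$ — obtained from the $2|W|\times 2|V|$ matrix $R(\Gamma,p)$ by deleting the two columns of each vertex of $U$ — is a nonsingular $2|W|\times 2|W|$ matrix. To prove this, suppose $z_W$ lies in its kernel and extend $z_W$ to a map $z\colon V\to\real^2$ by setting $z$ equal to $0$ on $U$; then $R(\Gamma,p)z=0$, and also $R(H',p)z=0$ because every edge of $H'$ joins two vertices of $U$, where $z$ vanishes. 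Hence $z\in Z(G',p)$; but $(G',p)$ is infinitesimally rigid, so $Z(G',p)$ consists only of translations, and as $z$ vanishes on the non-empty set $U$ this forces $z=0$. Thus $R(\Gamma,p)_U$ has trivial kernel and is invertible.

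Given this, suppose $(G,p)$ is globally rigid and let $(H,\tilde q)$ be any framework equivalent to $(H,p|_U)$. I would extend $\tilde q$ to a realisation $\bar q$ of $G$ by setting $\bar q|_U=\tilde q$ and taking $\bar q|_W$ to be the unique solution of the linear system stating that, for every edge $xy$ of $\Gamma$, the vector $\bar q(x)-\bar q(y)$ is a scalar multiple of $p(x)-p(y)$; assembling these equations gives $R(\Gamma,p)_U\,\bar q|_W=-C\tilde q$, where $C$ is the submatrix of $R(\Gamma,p)$ on the $U$-columns, and this has a unique solution because $R(\Gamma,p)_U$ is invertible. By construction $(G,\bar q)$ is equivalent to $(G,p)$: on the edges of $H$ this is exactly the assumption on $\tilde q$, and on the direction edges of $\Gamma$ it holds by the defining equations. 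Global rigidity of $(G,p)$ then gives that $\bar q$ is congruent to $p$, and restricting to $U$ shows that $\tilde q=\bar q|_U$ is congruent to $p|_U$; hence $(H,p|_U)$ is globally rigid.

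I expect the invertibility of $R(\Gamma,p)_U$ to be the only non-routine step. A direct combinatorial proof — say, inducting on $|W|$ after peeling off a vertex of $W$ that is a leaf of both of the two spanning trees comprising $G/H$ — does not work, since no such vertex need exist (for instance when $G/H$ is $K_4$); this is exactly why I route the argument through the minimally rigid graph $G'=\Gamma\cup H'$, where infinitesimal rigidity forces the relevant kernel to be trivial.
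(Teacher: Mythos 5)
Your proof is correct, and while the ``if'' direction coincides with the paper's (substitute $H$ by a minimally rigid $H'$ with one length edge, then apply Lemmas \ref{subrigid}, \ref{special} and \ref{replace}), your ``only if'' direction takes a genuinely different and more elementary route. The paper constructs the equivalent framework $(G,q)$ extending $\tilde q$ by invoking Theorem \ref{realise1}, which requires first proving that the mixed slope/length vector combining $f_{H'}(\tilde q)$ with the slopes of $p$ on $D\sm F$ is generic; that step is carried out with transcendence-degree bookkeeping via Corollary \ref{qgen3} and Lemma \ref{equiv}. You instead exploit the fact that the direction constraints on the edges incident with $W=V\sm U$ are \emph{linear} in $q$, so extending $\tilde q$ to $W$ amounts to solving the square system $R(\Gamma,p)_U\,\bar q|_W=-C\tilde q$; the invertibility of $R(\Gamma,p)_U$ follows cleanly from the infinitesimal rigidity of the generic minimally rigid auxiliary graph $G'=\Gamma\cup H'$, since a null vector vanishing on $U$ would be a translation vanishing at a vertex. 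This buys a shorter, self-contained argument that avoids Theorem \ref{realise1} and all the field-extension machinery, and it needs no genericity of the constructed $\bar q$ (which is indeed never used in the paper's proof of this lemma either). What the paper's route buys is uniformity: the same Theorem \ref{realise1} construction and genericity argument are reused essentially verbatim in Lemma \ref{lem:-p} and Theorem \ref{vspecial}, where a linear pinned-system argument would not suffice because the data being prescribed there include a length and a continuously rotating family of slopes. Your auxiliary observations ($H$ is rigid by the rank count, $|U|\ge 2$, $G'$ minimally rigid with $2|V|-2$ edges) are all sound and match the paper's.
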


\bproof First suppose that $(H,p|_H)$ is globally rigid. Let $G'$
be constructed from $G$ by substituting $H$ by a minimally rigid graph $H'$
with exactly one length edge. Then $G'$ is rigid by Lemma
\ref{subrigid}. Since $G'$
is rigid and has exactly one
length edge, $(G',p)$
is globally rigid by
Lemma \ref{special}. Thus $(G,p)$ is globally rigid by Lemma
\ref{replace}.

Conversely, suppose that $(H,p|_H)$  is not globally rigid. Then there
exists an equivalent but non-congruent framework $(H,\tilde q)$.
Without loss of generality we may suppose that
$p(u)=(0,0)=\tilde q(u)$ for some $u\in V(H)$.
Let $D^*= D\sm F$ be the set of edges of $G/H$ and $m$ be the number of
vertices of $G/H$. Then $|D^*|=2m-2$, as $G/H$ is the union of two edge-disjoint
spanning trees. Since $G$ is rigid we have
$$2|V|-2=r(G)\leq |D^*|+r(H)\leq 2m-2+2|V(H)|-2=2|V|-2.$$
Thus equality must hold throughout. In particular, $r(H)=2|V(H))|-2$,
so $H$ is rigid.

We again consider the rigid mixed graph $G'=(V,D',L')$ with exactly one length edge defined in the first paragraph of the proof.
Since $G'$
has $|D^*|+2|V(H)|-2=2|V|-2$ edges, it is minimally rigid.
We will construct a framework $(G,q)$ which is equivalent
to $(G,p)$  and has $q|_H=\tilde q$ by applying Theorem \ref{realise1} to $G'$.

Define $s:D'\cup L'\to \real$ by $s(e)=s_{\tilde q}(e)$ for
$e\in F'$, $s(e)=l_{\tilde q}(e)$ for $e\in L'$, and $s(e)=s_p(e)$
for $e\in D^*$. We will use Theorem \ref{realise1} to construct a framework $(G',q)$ such that
$s_q(e)=s(e)$  for all edges $e$ of $G'$.
To do this, we
first need to show that $s|_{D'}$ is generic.
We will prove the stronger result that  $s$ is generic by showing
that $td[\ov{\mb{Q}(s)}:\mb{Q}]=|D'|+|L'|=2|V|-2$. We have
$td[\ov{\mb{Q}(p)}:\mb{Q}]=2|V|-2$, as $p$ is quasi-generic and $p(u)=(0,0)$, so it
suffices to prove that $\ov{\mb{Q}(s)}=\ov{\mb{Q}(p)}$. Since $G$ is
rigid, Corollary \ref{qgen3} gives
$\ov{\mb{Q}(f_G(p))}=\ov{\mb{Q}(p)}$. Also, $s$ is obtained from
$f_G(p)$ by replacing the values $f_H(p|_U)$ by the values
$f_{H'}(\tilde q)$, so we need to show that these generate the same
algebraic closure over $\mb{Q}$. Since $(H,\tilde q)$ is equivalent
to $(H,p|_U)$, Lemma \ref{equiv} gives $\ov{\mb{Q}(\tilde
q)}=\ov{\mb{Q}(p|_U)}$. Since $p|_U$ is quasi-generic, it follows
that $\tilde q$ is quasi-generic. Then, since $H$ and $H'$ are
rigid, two applications of Corollary \ref{qgen3}, give
$\ov{\mb{Q}(f_H(p|_U))}=\ov{\mb{Q}(p|_U)}$ and
$\ov{\mb{Q}(f_{H'}(\tilde q))}=\ov{\mb{Q}(\tilde q)}$. Putting these
three equalities together gives
\[\ov{\mb{Q}(f_{H'}(\tilde q))}=\ov{\mb{Q}(\tilde q)}
=\ov{\mb{Q}(p|_U)}=\ov{\mb{Q}(f_H(p|_U))},\] which is what we needed
to prove $\ov{\mb{Q}(s)}=\ov{\mb{Q}(p)}$. Therefore $s$ is generic.
Now we can apply  Theorem \ref{realise1}, with $x_0y_0$ equal to the
unique length edge of $G'$, to obtain a realisation $(G',q)$ with
$f_{G'}(q)=s$. By construction $(H',q|_{U})$ is equivalent to
$(H',\tilde q)$. But $H'$ is globally rigid by Lemma \ref{special},
so $q|_{U}$ is congruent to $\tilde q$. Hence we can apply a
translation, and possibly a dilation by $-1$, to obtain $q|_U=\tilde
q$.

Since $(H,\tilde q)$ is equivalent  to $(H,p|_H)$ and $s_q(e)=s(e)=s_p(e)$ for all $e\in D^*$, $(G,q)$
is equivalent to $(G,p)$ and satisfies $q|_U=\tilde q$.  Since $(H,\tilde q)$ is not congruent to
$(H,p|_U)$, $(G,q)$ is not congruent to $(G,p)$. Thus  $(G,p)$ is not globally rigid. \eproof

Theorem \ref{reduce} follows immediately from Lemmas \ref{Dind} and \ref{2tree}.
We close this section with a result which determines when a rigid, direction-independent mixed graph $G$ is direction reducible to a given subgraph $H$.
This lemma will be used in Section \ref{sec:con} to give an algorithm for finding a direction reduction.

\begin{lemma}\label{dindred}
Suppose $G=(V;D,L)$ is a rigid, direction-independent mixed graph and
$H=(V';D',L')$ is an induced proper subgraph of $G$.
Then $G$ is direction reducible to $H$
if and only if $L'=L$ and $|D\sm D'|=2|V\sm V'|$.
\end{lemma}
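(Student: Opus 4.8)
The plan is to characterise the direction reductions available to $G$. Since $G$ is rigid and direction-independent, reduction operation (R1) is unavailable: a direction edge $e\in D$ lies in a direction-pure $M$-circuit only if $D$ contains a direction-pure circuit of the rigidity matroid, contradicting direction-independence. So $G$ is direction reducible to $H$ if and only if $H$ arises from operation (R2), i.e.\ $H$ is a proper induced subgraph of $G$ such that $G/H$ is direction-pure and is the union of two edge-disjoint spanning trees. The task is therefore to show that for an induced proper subgraph $H=(V';D',L')$, operation (R2) applies precisely when $L'=L$ and $|D\sm D'|=2|V\sm V'|$.

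First I would establish necessity. Suppose (R2) applies. That $G/H$ is direction-pure means every length edge of $G$ lies inside $H$, so $L'=L$. For the count, note that $G/H$ has vertex set of size $|V\sm V'|+1=:m$ and edge set $D\sm D'$ (the edges of $G$ not inside $H$, which are all direction edges); being the union of two edge-disjoint spanning trees forces $|D\sm D'|=2(m-1)=2|V\sm V'|$. Conversely, assume $L'=L$ and $|D\sm D'|=2|V\sm V'|$. Then $G/H$ is direction-pure (all length edges are in $L'$ hence inside $H$) and has exactly $2m-2$ edges on $m$ vertices, the right count for a union of two spanning trees. By the Nash-Williams theorem \cite{NW}, it suffices to verify that every non-empty vertex set of $G/H$ spans at most $2k-2$ edges; equivalently, that every non-empty $X\subseteq V$ with $X\not\subseteq V'$, or $X$ corresponding to the contracted vertex, induces at most $2|\hat X|-2$ edges of $G/H$, where $\hat X$ is the image of $X$. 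The key tool here is the Servatius--Whiteley characterisation of independence in $M(G)$: since $G$ is rigid with $r(G)=2|V|-2$ and direction-independent, the edge set $D\cup L$ satisfies $|F'|\le 2|V(F')|-2$ for all non-empty $F'$, with strict inequality for pure $F'$; applying this to the set of edges induced by $X\cup V'$ (which contains all of $H$'s edges plus those induced within the relevant vertex set) and subtracting off the $r(H)$ edges inside $H$ — using that $H$ is rigid, which follows from the rank count $2|V|-2=r(G)\le |D\sm D'|+r(H)\le 2|V\sm V'|+2|V'|-2=2|V|-2$ forcing equality throughout — yields the desired bound $2k-2$ on edges induced in $G/H$. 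The strictness of the pure inequality is what prevents the bad case where a subset would induce $2k-1$ edges.

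The main obstacle I anticipate is the sparsity verification for $G/H$: translating the Nash-Williams condition on subsets of $V(G/H)$ back into a statement about subsets of $V(G)$, and then extracting the needed inequality from the Servatius--Whiteley count applied to the union of $H$ with the relevant vertices. Care is required because a vertex subset of $G/H$ that contains the contracted vertex corresponds to $X\cup V'$ in $G$, and one must ensure the count of edges of $G/H$ induced there equals (edges of $G$ induced by $X\cup V'$) minus (edges of $H$), which relies on $H$ being an \emph{induced} subgraph and rigid so that it contributes exactly $r(H)=2|V'|-2$ edges in a minimally rigid spanning subgraph — alternatively one argues directly with the submodular rank function $r$ of $M(G)$, using $r(G[X\cup V'])\le r(G[X\cup V']) \le 2|X\cup V'|-2$ and $r(H)=2|V'|-2$. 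Once the correct edge-count identity is pinned down, the inequality is immediate from the rigidity-matroid sparsity bounds, and the proof concludes by invoking Theorem \ref{frame}-style reasoning via Nash-Williams, or rather Nash-Williams directly, to conclude $G/H$ decomposes into two edge-disjoint spanning trees.
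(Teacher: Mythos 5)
Your reduction to operation (R2), the exclusion of (R1) via direction-independence, and the ``only if'' direction are all correct and agree with the paper. The gap is in the ``if'' direction, at the sparsity verification for sets $\hat X$ containing the contracted vertex. You invoke the Servatius--Whiteley count in the form ``the edge set $D\cup L$ satisfies $|F'|\le 2|V(F')|-2$ for all non-empty $F'$, with strict inequality for pure $F'$'' --- but that characterises \emph{independent} edge sets, and $E(G)$ need not be independent here: a rigid, direction-independent graph can contain length-pure circuits (in the paper's main application every length edge lies in one), and then a length-pure $F'$ attains $|F'|=2|V(F')|-2$, while a denser length-pure subgraph (say a complete length-pure graph on five vertices inside $H$) violates even the non-strict bound. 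So the count cannot be applied to $E(G[X\cup V'])$. Your fallback via the rank function, $r(G[X\cup V'])\le 2|X\cup V'|-2$ together with $r(H)=2|V'|-2$, is sound as far as it goes, but to deduce that the set $F'$ of direction edges of $G/H$ induced by $\hat X$ satisfies $|F'|\le r(G[X\cup V'])-r(H)$ you need the \emph{lower} bound $r(G[X\cup V'])\ge r(H)+|F'|$, i.e.\ that $F'$ together with a basis $B$ of $E(H)$ is independent in $M(G)$; submodularity gives only the opposite inequality, and this is exactly the step your write-up omits. It is in fact true: the ``equality throughout'' in your own computation $2|V|-2=r(G)\le |D\sm D'|+r(H)\le 2|V|-2$ gives $r(E(H)\cup(D\sm D'))=r(E(H))+|D\sm D'|$, which forces $B\cup(D\sm D')$ to be independent for every basis $B$ of $E(H)$; restricting to $B\cup F'\subseteq E(G[X\cup V'])$ then yields $(2|V'|-2)+|F'|\le 2|X\cup V'|-2$, i.e.\ $|F'|\le 2|\hat X|-2$. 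Without that observation the argument does not close.

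For comparison, the paper sidesteps the issue by counting the complement. If some $X\subseteq V(G/H)$ with $|X|\ge 2$ induces at least $2|X|-1$ edges of $G/H$, then direction-independence forces $v_H\in X$ (as you note for sets avoiding $v_H$), and since $G/H$ has only $2|V(G/H)|-2$ edges in total, the edges of $G/H$ \emph{not} induced by $X$ number at most $2|V(G/H)\sm X|-1$; hence $r(G)\le r(G[V\sm(V(G/H)\sm X)])+2|V(G/H)\sm X|-1\le 2|V|-3$, contradicting the rigidity of $G$. This uses only the always-valid inequality $r(A\cup C)\le r(A)+|C|$ and requires no independence claim, which is why it avoids the trap your version falls into.
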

\bproof
Since $G$ is direction-independent, $G$ is direction reducible to $H$
if and only if $L'=L$ and the graph $F=(V'';D'',\es)$ obtained
by contracting $H$ to a single vertex $v_H$ is the union of two edge-disjoint spanning trees.
Thus, if $G$ is direction reducible to $H$, then we have
$L'=L$ and $|D\sm D'|=|D''|=2|V''|-2=2|V\sm V'|$.

We next assume that $L'=L$ and $|D\sm D'|=2|V\sm V'|$ and show that
$F$ is the union of two edge-disjoint spanning trees. Suppose not.
Then by a theorem of Nash-Williams \cite{NW}, there exists
$X\subseteq V''$ with $|X|\geq 2$ and $i_F(X)\geq 2|X|-1$. If
$v_H\not\in X$ then the fact that $G$ is direction-independent
implies that $i_F(X)=i_G(X)\leq 2|X|-3$. Thus $v_H\in X$. Since
$|D''|=|D\sm D'|=2|V''|-2$ there are at most
$(2|V''|-2)-(2|X|-1)=2|V''\sm X|-1$ edges in $F$ which are not
induced by $X$. It follows that
$$r(G)\leq r(G[V\sm (V''\sm X)])+(2|V''\sm X|-1)\leq (2|V\sm (V''\sm X)|-2)+(2|V''\sm X|-1)=2|V|-3.$$
This contradicts the hypothesis that $G$ is rigid. Thus $F$ is the union of two edge-disjoint spanning trees
and $G$ is direction reducible to $H$.
\eproof

\section{Direction irreducible mixed graphs}\label{sec:irreducible}

Theorem \ref{reduce} enables us to reduce the problem of
characterising globally rigid generic direction-length frameworks to
the case when the underlying graph is direction irreducible.
In this section we prove a structural lemma for direction irreducible mixed graphs
which have a globally rigid generic realisation even though they are not redundantly rigid. (This will be used in the next section to construct
two equivalent but non-congruent generic realisations of a mixed graph which is direction irreducible but not redundantly rigid.)
\begin{lemma}\label{dired}
Let $G=(V;D,L)$ be a direction
irreducible mixed graph which has $|L|\geq 2$ and is   not redundantly rigid. Suppose that $(G,p)$ is a  globally rigid generic realisation of $G$.
Then\\
(a) $G-e$ is bounded for all $e\in D$,\\
(b) $r(G-e)=r(G)-1$ for all $e\in D$, and  \\
(c) every length edge of $G$ belongs to a length-pure $M$-circuit of
$G$.
\end{lemma}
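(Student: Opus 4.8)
Here is my plan for proving Lemma~\ref{dired}.

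\medskip

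\textbf{Overall strategy.} The three conclusions all say that $G$, while not redundantly rigid, is ``barely'' non-redundant: deleting a direction edge never breaks rigidity too badly (it stays bounded), the rank drop is always exactly one, and no length edge is a ``non-redundant'' bridge in the rigidity sense. The plan is to prove (a) first using Theorem~\ref{hend}(b) together with the boundedness machinery of Section~\ref{sec:bounded}, then derive (b) as a consequence of (a) plus direction irreducibility (via Lemma~\ref{dindred} or the structure of bounded components), and finally obtain (c) from Theorem~\ref{hend}(a) combined with direction irreducibility to rule out the remaining case.

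\medskip

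\textbf{Part (a).} Fix $e\in D$. Since $(G,p)$ is globally rigid it is rigid, so $G$ is rigid. If $G-e$ is rigid there is nothing to prove (rigid implies bounded by Lemma~\ref{augment}). So suppose $G-e$ is not rigid. Because $G$ is rigid and has $|L|\ge 2$, one checks that $G-e$ contains a non-trivial rigid subgraph --- for instance, since $G$ is not redundantly rigid there is some direction edge $e'$ with $G-e'$ not rigid, and an analysis of how removing a single direction edge can destroy rigidity (it must lie in every maximal rigid subgraph containing its endpoints) shows $G-e$ still has a non-trivial rigid piece; more directly, $e$ itself cannot span all of $V$ so $G-e$ restricted to $V\setminus\{$an endpoint of $e\}$ or any large induced rigid subgraph does the job. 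Then Theorem~\ref{hend}(b) applies: since $(G,p)$ is globally rigid, $G-e$ is either rigid or unbounded; but if $G-e$ were unbounded then, by Lemma~\ref{bounded} applied after reducing to the direction-independent case, $G-e$ would have a ``bad'' set certifying that contracting the length edges does not give two spanning trees, and one argues this forces a direction reduction of type (R2) on $G$ (contract a suitable bounded component), contradicting direction irreducibility. Hence $G-e$ is rigid or bounded, i.e.\ bounded in all cases.

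\medskip

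\textbf{Part (b).} Given (a), fix $e\in D$; we want $r(G-e)=r(G)-1=2|V|-3$. Since $G-e$ is bounded but $G$ is rigid and not redundantly rigid, $e$ is not in any $M$-circuit, so $r(G-e)=r(G)-1$ automatically unless... wait --- the point is $r(G-e)\in\{r(G)-1,r(G)\}$ always, and $r(G-e)=r(G)$ would mean $e$ lies in an $M$-circuit, i.e.\ $G$ is redundantly rigid \emph{at} $e$. The subtlety is that $G$ not redundantly rigid only says \emph{some} edge is non-redundant, not this particular $e$. So the real content is: \emph{every} direction edge is non-redundant. Here is where direction irreducibility does the work via Lemma~\ref{dindred}: passing to a direction-independent spanning subgraph and using that $G$ admits no (R1)- or (R2)-reduction, together with the bounded-component count of Lemma~\ref{boundedcomps}, one shows that if some direction edge $e$ lay in an $M$-circuit then either that circuit is direction-pure (giving an (R1)-reduction) or the mixed structure forces the rank equality to fail somewhere, producing an (R2)-reduction by contracting the bounded component containing the length edges. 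So $r(G-e)=r(G)-1$ for all $e\in D$.

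\medskip

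\textbf{Part (c).} Let $f\in L$. By (b) every direction edge is non-redundant, so in particular $G-f$ must still be rigid for $G$ to have any hope of being globally rigid after the other reductions --- more precisely, since $|L|\ge 2$, Theorem~\ref{hend}(a) applied to the globally rigid $(G,p)$ gives that $G-f$ is rigid, i.e.\ $f$ is redundant. Thus $f$ lies in some $M$-circuit $C$. If $C$ were mixed or length-pure we are done (length-pure is the claim; if $C$ is mixed, note a mixed $M$-circuit contains a length-pure or... no). The claim is specifically that $f$ lies in a \emph{length-pure} $M$-circuit. So from $f$ being in some $M$-circuit I need to upgrade to a length-pure one. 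Suppose the $M$-component $H$ containing $f$ is mixed. I would then use that $G$ is direction irreducible --- no direction edge of $H$ lies in a direction-pure $M$-circuit --- together with (b) (every direction edge non-redundant) to show $H$ can contain no direction edges at all, hence $H$ is length-pure and every $M$-circuit through $f$ inside $H$ is length-pure; this is essentially the argument in the proof of Lemma~\ref{lem:redMcon} that each $M$-component is mixed or length-pure, refined by the non-redundancy of direction edges which forbids the mixed case.

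\medskip

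\textbf{Main obstacle.} The hardest step is (b) --- promoting ``$G$ is not redundantly rigid'' (an existential statement about one bad edge) to ``every direction edge is non-redundant'' (a universal statement). This requires genuinely using both the global rigidity of $(G,p)$ (through Theorem~\ref{hend}(b) and Lemma~\ref{deletedirection}) and direction irreducibility (through Lemma~\ref{dindred} and the bounded-component inequalities), and carefully handling the reduction to the direction-independent case so that Lemmas~\ref{bounded} and~\ref{boundedcomps} apply. Parts (a) and (c) are then comparatively routine consequences.
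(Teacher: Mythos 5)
Your overall skeleton (prove (a) by showing that unboundedness of $G-e$ forces a direction reduction, derive (b) from ``every direction edge is non-redundant'', and get (c) from Theorem~\ref{hend}(a)) matches the paper's, but two key mechanisms are missing. First, in (a): the hard point is not merely that unboundedness of $G-e$ leads to a contradiction, but \emph{which} subgraph you contract for the (R2)-reduction. An (R2)-reduction requires a single induced subgraph $H$ with $G/H$ direction-pure, so you must show that $G-e$ has exactly \emph{one} non-trivial bounded component; if there were two, each containing length edges, no single contraction would make the quotient direction-pure. The paper establishes this by substituting each non-trivial bounded component by a minimally rigid graph with exactly one length edge (Lemmas~\ref{subrigid}, \ref{replace}, \ref{special}), observing that the resulting graph $G'$ is minimally rigid and still globally rigid, and then invoking Theorem~\ref{hend}(a) to conclude that $G'$ has only one length edge --- this is precisely where the global rigidity of $(G,p)$ enters, and your sketch (``contract a suitable bounded component'') does not contain it. Your appeal to Theorem~\ref{hend}(b) in (a) is also a detour: its conclusion (``rigid or unbounded'') points the wrong way, it is not needed, and your justification that $G-e$ has a non-trivial rigid subgraph is not correct as stated (a single length edge does not induce a rigid subgraph).

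Second, and more seriously, your part (b) does not work as sketched. You propose to show that a direction edge $e$ lying in an $M$-circuit yields a direction reduction via Lemmas~\ref{dindred} and \ref{boundedcomps}; the phrase ``the mixed structure forces the rank equality to fail somewhere, producing an (R2)-reduction'' is not an argument, and I do not see how to make it one. The paper's proof of (b) is different and genuinely uses part (a) together with Theorem~\ref{hend}(b): if $e\in D$ lay in an $M$-circuit $H$, then $H$ is mixed (direction-independence rules out a direction-pure circuit) and hence a non-trivial \emph{rigid} subgraph of $G$; since $G$ is not redundantly rigid there is a non-redundant edge $f$, which must be a direction edge by Theorem~\ref{hend}(a) and cannot lie in $H$, so $H$ is a non-trivial rigid subgraph of $G-f$; Theorem~\ref{hend}(b) then forces $G-f$ to be unbounded, contradicting (a). Your part (c) is essentially right once (b) is in place (every edge of an $M$-circuit is redundant, so by (b) the circuit through a length edge contains no direction edges), though it is cleaner to argue about the $M$-circuit directly rather than the $M$-component.
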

\bproof (a) First note that $G$ is direction-independent, since $G$
is direction irreducible. Now suppose for a contradiction that $G-e$
is not bounded for some $e\in D$. We will show that $G$ has a
direction reduction. Let $H_1,H_2,\ldots,H_m$ be the bounded
components of $G-e$. Then each length edge of $G$ is contained in one of the subgraphs $H_i$.
Let $D^*\sub D$ be the set of all edges of
$G$ joining distinct subgraphs $H_i$, and $H$ be the graph obtained from $G$ by contracting each $H_i$ to a single vertex. Since $G$ is rigid, $G$ is
bounded. Since $G$ is direction-independent, Lemma
\ref{bounded} now implies that the graph $G/L$ obtained from $G$ by contracting
each length edge has two edge-disjoint spanning trees.  Since $H$ can be obtained from $G/L$ by contracting a (possibly empty) set of direction edges, $H$ also has two edge-disjoint spanning trees. In particular,  $|D^*|\geq 2m-2$. On the other hand, Lemma
\ref{boundedcomps} implies that $|D^*-e|\leq 2m-3$. Thus $e\in D^*$,
$|D^*|= 2m-2$, and $H$  is the union of two edge-disjoint
spanning trees.
 Since $G$ is rigid we have
$$2|V|-2=r(G)\leq |D^*|+\sum_{i=1}^m r(H_i)\leq 2m-2+\sum_{i=1}^m (2|V(H_i)|-2)=2|V|-2.$$
Thus equality must hold throughout. In particular,
$r(H_i)=2|V(H_i)|-2$ for each $i$, so each subgraph $H_i$ is
rigid.

Let $G'=(V;D',L')$ be obtained from $G$ by substituting each
non-trivial subgraph $H_i$ by a minimally rigid graph
$H'_i$ with exactly one length edge. Each framework $(H'_i,p|_{H_i'})$ is globally
rigid by Lemma \ref{special}. Thus repeated applications of Lemma
\ref{replace} imply that $(G',p)$ is globally rigid. On the other
hand, $|D'|+|L'|=|D^*|+\sum_{i=1}^m
r(H_i)=2m-2+\sum_{i=1}^m(2|V(H_i)|-2)=2|V|-2$, so $G'$ is minimally
rigid. Theorem \ref{hend}(a) now implies that $G'$ has exactly one
length edge. Since $H_i'$ contains a length edge whenever $H_i$ is
non-trivial, $G-e$ has exactly one non-trivial bounded component,
$H_1$ say. Since $G/H_1=H$ and $H$ is the union of two edge-disjoint spanning trees, $G$ is direction reducible to
$H_1$. This contradicts the hypothesis that $G$ is direction
irreducible.
\\[1mm]
(b) Suppose that $r(G-e)=r(G)$ for some $e\in D$. Then $e$ is
contained in an $M$-circuit $H$ of $G$. Since $G$ is direction-independent,
$H$ must be a mixed $M$-circuit.
Since $G$ is not redundantly rigid, $G-f$ is not rigid for some
$f\in D\cup L$. Theorem \ref{hend}(a) implies that $f\in D$. Clearly
$f$ is not an edge of $H$ and hence $H$ is a non-trivial rigid
subgraph of $G-e$. Theorem \ref{hend}(b) now implies that $G-f$ is
unbounded, contradicting (a).
\\[1mm]
(c) Choose $e\in L$. Then $e$ belongs to an $M$-circuit $H$ of $G$
by Theorem \ref{hend}(a). By (b), $H$ cannot be a mixed $M$-circuit.
Hence $H$ is length-pure.
\eproof

\section{Proof of Theorem \ref{redrigid}}
\label{sec:mainresult}

Since every generic realisation of a direction irreducible, 2-connected, direction-balanced, redundantly rigid graph is globally rigid by  Theorem \ref{thm:katie} and Lemma \ref{lem:redMcon}, we only need to show necessity in Theorem \ref{redrigid}. Hence we may suppose that $G$ is a direction irreducible mixed graph and that every generic realisation of $G$ is globally rigid. Then $G$ is 2-connected and direction-balanced by \cite{JJ1}. We will complete the proof by applying Theorem \ref{thm:ExistsNonGloballyRigidRealisation} below to deduce that $G$ must also be redundantly rigid. The proof idea is to show that, if $G$ is not redundantly rigid, then for any given generic realisation $(G,p)$,
we can construct a sequence of generic realisations $q_0, q_1, \ldots, q_t$ such that $t\leq |D|$ and $(G,q_t)$ is not globally rigid.
We construct this sequence from $(G,p)$ by first reflecting $(G,p)$ in the $x$-axis to obtain $(G,q_0)$, and then recursively ``correcting'' the changed direction constraints back to their original value in $(G,p)$. Every time we ``correct'' a direction constraint, we obtain a new realisation in our sequence.



\begin{theorem}\label{thm:ExistsNonGloballyRigidRealisation}
Let $G=(V;D,L)$ be a direction irreducible mixed graph with $|{L}|\geq 2$ such that $G$ is not redundantly rigid. Then some generic realisation of $G$  is not globally rigid.
\end{theorem}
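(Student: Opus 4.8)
The plan is to argue by contradiction. Suppose every generic realisation of $G$ is globally rigid; then $G$ is rigid, and Lemma~\ref{dired}, applied to any globally rigid generic realisation, yields that $G-e$ is bounded and, since $r(G-e)=r(G)-1=2|V|-3$, not rigid, for every $e\in D$, and that some length edge lies in a length-pure $M$-circuit $K$. Since $E(K)\subseteq L$ we have $K\subseteq G-e$ for every $e\in D$, and $K$ is length-rigid. (If $D=\es$, reflecting a generic realisation in a coordinate axis already gives an equivalent non-congruent framework, contradicting the assumption; so assume $D\ne\es$.)

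Fix a generic $(G,p)$ translated so that $p_0(v_0)=(0,0)$, let $q_0$ be the reflection of $p_0$ in the $x$-axis, and enumerate $D=\{e_1,\dots,e_k\}$. I would build quasi-generic realisations $q_1,\dots,q_k$ in standard position so that $q_i$ agrees with $p_0$ on all lengths and on the slopes of $e_1,\dots,e_i$, agrees with $q_0$ (i.e.\ has negated slope) on $e_{i+1},\dots,e_k$, and has $q_i|_K$ of the same orientation as $q_0|_K=\ov{p_0|_K}$, which is the opposite orientation to $p_0|_K$. Given $q_{i-1}$, apply Lemma~\ref{deletedirection} to the rigid graph $G$, the direction edge $e_i$, and the bounded non-rigid graph $G-e_i$: the component $C$ of the framework space $S_{G-e_i,q_{i-1},v_0}$ through $q_{i-1}$ is a circle, and if $-q_{i-1}\notin C$ then $(G,q_{i-1})$, hence (translating to generic position) some generic realisation, is not globally rigid, contradicting the assumption. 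So $-q_{i-1}\in C$. The map $q\mapsto-q$ preserves $C$ and is fixed-point-free (a fixed point would be the zero realisation, which is not equivalent to $(G-e_i,q_{i-1})$ since $G-e_i$ has a length edge), hence is the antipodal involution of the circle; writing $C=\mb R/\mb Z$ with $q_{i-1}$ at $0$ and $-q_{i-1}$ at $\tfrac12$, the slope function $s_{e_i}$ therefore has period $\tfrac12$ on $C$, and it has nonzero derivative at $0$ because $q_{i-1}$ is quasi-generic while $G$ is rigid and $G-e_i$ is not (the flex of $G-e_i$ at $q_{i-1}$ is not a flex of $G$).

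If $s_{p_0}(e_i)$ lies in the image of $s_{e_i}$ on $C$, put $q_i$ equal to a point of $C$ with $s_{q_i}(e_i)=s_{p_0}(e_i)$. Otherwise that image is a proper closed arc containing $s_{q_{i-1}}(e_i)$ in its interior, and ``period $\tfrac12$ together with nonzero derivative at $0$'' forces $s_{e_i}$ to attain $s_{q_{i-1}}(e_i)$ again at some $q^*\in C\sm\{q_{i-1},-q_{i-1}\}$; then $(G,q^*)\equiv(G,q_{i-1})$ is non-congruent, so $(G,q_{i-1})$ is not globally rigid, a contradiction. In the first case $q_i$ has all the desired properties: it is quasi-generic because $\ov{\rat(q_i)}\supseteq\ov{\rat(f_G(q_i))}=\ov{\rat(f_G(p_0))}=\ov{\rat(p_0)}$ (Corollary~\ref{qgen3}; $f_G(q_i)$ and $f_G(p_0)$ differ only by sign changes) has transcendence degree $2|V|-2$ over $\rat$, while $q_i\in\real^{2|V|-2}$; it agrees with $q_{i-1}$ on $G-e_i$ and with $p_0$ on $e_i$ since $q_i\in C$; and $q_i|_K$ keeps the orientation of $q_{i-1}|_K$ because $C$ is connected, the edge-lengths of $K$ are constant along $C$, and $K$ is length-rigid, so the restriction of $C$ to $V(K)$ lies in a single component of a fibre of $K$'s length-rigidity map, which at the quasi-generic value occurring here is a single rotation orbit. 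Iterating, either some step produces a contradiction, or we reach $q_k$ with $f_G(q_k)=f_G(p_0)$, so $(G,q_k)\equiv(G,p)$; but $q_k|_K$ has the opposite orientation to $p_0|_K$, whereas every realisation congruent to $p_0$ that fixes $v_0$ equals $p_0$ or $-p_0$, both of the same orientation on $K$ as $p_0$. Hence $q_k\ne\pm p_0$ and $(G,p)$ is not globally rigid, again contradicting the assumption. In every case we obtain a contradiction, so some generic realisation of $G$ is not globally rigid.

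The hard part is the correction step: showing that at each stage one can flex $(G-e_i,q_{i-1})$ around its framework-space circle so as to restore the $p$-value of the slope of $e_i$ while remaining quasi-generic, or else extract a non-globally-rigid framework directly. The leverage is the free (hence antipodal) involution $q\mapsto-q$ on that circle together with the transversality of $s_{e_i}$ at the quasi-generic point $q_{i-1}$; the length-rigidity of the circuit $K$ is exactly what prevents the iteration from silently undoing the reflection and arriving back at $\pm p_0$. Routine technicalities (arranging that no relevant direction edge is ever vertical, and that Lemma~\ref{deletedirection} may be applied at quasi-generic rather than literally generic realisations) are handled by small translations and coordinate changes.
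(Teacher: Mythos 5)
Your proof is correct and follows essentially the same strategy as the paper's: assume every generic realisation is globally rigid, invoke Lemma \ref{dired}, reflect a quasi-generic realisation in the $x$-axis, and iteratively restore the direction constraints by moving around the circle supplied by Lemma \ref{deletedirection}, while tracking the orientation of a length-pure piece to certify non-congruence of the final framework. The only cosmetic difference is that the paper obtains the intermediate slope value directly from the $180^\circ$ net rotation of the missing edge's direction along the path from $q_j$ to $-q_j$ (so your ``proper closed arc'' case never actually arises), rather than via your antipodal-involution/period-$\tfrac{1}{2}$ argument.
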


\bproof
We shall proceed by contradiction. Assume that  all generic realisations of $G$ are globally rigid.
By Lemma \ref{dired}(b) and (c), every length edge of $G$ is contained in a length-pure circuit in the rigidity matroid of $G$, and no direction edge of $G$ is contained in any circuit.
Let $D=\{d_0,d_1,\ldots,d_k\}$, let $G_1=(V_1;\emptyset,L_1)$ be a non-trivial $M$-connected component of $G$ and let $v_0\in V_1$.

Let $(G,p)$ be a quasi-generic realisation of $G$ with $p(v_0)=(0,0)$ and let $(G,q_0)$ be the quasi-generic realisation obtained by
reflecting $(G,p)$ in the $x$-axis. Then $(G-D,p)$ is equivalent to $(G-D,q_0)$. In addition we have  $s_{q_0}(d_i)= -s_p(d_i)$ for all $d_i\in D$, so $(G,p)$ and $(G,q_0)$ are not equivalent.


\begin{claim}\label{clm:MainResult_RealisationsInSequenceAreQuasiGeneric}
For all $j\in\{ 0,1,\ldots, k\}$ there exists a quasi-generic framework $(G,q_j)$  with $q_j(v_0)=(0,0)$,  rigidity map $f_G(q_j)=(h_{q_j}(e))_{e\in E}$ given by
\[
h_{q_j}(e)=
\begin{cases}
    s_{q_0}(e)  & \text{when } e\in\{d_{j},d_{j+1},\ldots,d_k\}\\
    h_p(e)          & \text{otherwise,}
\end{cases}
\]
and with the property that that $(G_1,q_j|_{V_1})$ can be obtained from $(G_1,q_0|_{V_1})$ by a rotation about the origin.
\end{claim}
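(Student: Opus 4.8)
The plan is to proceed by induction on $j$. The base case $j=0$ holds by construction of $(G,q_0)$: it is quasi-generic (a reflection of a quasi-generic framework), fixes $v_0$ at the origin, its rigidity map has $h_{q_0}(e)=s_{q_0}(e)=-s_p(e)$ on every direction edge and $h_{q_0}(e)=l_{q_0}(e)=l_p(e)=h_p(e)$ on every length edge, and $(G_1,q_0|_{V_1})$ is trivially obtained from itself by the identity rotation. For the inductive step, suppose $(G,q_j)$ has been constructed for some $j<k$; we must produce $(G,q_{j+1})$ which agrees with $q_j$ on all of the rigidity map except that the value on $d_j$ is changed from $s_{q_0}(d_j)=-s_p(d_j)$ back to $h_p(d_j)=s_p(d_j)$.

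The main tool is Theorem \ref{realise1} applied to a suitably chosen minimally rigid graph. First I would use the fact, supplied by Lemma \ref{dired}(b),(c) together with direction-independence, that $d_j$ lies in no $M$-circuit of $G$: thus $r(G-d_j)=r(G)-1$, and since $G$ is rigid, $G-d_j$ has an $M$-component of rank one less than expected. Concretely, I want to locate a minimally rigid spanning subgraph $G'$ of $G$ containing $d_j$ such that deleting $d_j$ and adding back any single other "free" direction edge keeps things minimally rigid — or more simply, to work with the rigidity map directly. The strategy: define $s:D'\cup L'\to\real$ on the edge set of a minimally rigid spanning subgraph $G'$ of $G$ (chosen to contain all length edges and to contain $d_j$) by $s(e)=h_p(e)$ for $e\ne d_j$ among the edges of $G'$, $s(d_{i})=s_{q_0}(d_i)$ for those $d_i$, $i>j$, that lie in $G'$, and $s(d_j)=s_p(d_j)$. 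Then show $s$ is generic by the same transcendence-degree argument used in the proof of Lemma \ref{2tree}: since $G$ is rigid, Corollary \ref{qgen3} gives $\ov{\mb{Q}(f_G(q_j))}=\ov{\mb{Q}(q_j)}$, which has transcendence degree $2|V|-2$; the value $s$ differs from $f_G(q_j)$ only in a single coordinate, and I must argue this substitution does not drop the transcendence degree. The key point is that $s_p(d_j)$ and $s_{q_0}(d_j)=-s_p(d_j)$ generate the same field over $\mb{Q}(q_j\text{-data minus }d_j)$ — indeed $-s_p(d_j)$ is algebraic over $\mb{Q}(s_p(d_j))$ — but this direction of the argument is the wrong way; instead one shows that $s_p(d_j)$ is transcendental over the field generated by all the other coordinates of $s$. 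This follows because $d_j$ is in no $M$-circuit, so the edge $d_j$ is "independent from" the rest of $G'$ in a strong sense, meaning $f_{G'}(p)$ is generic by Lemma \ref{qgen1} and the coordinate corresponding to $d_j$ cannot be algebraic over the others.

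Once $s$ is known to be generic, Theorem \ref{realise1} — applied with $x_0y_0$ one of the length edges of $G'$, $z_0=v_0$, and the prescribed slopes $s|_{D'}$ — produces a realisation $(G',q_{j+1})$ with $f_{G'}(q_{j+1})=s$, unique up to dilation by $-1$ through the origin; genericity of $s$ and Corollary \ref{qgen3} (or the uniqueness clause) then force $q_{j+1}$ to be quasi-generic. Extending $q_{j+1}$ to all of $G$ using the non-$G'$ edges (which are determined by rigidity) gives the required framework, and $f_G(q_{j+1})$ has the asserted form by construction. Finally, for the statement about $(G_1,q_{j+1}|_{V_1})$: since $G_1$ is length-pure, its edges all lie in $L$, so their rigidity-map values (squared lengths) are unchanged from $q_j$ to $q_{j+1}$; thus $(G_1,q_{j+1}|_{V_1})$ and $(G_1,q_j|_{V_1})$ are equivalent length-pure frameworks on the same vertex set with $v_0$ fixed at the origin, and by the inductive hypothesis and Lemma \ref{equiv} (applied to a minimally rigid — here, globally direction-rigid is not available, so one uses that $G_1$ as a length-pure rigid graph has all equivalent realisations fixing one point congruent to it, i.e. related by a rotation or reflection; combined with orientation bookkeeping this yields a rotation).

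The step I expect to be the main obstacle is verifying that $s$ is generic — precisely, that changing the single coordinate of the rigidity map corresponding to $d_j$ (from $-s_p(d_j)$ to $+s_p(d_j)$, with all other coordinates held at their $q_j$-values) preserves transcendence degree $2|V|-2$ over $\mb{Q}$. The resolution should hinge on the fact that $d_j$ lies in no $M$-circuit of $G$, so that the slope $s_p(d_j)$ is genuinely "free" and its sign change does not create any new algebraic dependency among the remaining $2|V|-3$ coordinates; making this precise via the matroid structure and Lemmas \ref{qgen1}–\ref{qgen2} is the technical heart of the argument.
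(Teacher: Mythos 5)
Your proposal diverges fundamentally from the paper's argument, and the route you choose has a gap that I do not think can be repaired. The paper's inductive step is topological, not algebraic: it uses Lemma \ref{dired}(a),(b) to see that $(G-d_j,q_j)$ is bounded but not rigid, and then the standing hypothesis that every generic realisation of $G$ is globally rigid together with Lemma \ref{deletedirection} to conclude that the connected component of the framework space $S_{G-d_j,q_j,v_0}$ containing $q_j$ is a circle which also contains $-q_j$. Moving continuously along this circle from $q_j$ to $-q_j$ rotates the direction of the missing edge $d_j$ by $180^\circ$, so by the intermediate value theorem some intermediate configuration realises the slope $s_p(d_j)$; adding $d_j$ back gives $q_{j+1}$. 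Your plan never invokes the global rigidity assumption, boundedness, or Lemma \ref{deletedirection}, and these are indispensable: the claim is being proved under a hypothesis that is ultimately contradicted, and its truth genuinely depends on that hypothesis. In their place you propose to apply Theorem \ref{realise1} to a minimally rigid spanning subgraph $G'$ containing all of $L$. But Theorem \ref{realise1} only realises prescribed generic \emph{slopes} for a direction-sparse graph, accommodating a single length via the normalisation $t$; with $|L|\geq 2$ there is no result in the paper (and none is true in this generality) asserting that a generic vector of prescribed mixed rigidity-map values is actually attained by some realisation. Genericity of your function $s$ is not the obstacle you think it is --- it is immediate, since flipping the sign of some slope coordinates does not change the field they generate over $\rat$, which is exactly the observation the paper uses for the quasi-genericity part --- the obstacle is \emph{existence} of a realisation with those values, and your plan supplies no mechanism for it.

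The rotation statement exposes a second gap. In the paper, $(G_1,q_{j+1}|_{V_1})$ is a rotation of $(G_1,q_j|_{V_1})$ because $G_1$ is length-rigid and the deformation is a \emph{continuous} motion fixing $v_0$ at the origin, so the induced congruence of $G_1$ varies continuously from the identity and can never jump to a reflection. Your fallback --- that two equivalent length-pure frameworks on $G_1$ fixing a point must be congruent --- requires global length-rigidity of $G_1$ (which needs $3$-connectivity and is not available here), and even granting congruence you cannot rule out a reflection by "orientation bookkeeping" without the continuity of the motion. This is not a cosmetic point: the entire contradiction at the end of Theorem \ref{thm:ExistsNonGloballyRigidRealisation} rests on $q_k|_{V_1}$ being a \emph{reflection} of $p|_{V_1}$, so an argument that cannot distinguish rotations from reflections at each step proves nothing.
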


\bproof
We proceed by induction on $j$. If $j=0$ then  the claim holds trivially for $(G,q_0)$. Hence suppose that the required framework $(G,q_j)$ exists for some
$0\leq j<k$.
        The quasi-generic framework  $(G-d_j,q_j)$ is bounded but not rigid by Lemma \ref{dired}(a) and (b) (since boundedness and rigidity are generic properties). Since $(G,q_j)$ is globally rigid by assumption, Lemma \ref{deletedirection} implies that we can continuously move $(G-d_j,q_j)$ to form $(G-d_j,-q_j)$ whilst keeping $v_0$ fixed at the origin and maintaining all edge constraints. During this motion, the direction of the missing edge $d_{j+1}=u_{j+1}v_{j+1}$ changes continuously from $q_j(v_{j+1})-q_j(u_{j+1})$ to $-(q_j(v_{j+1})-q_j(u_{j+1}))$, a rotation by $180^\circ$. So at some point in this motion we must pass through a realisation $(G-d_{j+1},q_{j+1})$ at which the slope of this missing edge is $s_p(d_{j+1}))$. We can now add the edge $d_j$ back to this realisation to obtain the desired framework $(G,q_{j+1})$. Note that since $G_1$ is a length rigid subgraph of $G-d_j$ and the motion of $(G-d_{j+1},q_j)$ is continuous and keeps $v_0$ fixed at the origin, $(G_1,q_{j+1}|_{V_1})$ can be obtained from $(G_1,q_j|_{V_1})$ by a rotation about the origin.

        It remains to show that $(G,q_{j+1})$ is quasi-generic. Let $H$ be a minimally rigid spanning subgraph of $G$. Since $h_{q_{j+1}}(e)=\pm h_{p}(e)$ for all $e\in E(G)$ we have $\rat(f_H(q_{j+1}))=\rat(f_H(p))$. Since $f_H(p)$ is generic by Lemma \ref{qgen1}, Lemma \ref{qgen2} implies that
        $$\tran [\overline{\rat(q_{j+1})},\rat]=\tran [\overline{\rat(f_H(q_{j+1}))},\rat]=\tran [\overline{\rat(f_H(p))},\rat]=2|V|-2.$$ We can now use Lemma \ref{quasi} to deduce that $(H,q_{j+1})$, and hence also $(G,q_{j+1})$, are quasi-generic.
\eproof

    Applying  Claim \ref{clm:MainResult_RealisationsInSequenceAreQuasiGeneric} with $j=k$, we obtain a quasi-generic realisation $q_k$ of $G$ which is equivalent to $(G,p)$, has $q_k(v_0)=(0,0)$, and is such that $(G_1,q_{k}|_{V_1})$ can be obtained from $(G_1,q_0|_{V_1})$ by a rotation about the origin. Since $q_0$ was obtained from $p$ by reflecting $V_1$ across the $x$-axis, we have
\[
q_k(v) = R Z  p(v) \quad\text{for all } v\in V_1
\]
where $R$ and $Z$ are the $2\times 2$ matrices representing this rotation and reflection. Since $(G_1,p|_{V_1})$ is a quasi-generic framework with at least four vertices and $RZ$ acts on $\real^2$ as a reflection in some line through the origin, we have $q_k(v)\neq \pm p(v)$ for some $v\in V_1$.
 Hence $q_k|_{V_1}$ is not congruent to $p|_{V_1}$, and $q_k$ is not congruent to $p$. This implies that $(G,p)$ is not globally rigid and contradicts our initial assumption that all generic realisations of $G$ are globally rigid.
\eproof

\section{Algorithmic considerations}\label{sec:con}

We will describe a polynomial algorithm which decides if every generic realisation of a given mixed graph $ G=(V;D,L) $ is globally rigid.
If $|L|\leq 1$ then we need only determine whether $G$ is rigid and this can be accomplished using an orientation algorithm as in \cite {BJ} or a pebble game algorithm as in \cite{LS}. Hence we may suppose that $|L|\geq 2$.

We first consider the case when $ G $ is direction irreducible. In this case Theorem \ref{redrigid} tells us we need only determine whether $ G $ is 2-connected, direction-balanced and redundantly rigid. The first two properties can be checked using the connectivity algorithm of \cite{HT}, and the third by an orientation or pebble game algorithm.

It remains to show how we can reduce $ G $ to the direction irreducible case when $ G $ is direction reducible. We do this in two stages. In the first stage we reduce $ G$ to a direction-independent graph $G'=(V; D', L) $ by choosing $ D'$ to be a maximal subset of $ D $ which is independent in $ M (G) $.  This may again be accomplished using an orientation or pebble game algorithm.

Our final step is to find a direction reduction for a direction-independent graph. We accomplish this by using the following lemma combined with the algorithm for determining the bounded components in a mixed graph given in \cite{JK1}.

\begin{lemma} Suppose $G=(V; D, L)$ is a rigid, direction-independent mixed graph. Then  $G$ is direction reducible if and only if  $G-e$ is unbounded and has exactly one nontrivial bounded component for some  $e\in D$. Furthermore, if $G-e$ is unbounded and has exactly one nontrivial bounded component  $H$ for some  $e\in D$, then  $G$ is direction reducible to $H$.
\end{lemma}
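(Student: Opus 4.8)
The plan is to characterise direction reducibility of a rigid direction-independent mixed graph $G$ in terms of the bounded component structure of graphs of the form $G-e$ with $e\in D$, reducing the general problem to the algorithmically tractable problem of computing bounded components. For the ``if'' direction, suppose $G-e$ is unbounded with exactly one nontrivial bounded component $H$ for some $e\in D$. First I would observe that, since $G$ is rigid, $G$ is bounded, and since $G$ is direction-independent, Lemma \ref{bounded} tells us $G/L$ has two edge-disjoint spanning trees. The graph $H'$ obtained from $G$ by contracting each bounded component of $G-e$ to a single vertex is then obtained from $G/L$ by contracting direction edges, so $H'$ also has two edge-disjoint spanning trees; in particular, writing $m$ for the number of bounded components of $G-e$ and $D^\ast$ for the set of direction edges of $G$ joining distinct bounded components, we get $|D^\ast|\geq 2m-2$. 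On the other hand, Lemma \ref{boundedcomps} applied to $G-e$ (which is direction-independent, being a subgraph of $G$) gives $|D^\ast\setminus\{e\}|\leq 2m-3$, forcing $e\in D^\ast$, $|D^\ast|=2m-2$, and $H'$ to be the union of two edge-disjoint spanning trees. A rank count exactly as in the proof of Lemma \ref{dired}(a) then shows equality holds throughout in $2|V|-2=r(G)\leq |D^\ast|+\sum_{i=1}^m r(H_i)\leq 2m-2+\sum_{i=1}^m(2|V(H_i)|-2)=2|V|-2$, so each bounded component $H_i$ is rigid; since all but one of them are trivial, $G/H=H'$ where $H$ is the unique nontrivial one, and so (R2) applies: $G$ is direction reducible to $H$. (If $G-e$ had no nontrivial bounded component then $L=\emptyset$, in which case the same count shows $G/L=G=H'$ is the union of two edge-disjoint spanning trees, but then $e$ lies in a direction-pure circuit and (R1) applies; however the lemma as stated assumes the nontrivial component exists, so I need not treat this edge case unless I also want the converse to cover it.)

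For the ``only if'' direction, suppose $G$ is direction reducible. Since $G$ is direction-independent, (R1) is impossible (no direction edge lies in any circuit, let alone a direction-pure one, when $D$ is independent), so $G$ is direction reducible to some proper induced subgraph $K$ via (R2): $L\subseteq V(K)$ spans all length edges, $G/K$ is direction-pure, and $G/K$ is the union of two edge-disjoint spanning trees. By Lemma \ref{dindred}, $L'=L$ and $|D\setminus D'|=2|V\setminus V'|$ where $K=(V';D',L')$. Now I would pick any direction edge $e$ of $G$ that is \emph{not} an edge of $K$ — such an edge exists because $|D\setminus D'|=2|V\setminus V'|\geq 2$ since $K$ is proper — and show that $G-e$ is unbounded with $K$ as its unique nontrivial bounded component. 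Boundedness of $K$: since $G$ is rigid and $|D\setminus D'|=2|V\setminus V'|$, a rank count ($r(G)\leq r(K)+|D\setminus D'|$) forces $r(K)=2|V(K)|-2$, so $K$ is rigid, hence bounded. Unboundedness of $G-e$: apply Lemma \ref{bounded} to $G-e$, which is direction-independent; $(G-e)/L$ is obtained from $G/L$ by deleting the (non-loop) edge $e$, and since $|D\setminus D'|-1=2|V\setminus V'|-1$ there are too few edges outside $V(K)$ for $(G-e)/L$ to have two edge-disjoint spanning trees — more precisely, $K$ together with the $2|V\setminus V'|-1$ direction edges of $G-e$ outside $K$ cannot be bounded by the same counting argument as in Lemma \ref{boundedcomps}, so $G-e$ is unbounded. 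Finally, every bounded component of $G-e$ other than $K$ must be trivial: $K$ itself is a bounded subgraph of $G-e$ (as it is rigid) and hence is contained in a bounded component; if some other bounded component $C$ were nontrivial it would contain a length edge, but all length edges lie in $K$, so $C$ and $K$ would share that length edge and merge, a contradiction.

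The main obstacle I expect is the unboundedness argument in the ``only if'' direction — making precise that $G-e$ cannot be bounded. The clean way is: if $G-e$ were bounded, it would be a single bounded component containing $K$ and all $2|V\setminus V'|-1$ remaining direction edges outside $K$, so by Lemma \ref{bounded} $(G-e)/L$ would have two edge-disjoint spanning trees; but contracting $K/L'$ to a vertex inside $(G-e)/L$ leaves a graph on $|V\setminus V'|+1$ vertices with only $2|V\setminus V'|-1<2(|V\setminus V'|+1)-2$ edges, which cannot contain two edge-disjoint spanning trees, since two edge-disjoint spanning trees of a graph on $n$ vertices require $2n-2$ edges and this property is inherited under contraction. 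Everything else is a routine adaptation of the rank and Nash-Williams counting arguments already carried out in Lemmas \ref{dired}, \ref{boundedcomps}, and \ref{dindred}.
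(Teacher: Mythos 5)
Your proof is correct and follows essentially the same route as the paper: both directions rest on Lemma \ref{bounded}, Lemma \ref{boundedcomps} and the counting behind Lemma \ref{dindred}, and your ``only if'' argument (contracting $K$ inside $(G-e)/L$ to rule out two edge-disjoint spanning trees, then a rank count giving rigidity and hence boundedness of $K$) matches the paper's almost verbatim. The only differences are cosmetic: in the ``if'' direction you verify (R2) directly rather than citing Lemma \ref{dindred}, and the rank count showing every bounded component of $G-e$ is rigid is not actually needed there.
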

\bproof We first suppose that $G$ is direction reducible to a subgraph $G'=(V'; D', L)$ . By Lemma \ref{dindred}, $|D\sm D'|=2|V\sm V'|$. It follows that, for any  $ e\in D\sm D'$, the graph obtained from $ G-e $ by contracting $ E (G') $ has $2|V\sm V'|-1$ edges and $|V\sm V'|+1$ vertices, so does not have two edge-disjoint spanning trees. This implies that $(G-e)/L $ does not have two edge-disjoint spanning trees so $ G-e $ is unbounded by Lemma \ref{bounded}. In addition,  we have
$$2|V|-2=r (G)\leq r (G') +|D\sm D'|\leq 2|V'|-2+2|V\sm V'|$$
and equality must hold throughout. In particular,  $ r (G')=2|V'|-2$, so $ G'$ is rigid, and hence bounded. Since $ L\subseteq E (G') $, $ G'$ must be the unique nontrivial bounded component of $G-e$.

We next suppose that $G-e$ is unbounded and has exactly one nontrivial bounded component  $ H=(V'; D', L) $ for some  $e\in D$. Then
$|(D-e)\sm D'|\leq 2(|V\sm V'|+1)-3 $ by Lemma \ref{boundedcomps}, so $|D\sm D'|\leq 2|V\sm V'|$. Strict inequality cannot hold since $ G $ is rigid, and hence bounded. Thus $|D\sm D'|= 2|V\sm V'|$ and $G$ is direction reducible to $ H $ by Lemma \ref{dindred}.
\eproof

\section{Closing remarks}\label{sec:close}

The question of deciding whether global rigidity  is a generic property of direction-length frameworks remains open.
Theorem \ref{thm:katie} shows that it is a generic property when the underlying graph is $M$-connected,
 and the necessary conditions for global rigidity given in \cite{JJ1} show that it is also a generic property if the underlying graph is not both 2-connected and direction-balanced.
Theorem \ref{reduce} and Lemma \ref{lem:redMcon} reduce the question to the case when the underlying graph is direction irreducible and is not redundantly rigid. Theorem \ref{redrigid} tells us that a direction irreducible mixed graph  $G$ which is not redundantly rigid has a generic realisation which is not globally rigid, but it is still conceivable that $G$ may also have a generic realisation which is globally rigid. We believe that this is not the case:

\begin{conj}\label{characterise}
Suppose $(G,p)$ is a generic realisation of a direction irreducible
mixed graph $G$ with at least two length edges. Then $(G,p)$ is
globally rigid if and only if $G$ is $2$-connected,
direction-balanced, and redundantly rigid.
\end{conj}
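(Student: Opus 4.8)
The backward implication is already available: if $G$ is $2$-connected, direction-balanced and redundantly rigid then $G$ is $M$-connected and rigid by Lemma~\ref{lem:redMcon}, so every generic realisation of $G$, in particular $(G,p)$, is globally rigid by Theorem~\ref{thm:katie}. For the forward implication, the necessity of $2$-connectivity and of being direction-balanced is \cite{JJ1}, so the whole content of the conjecture is: for a direction irreducible $G$ with $|L|\ge 2$ that is not redundantly rigid, \emph{no} generic realisation is globally rigid --- equivalently, global rigidity is a generic property within this class. Theorem~\ref{thm:ExistsNonGloballyRigidRealisation} already exhibits one non-globally-rigid generic realisation, so the task is to upgrade ``some'' to ``every''.

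The plan is to re-run the chain argument of Theorem~\ref{thm:ExistsNonGloballyRigidRealisation}, but starting from an \emph{arbitrary} quasi-generic realisation $(G,p)$ in standard position with $p(v_0)=(0,0)$ for a vertex $v_0$ of a non-trivial length-pure $M$-component $G_1=(V_1;\es,L_1)$ of $G$. If $G$ has no globally rigid generic realisation there is nothing to prove; otherwise Lemma~\ref{dired} applies to such a realisation and yields, as graph-theoretic facts about $G$, that $G-d$ is bounded with $r(G-d)=r(G)-1$ for every $d\in D$, and that each length edge lies in a length-pure $M$-circuit, so that a component $G_1$ with $|V_1|\ge4$ as above exists. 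Reflect $(G,p)$ in the $x$-axis to obtain the quasi-generic framework $(G,q_0)$, and attempt to correct the direction edges $d_0,d_1,\dots$ one at a time as in Claim~\ref{clm:MainResult_RealisationsInSequenceAreQuasiGeneric}: at stage $j$ the framework $(G-d_j,q_j)$ is quasi-generic, bounded and not rigid, so by Lemma~\ref{deletedirection} the component $C_j$ of its framework space through $q_j$ is a circle, and either $-q_j\in C_j$ or $(G,q_j)$ is not globally rigid. When $-q_j\in C_j$ one travels along $C_j$ from $q_j$ to $-q_j$; the direction vector of the deleted edge $d_j$ reverses during this motion, so the slope of $d_j$ sweeps over the whole projective line and in particular attains $s_p(d_j)$, and restoring $d_j$ at that instant yields a quasi-generic $q_{j+1}$ (the transcendence-degree bookkeeping of Claim~\ref{clm:MainResult_RealisationsInSequenceAreQuasiGeneric}, via Lemmas~\ref{quasi}, \ref{qgen1} and \ref{qgen2}, goes through verbatim since every entry of $f_G(q_{j+1})$ is $\pm$ the corresponding entry of $f_G(p)$). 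If every stage is of this first kind, the final framework $\hat q$ is equivalent to $(G,p)$ and satisfies $\hat q|_{V_1}=RZ\,p|_{V_1}$, where $Z$ is the reflection in the $x$-axis and $R$ is a rotation about the origin obtained by composing the rigid motions undergone by $G_1$ at each stage; since $RZ$ is a reflection in a line through the origin and $(G_1,p|_{V_1})$ is quasi-generic on at least four vertices, $\hat q|_{V_1}$ is not congruent to $p|_{V_1}$, whence $(G,p)$ is not globally rigid.

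The genuine obstacle --- and the reason this is still a conjecture --- is the other alternative, namely $-q_j\notin C_j$ at some stage, which by Lemma~\ref{deletedirection} only says that the \emph{intermediate} framework $(G,q_j)$ fails to be globally rigid, and tells us nothing directly about $(G,p)$. To finish one needs either (i) an independent proof that global rigidity is a generic property here --- e.g.\ by adapting the techniques that establish genericity of global rigidity for pure frameworks, or by showing that the number of congruence classes of frameworks equivalent to a generic $(G,p)$ does not depend on $p$ --- which combined with Theorem~\ref{thm:ExistsNonGloballyRigidRealisation} would close the argument immediately; or (ii) a way to exclude the alternative $-q_j\notin C_j$, most plausibly by exploiting $2$-connectivity and direction-balancedness to forbid the ``partial reflections'' of $G_1$ that would place $-q_j$ in a component of the framework space of $G-d_j$ other than $C_j$, or to transfer the failure of global rigidity from $(G,q_j)$ back to $(G,p)$. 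Making (ii) precise, using the finer structure of direction irreducible graphs that are not redundantly rigid which lies behind Lemma~\ref{dired}, appears to be the crux.
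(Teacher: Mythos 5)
This statement is Conjecture~\ref{characterise}: the paper does not prove it, and you have correctly recognised that rather than manufacturing a spurious argument. Your accounting of what is known is accurate and matches the paper's own discussion. The backward implication is Lemma~\ref{lem:redMcon} plus Theorem~\ref{thm:katie}; necessity of $2$-connectivity and direction-balance is from \cite{JJ1}; and the open content is precisely upgrading Theorem~\ref{thm:ExistsNonGloballyRigidRealisation} from ``some generic realisation is not globally rigid'' to ``every generic realisation is not globally rigid'', i.e.\ showing global rigidity is a generic property in this class. Your diagnosis of the obstruction is also the right one: the chain argument in Claim~\ref{clm:MainResult_RealisationsInSequenceAreQuasiGeneric} only runs because the blanket contradiction hypothesis (\emph{all} generic realisations globally rigid) forces $-q_j\in C_j$ at every stage via Lemma~\ref{deletedirection}; starting from one given $(G,p)$ the chain can stall at a stage with $-q_j\notin C_j$, which certifies non-global-rigidity of the intermediate $(G,q_j)$ but transfers nothing back to $(G,p)$. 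The paper states the same difficulty in its closing remarks in a complementary form: when $-p\in C$ one may traverse the circle and recover only the congruent realisation $q=-p$, and Lemma~\ref{lem:-p} exhibits a genuine family of graphs where $-p\in C$ occurs, so the difficulty cannot be wished away.

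Two small additions for completeness. First, your route (i) (prove genericity of global rigidity here and invoke Theorem~\ref{thm:ExistsNonGloballyRigidRealisation}) is exactly the reduction the paper makes explicit; nothing in the paper accomplishes it. Second, the paper does verify the conjecture in two special cases that your proposal does not mention: Theorem~\ref{vspecial}, for graphs whose length edges induce a length-rigid subgraph (proved by combining Lemma~\ref{lem:-p} with an intermediate-value argument on a realisation built from the reflected length-rigid block via Theorem~\ref{realise1}), and Theorem~\ref{sparse}, for graphs with $|D|+|L|\le 2|V|-1$. So: your analysis contains no errors, but it is, correctly, not a proof --- none exists in the paper.
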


Conjecture \ref{characterise} (combined with Theorem \ref{reduce} and Lemma \ref{special}) would give a complete characterization of globally rigid generic direction-length frameworks.
By the above discussion, the conjecture would follow if we could show that no generic realisation of a direction irreducible, non-redundantly rigid mixed graph with  at least two length edges is globally rigid.
We can try to do this by using the proof technique
 of Theorem \ref{hend}(b) given  in
\cite{JK2}. The idea is to remove a `non-redundant' direction edge
$e$ from a generic rigid framework $(G,p)$ and allow $(G-e,p)$ to move, keeping one vertex pinned at the origin. We
eventually reach another realisation $(G,q)$ for which $e$ has the same
direction as in $(G,p)$. Formally, we show that the connected component $C$
of the framework space $S_{G-e,p,v}$ containing $p$ must also
contain a point $q\neq p$ such that $(G,p)$ is equivalent to $(G,q)$.
The problem is that we may have $q=-p$.
A simple example where this occurs is the
case when $|D|=1$, when it is easy to see that the only points $q\in
C$ with $(G,q)$ equivalent to $(G,p)$ are $q=p$ and $q=-p$.
Our next result gives a more interesting family of graphs $G$ for which $-p\in C$.

\begin{lemma}\label{lem:-p}
Let $G=(V;D,L)$ be a rigid mixed graph, $H=(U;\emptyset,L)$ be the length-pure subgraph induced by $L$ and $u\in U$.
Suppose that $H$ is length rigid, $r(G-e)=r(G)-1$ for all $e\in D$, and $G-e_0$ is bounded for some $e_0\in D$.
Let $(G,p)$ be a quasi-generic framework with $p(u)=(0,0)$ and $C$ be the connected component of the configuration space
$S_{G-e_0,p,u}$ which contains $p$. Then $-p\in C$.
\end{lemma}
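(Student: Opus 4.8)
The plan is to build by hand a continuous path inside the framework space $S_{G-e_0,p,u}$ joining $p$ to $-p$; this immediately places $-p$ in the connected component $C$ of $p$. (Lemma \ref{deletedirection} tells us that $C$ is in fact a circle, but we will not need this.) The path will be a ``rotation'' of the length‑rigid part $H$ about the origin, carrying the remaining vertices along so that the direction constraints of $D-e_0$ are preserved.

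Before constructing it I would extract the structure forced by the hypotheses. Since $r(G-e)=r(G)-1$ for every $e\in D$, every direction edge is a coloop of $M(G)$; hence $D$ is independent, no direction edge lies in an $M$-circuit, and $M(G)=M(G\sm D)\oplus(\text{free matroid on }D)$. Writing $k=|V\sm U|$ and using that $H$ is length rigid (so $r(H)=2|U|-3$, and $H$ is connected) together with $G$ rigid (so $r(G)=2|V|-2$), this forces $|D|=2k+1$. Since $D-e_0\subseteq D$ is independent, $G-e_0$ is direction‑independent; as it is bounded, Lemma \ref{bounded} gives that $\Gamma:=(G-e_0)/L$ has two edge‑disjoint spanning trees. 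Because $H$ is connected, $\Gamma$ has $k+1$ vertices, one of which, call it $\star$, is the image of $U$; and $\Gamma$ has only $|D-e_0|=2k$ edges in total, so it must in fact \emph{be} the edge‑disjoint union of two spanning trees, and in particular it is loopless, so no edge of $D-e_0$ joins two vertices of $U$. Finally $-p\in S_{G-e_0,p,u}$: it fixes $u$ at the origin, preserves every length, and $-p(a)+p(b)=-(p(a)-p(b))$ is parallel to $p(a)-p(b)$.

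For the construction, for $\phi\in[0,\pi]$ let $R_\phi$ be the rotation of $\real^2$ about the origin through $\phi$ and put $q_\phi(x)=R_\phi p(x)$ for $x\in U$. The requirements that $q_\phi(a)-q_\phi(b)$ be parallel to $p(a)-p(b)$ for all $ab\in D-e_0$ form $2k$ linear equations in the $2k$ unknown coordinates $(q_\phi(v))_{v\in V\sm U}$ (each such edge has an endpoint in $V\sm U$, by the previous paragraph), with a coefficient matrix $A$ that does not depend on $\phi$; only the right‑hand side $b_\phi$, built from $q_\phi|_U$, depends on $\phi$, and it does so smoothly. Granting that $A$ is nonsingular, $q_\phi|_{V\sm U}=A^{-1}b_\phi$ is uniquely determined and smooth in $\phi$, and the resulting $q_\phi$ lies in $S_{G-e_0,p,u}$ (the length edges lie inside $U$, where $q_\phi$ is a rotation of $p$; the direction constraints hold by construction; $q_\phi(u)=(0,0)$). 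Thus $\phi\mapsto q_\phi$ is a continuous path in $S_{G-e_0,p,u}$, and by uniqueness $q_0=p$ (since $p$ solves the system at $\phi=0$) and $q_\pi=-p$ (since $-p$ solves it at $\phi=\pi$, by the last sentence of the previous paragraph). Hence $-p\in C$.

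The main obstacle is proving that $A$ is nonsingular. Its entries depend only on the directions of the edges of $D-e_0$, and $\det A$ is a polynomial in the coordinates of $p$; since $(G,p)$ is quasi‑generic it suffices to exhibit one realisation of $V$ at which $A$ is nonsingular. Let $G^D$ be the direction‑pure graph with edge set $D-e_0$; it is simple, because $G$ has no parallel edges of the same type, and since $D-e_0$ is independent the Servatius--Whiteley characterisation of independence in $M(G)$ gives $i_{G^D}(X)\le 2|X|-3$ for all $X$ with $|X|\ge 2$. By Theorem \ref{realise1} we may therefore realise $G^D$ with any prescribed generic family of edge slopes; fixing such a realisation, $A$ becomes the incidence matrix $I(\Gamma,q)$ of $\Gamma$ (with $q$ recording the now‑generic edge directions, the parallel edges of $\Gamma$ receiving distinct directions since they come from distinct edges of $G$) with the two columns of $\star$ deleted. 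Because $\Gamma$ is a union of two edge‑disjoint spanning trees, $i_\Gamma(X)\le 2|X|-2$ for every $X$, so by Theorem \ref{frame} (whose conclusion depends only on the directions assigned to the edges) the rows of $I(\Gamma,q)$ are linearly independent; being a $2k\times(2k+2)$ matrix of rank $2k$ whose null space consists only of the translations, deleting the two columns of $\star$ leaves a nonsingular $2k\times 2k$ matrix. Hence $A$ is nonsingular at this realisation, so $\det A$ is not identically zero, so $A$ is nonsingular at our quasi‑generic $p$, which completes the argument.
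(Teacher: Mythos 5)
Your proof is correct, and it takes a genuinely different route from the paper's. Both arguments share the same geometric idea -- rotate the length-rigid part $H$ about the origin through $\pi$ radians and drag the remaining vertices along so that the directions in $D-e_0$ are preserved -- but they diverge in how the extension to $V\sm U$ is produced. The paper substitutes $H$ by a minimally rigid graph $H'$ with one length edge, invokes Theorem \ref{realise1} to realise the prescribed slopes for each angle $\theta$ in a dense subset $B^*$ of $[0,2\pi)$ (those $\theta$ for which the slope set stays algebraically independent), pins down the extension via the global rigidity of $H'$ and the uniqueness clause of Theorem \ref{realise1}, and then needs a density/continuity argument to conclude that all the resulting configurations $p_\theta$ lie in the same component $C$. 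You instead observe that the unknown positions of $V\sm U$ satisfy a square linear system whose coefficient matrix $A$ is independent of the rotation angle, and prove $\det A\neq 0$ once and for all; this yields an explicit smooth path defined for \emph{every} $\phi\in[0,\pi]$, with no exceptional angles and no density argument. The price is that you must extract more structure up front -- $|D|=2|V\sm U|+1$, that $(G-e_0)/L$ is exactly the edge-disjoint union of two spanning trees (via Lemma \ref{bounded}), hence that no edge of $D-e_0$ lies inside $U$ -- and establish the nonsingularity of $A$ via Theorem \ref{frame}, for which you need a witness realisation with generic edge directions; your appeal to Theorem \ref{realise1} for that witness, and your parenthetical remark that independence of the rows of $I(\Gamma,q)$ depends only on the lines spanned by the $q(e)$ (so that Theorem \ref{frame}, stated for generic $q$, applies after rescaling rows), are exactly the device the paper itself uses inside the proof of Theorem \ref{realise1}, so this is sound. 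At bottom both proofs rest on the same frame-matrix linear algebra, but your organisation is more self-contained and arguably cleaner, while the paper's reuses its substitution machinery and Lemma \ref{special} at the cost of the dense-subset bookkeeping.
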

\bproof The idea is to rotate $(H,p|_U)$ by $\theta$ radians about $p(u)=(0,0)$ and use Theorem \ref{realise1} to show that, for almost all values of $\theta$, we can extend the resulting framework $(H,q_\theta)$ to a framework $(G-e_0,p_\theta)$ which is equivalent to  $(G-e_0,p)$. To apply Theorem  \ref{realise1}, we construct $G'$ from $G$ by substituting a minimally rigid graph $H'$ with exactly one length edge for $H$ and then show that the required set of edge slopes for $(G'-e_0,p_\theta)$ is algebraically independent over $\rat$.

Let $H'=(U;D',L')$ be a minimally rigid graph on the same vertex set
as $H$ with exactly one length edge and let $G'$ be obtained
from $G$ by replacing $H$ by $H'$.  We first show that $G'-e_0$ is minimally rigid.
Since $G$ is rigid, $H$ is length-rigid and $r(G-e)=r(G)-1$ for all $e\in D$, we have
$|D|=2|V|-2-(2|U|-3)$ and hence $|D-e_0|=2|V|-2|U|$. Since $H'$ has $2|U|-2$ edges, this implies that $G'$ has $2|V|-2$ edges.  It remains to show that $G'-e_0$ is rigid.
Since $G-e_0$ is bounded, $(G-e_0)^+$ is rigid by Lemma \ref{augment}. Since $G'-e_0$ can be obtained from $(G-e_0)^+$ by substituting $H^+$ with $H'$, it is rigid by Lemma \ref{subrigid}.
Therefore $G'-e_0$ is minimally rigid.

For each $\theta\in [0,2\pi)$ let $q_\theta:U\to \real^2$ be the configuration
obtained by an anticlockwise rotation of $p|_U$ through $\theta$ radians about $(0,0)$.
Write $B=\{q_\theta \;:\;\theta\in [0,2\pi)\}$,
and let $B^*$ be the set of all configurations $q_\theta\in B$ such that
the set of slopes $\{s_p(e)\}_{e\in D-e_0}\cup \{s_{q_\theta}(e)\}_{e\in D'}$
is defined and is algebraically independent over $\rat$.
We claim that $B^*$ is a dense subset of $B$.
First we note that $q_0=p|_U \in B^*$, as $G'-e_0$ is independent,
so Lemma \ref{qgen1} implies that $f_{G'}(p)$ is generic.
To see the effect of a rotation by $\theta$, consider an edge $e=v_1v_2$ in $D'$
and let $(x_1,y_1)$ and $(x_2,y_2)$ be the co-ordinates of $v_1$ and $v_2$ in $p$.
Co-ordinates in $q_\theta$ are obtained by applying the transformation
$R_\theta = \left( \begin{array}{cc} \cos \theta & -\sin \theta \\ \sin \theta & \cos \theta \end{array} \right)$,
so we have
\[ s_{q_0}(e) = s_p(e) = \frac{y_1-y_2}{x_1-x_2} \mbox{ and }
  s_{q_\theta}(e) = \frac{(x_1-x_2)\sin\theta + (y_1-y_2)\cos\theta}{
  (x_1-x_2)\cos\theta - (y_1-y_2)\sin\theta}, \mbox{ so}\]
\[ s_{q_\theta}(e) = r(s_p(e),\tan\theta), \mbox{ where } r(s,t) = \frac{t+s}{1-st}.\]

Consider any non-zero polynomial $z$ with rational coefficients and $|D-e_0|+|D'|$ variables,
labelled as ${\bold s}=(s_e:e\in D-e_0)$ and ${\bold s}'=(s'_e:e \in D')$.
Substituting ${\bold s}=(s_p(e):e\in D-e_0)$ and ${\bold s}'=(s_{q_\theta}(e):e \in D')$
into $z$ gives a rational function $z^*$ in $(s_p(e):e\in (D-e_0) \cup D')$ and $\tan\theta$.
Note that $z^*$ is not identically zero, as it is non-zero when $\theta=0$ by the hypothesis that $p$ is quasi-generic.
Thus there are only a finite number of values of $\theta \in [0,2\pi)$ for which $z^*$ is zero.
Furthermore, the number of such polynomials $z$ is countable, so there are
only countably many $\theta$ for which $\{s_p(e)\}_{e\in D-e_0}\cup \{s_{q_\theta}(e)\}_{e\in D'}$
is algebraically dependent over $\rat$. Thus $B \sm B^*$ is countable,
so in particular $B^*$ is a dense subset of $B$.

For each $q_\theta\in B^*$, we can apply Lemma \ref{realise1} to obtain a configuration
$p_\theta:V\to \real^2$ such that $l_{p_\theta}(e_1)=l_{p}(e_1)$,  where $e_1$ is the unique length edge of $G'$,
$p_\theta(u)=(0,0)$, $s_{p_\theta}(e)=s_p(e)$ for $e\in D-e_0$ and $s_{p_\theta}(e)=s_{q_\theta}(e)$ for ${e\in D'}$.
Since $(H',q_\theta)$ is globally rigid we have $p_\theta|_U\in \{q_\theta,-q_\theta\}$.
Hence $(G-e_0,p_\theta)$ is equivalent to $(G-e_0,p)$. Replacing $p_\theta$ by $-p_\theta$ if
necessary, we may suppose that $p_\theta|_U=q_\theta$; this determines $p_\theta$ uniquely
by Lemma \ref{realise1}. Now note that the defining conditions of $p_\theta$ are polynomial
equations with coefficients that are continuous functions of $\theta$, except at a finite
set of exceptional values for $\theta$ corresponding to vertical edges in $p_\theta$.
Since $B^*$ is a dense subset of $B$, it follows that $\{p_\theta:q_\theta \in B^*\}$
all belong to the same component of the framework space $S_{G-e_0,p,u}$,
which is $C$, since $q_0=p|_U \in B^*$. Now note that $q_\pi \in B^*$,
as $s_{q_\pi}(e)=-s_p(e)$ for $e \in D'$, so $\{s_p(e)\}_{e\in D-e_0}\cup \{s_{q_\pi}(e)\}_{e\in D'}$
generates the same extension of $\rat$ as  $\{s_p(e)\}_{e\in D-e_0}\cup \{s_p(e)\}_{e\in D'}$.
Therefore $p_\pi \in C$. Since $p_\pi = -p$ by the uniqueness property noted above, $-p \in C$.
\eproof

Lemma \ref{lem:-p} suggests that the family of graphs satisfying the hypotheses of this lemma may be a source of counterexamples to Conjecture \ref{characterise}. Our next result shows that this is not the case.

\begin{theorem}\label{vspecial}
Let $(G,p)$ be a generic realisation of a rigid graph $G=(V;D,L)$. Suppose
that $L$ induces
a length-rigid subgraph of $G$ with at least two edges and $r(G-e)=r(G)-1$ for all $e\in D$. Then $(G,p)$ is not globally rigid.
\end{theorem}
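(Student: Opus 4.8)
The plan is to suppose $(G,p)$ is globally rigid and derive a contradiction, essentially by re-running the proof of Theorem~\ref{thm:ExistsNonGloballyRigidRealisation} with the length-rigid subgraph $H=(U;\es,L)$ (where $U=V(L)$) playing the role of the length-pure $M$-component, and with Lemma~\ref{lem:-p} supplying the $180^\circ$ motions in place of the (unavailable) assumption that \emph{all} generic realisations of $G$ are globally rigid.

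First I would reduce to the case that $G$ is direction irreducible. Since $r(G-e)=r(G)-1$ for every $e\in D$, every direction edge is a coloop of $M(G)$, hence lies in no circuit, so no direction reduction of type (R1) is possible; thus if $G$ is direction reducible it is so via (R2), to a proper induced subgraph $H_0$ with $G/H_0$ direction-pure and the union of two edge-disjoint spanning trees. Then $H_0$ contains all of $L$, so its length subgraph is again $H$; $H_0$ is rigid by the rank count in the proof of Lemma~\ref{2tree}; and each $e\in D\cap E(H_0)$ is still a coloop of $M(H_0)=M(G)|_{E(H_0)}$. Hence $H_0$ satisfies the hypotheses of the theorem and has strictly fewer edges than $G$, so by Theorem~\ref{reduce} and induction on $|E(G)|$ we may assume $G$ is direction irreducible. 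As $G$ is then not redundantly rigid and has $|L|\ge 2$, Lemma~\ref{dired} shows that $G-e$ is bounded (and has rank $r(G)-1$) for every $e\in D$.

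Now take $(G,p)$ quasi-generic with $p(v_0)=(0,0)$ for some $v_0\in U$, let $(G,q_0)$ be the (quasi-generic) reflection of $(G,p)$ in the $x$-axis, and write $D=\{d_1,\dots,d_m\}$. I would construct quasi-generic frameworks $(G,q_1),\dots,(G,q_m)$, each with $q_j(v_0)=(0,0)$, agreeing with $p$ on all length constraints and on the slopes of $d_1,\dots,d_j$, agreeing with $q_0$ on the slopes of $d_{j+1},\dots,d_m$, and with $(H,q_j|_U)$ obtained from $(H,q_0|_U)$ by a rotation about the origin. For the step $q_j\rightsquigarrow q_{j+1}$: the framework $(G-d_{j+1},q_j)$ is bounded but not rigid, Lemma~\ref{lem:-p} places $-q_j$ in the connected component $C$ of $S_{G-d_{j+1},q_j,v_0}$ containing $q_j$, and Lemma~\ref{deletedirection} shows $C$ is diffeomorphic to a circle; moving along $C$ from $q_j$ to $-q_j$ the direction vector of $d_{j+1}$ varies continuously to its negative, so its slope passes through every real value and in particular attains $s_p(d_{j+1})$ at some configuration $q_{j+1}$. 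All constraints of $G-d_{j+1}$ are maintained along $C$, so $q_{j+1}$ agrees with $q_j$ except that $d_{j+1}$ now has slope $s_p(d_{j+1})$; and since $H$ is a length-rigid subgraph of $G-d_{j+1}$ with $v_0$ pinned at the origin, $(H,q_{j+1}|_U)$ is a rotation of $(H,q_j|_U)$ about the origin. Finally $q_{j+1}$ is quasi-generic: for a minimally rigid spanning subgraph $\bar G$ of $G$ the vectors $f_{\bar G}(q_{j+1})$ and $f_{\bar G}(p)$ differ only in signs, hence generate the same field over $\rat$, and the transcendence-degree argument of Lemmas~\ref{qgen1} and~\ref{qgen2} together with Lemma~\ref{quasi} then gives that $q_{j+1}$ is quasi-generic, exactly as in the proof of Theorem~\ref{thm:ExistsNonGloballyRigidRealisation}.

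Then $(G,q_m)$ is equivalent to $(G,p)$, since the length constraints were never disturbed and every direction slope has been corrected, while $(H,q_m|_U)$ is obtained from $(H,p|_U)$ by composing the reflection in the $x$-axis with a rotation about the origin, i.e.\ by a reflection $Z$ in some line through the origin. Because $q_m(v_0)=p(v_0)=(0,0)$ with $v_0\in U$, and $p|_U$ is quasi-generic (so its points are not collinear), $Zp|_U$ equals neither $p|_U+t$ nor $-p|_U+t$ for any $t$; hence $q_m$ is not congruent to $p$, contradicting global rigidity. The main obstacle, and the reason Lemma~\ref{lem:-p} is needed (together with the reduction to the direction irreducible case, which makes Lemma~\ref{dired} and hence the boundedness of every $G-e$ available), is precisely to carry out the $180^\circ$ motions without knowing in advance that $(G,q_j)$ is globally rigid; the remaining work — the bookkeeping of quasi-genericity along the sequence and the handling of the finitely many configurations at which some direction edge becomes vertical — is routine, exactly as in the proof of Theorem~\ref{thm:ExistsNonGloballyRigidRealisation}.
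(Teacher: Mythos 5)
Your proof is correct, but the heart of it runs along a genuinely different track from the paper's. The opening move is the same: induct on direction reductions via Theorem~\ref{reduce} (your verification that the hypotheses pass to the induced subgraph $H_0$ is exactly what is needed) to reach the direction irreducible case, and then invoke Lemma~\ref{dired}(a) to get boundedness of every $G-e$. From there the paper performs the reflection in a single shot: it substitutes the length-rigid subgraph $H$ by a minimally rigid graph $H'$ with one length edge, reflects $(H',p|_U)$ in the $x$-axis, and applies Theorem~\ref{realise1} to produce a realisation $p'$ that already agrees with $p$ on every direction slope except that of one chosen edge $e_0$; one application of Lemma~\ref{lem:-p} places $-p'$ in the relevant component of $S_{G-e_0,p',u}$, and a single intermediate-value pass corrects $e_0$. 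You instead reflect the whole framework and rerun the iterative ``correct one direction edge at a time'' scheme of Theorem~\ref{thm:ExistsNonGloballyRigidRealisation}, with Lemma~\ref{lem:-p} invoked at each of the $|D|$ steps to supply $-q_j\in C$ in place of the global-rigidity assumption that powers that step in Theorem~\ref{thm:ExistsNonGloballyRigidRealisation}. Both arguments rest on the same two pillars --- Lemma~\ref{lem:-p} and the intermediate value theorem applied to the unit direction vector of the deleted edge --- but yours trades the one-shot algebraic construction for $|D|$ rounds of quasi-genericity bookkeeping and the tracking of the accumulated rotation of $H$, while the paper's version needs only one such round. What your route buys is that the main argument does not need Theorem~\ref{realise1} or the $H\to H'$ substitution explicitly (they are still used inside Lemma~\ref{lem:-p}), and it makes the structural parallel with Theorem~\ref{thm:ExistsNonGloballyRigidRealisation} completely transparent; the paper's route is shorter and concentrates all the field-extension work in one place. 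Your closing non-congruence argument (a reflection through the origin of a non-collinear quasi-generic point set is not $\pm$ a translate of it) matches the paper's and is sound.
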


\bproof We proceed by contradiction. Suppose the theorem is false
and choose a counterexample $(G,p)$ such that $G$ is as small as
possible. If $G$ were direction reducible to a subgraph $F$ then we
could apply induction to deduce that $(F,p|_{F})$ is not globally rigid.
Then $(G,p)$ is not globally rigid by Theorem \ref{reduce},
which is a contradiction. Hence $G$ is direction
irreducible. Then, by Lemma \ref{dired},
$G-e$ is bounded for all $e\in D$.

Let $H=(U;\es,L)$ be the length-rigid subgraph of $G$ induced by $L$. Choose
$u\in U$ and $e_0\in D$. By translation we can
replace the assumption that $(G,p)$ is generic by the assumption
that $(G,p)$ is quasi-generic and $p(u)=(0,0)$.
Let $H'=(U;D',L')$ be a minimally rigid graph on the same vertex set
as $H$ with exactly one length edge, $f$, and let $G'$ be obtained
from $G$ by substituting $H$ by $H'$. We can show that $G'$ is minimally rigid as in the proof of Lemma
\ref{lem:-p}.

Let $(H',q)$ be obtained from $(H',p|_U)$ by reflection in the $x$-axis.
Then $s_q(e)=-s_p(e)$ for all $e \in D'$.
Since $\{s_p(e)\}_{e\in D-e_0}\cup \{s_p(e)\}_{e\in D'}$ is generic,
$\{s_p(e)\}_{e\in D-e_0}\cup \{s_q(e)\}_{e\in D'}$ is generic.
Thus we can apply Lemma \ref{realise1}  to obtain
$p':V\to \real^2$ such that $l_{p'}(f)=l_{p}(f)$,
$p'(v)=(0,0)$, $s_{p'}(e)=s_p(e)$ for $e\in D-e_0$ and
$s_{p'}(e)=s_q(e)$ for $e\in D'$.
We have $\rat(f_{G'-e_0}(p'))=\rat(f_{G'-e_0}(p))$,
so $p'$ is quasi-generic by Lemma \ref{qgen2}.
Now consider $(G-e_0,p')$ and let
$C$ be the connected component of the framework space
$S_{G-e_0,p',u}$ which contains $p'$.
By Lemma
\ref{lem:-p}, we have have $-p' \in C$.

The remainder of the proof is similar to that of \cite[Theorem 1.3]{JK2}. Let $e_0=u_0v_0$. For any $p'' \in C$ let
$F(p'')=(p''(u_0)-p''(v_0))/\|p''(u_0)-p''(v_0)\|$
be the unit vector in the direction of $p''(u_0)-p''(v_0)$;
this is well-defined since we never have $p''(u_0)=p''(v_0)$ by \cite[Lemma
3.4]{JJ2}. Consider a
path $P$ in $C$ from $p'$ to $-p'$. Then $F(p'')$ changes continuously
from $F(p')$ to $-F(p')$ along $P$. By the intermediate value theorem there
must be some $p'' \in P$ such that $F(p'')$ is either $F(p)$ or
$-F(p)$. Then $(G,p'')$ is equivalent to $(G,p)$. On the other hand
$p''$ is not congruent to $p$ since $p''|_U$ is obtained from $p|_U$
by a reflection (as well as a translation and a rotation). It
follows that $(G,p)$ is not globally rigid. \eproof

Lemma \ref{dired} and Theorem \ref{vspecial} imply that Conjecture
\ref{characterise} holds for mixed graphs whose length edges induce
a length rigid subgraph. We next verify Conjecture
\ref{characterise} for mixed graphs $G$ with at most $2|V|-1$ edges.

\begin{theorem}\label{sparse}
Suppose $(G,p)$ is a generic realisation of a direction irreducible
mixed graph $G=(V;D,L)$ with $|L|\geq 2$ and $|D|+|L|\leq 2|V|-1$. Then
$(G,p)$ is globally rigid if and only if $G$ is $2$-connected,
direction-balanced, and redundantly rigid.
\end{theorem}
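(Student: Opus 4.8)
The plan is to reduce to the case where $G$ is also redundantly rigid, and then invoke the results already established. First I would dispose of the direction: since $G$ is direction irreducible with $|L|\ge 2$, if $G$ is $2$-connected, direction-balanced and redundantly rigid, then every generic realisation is globally rigid by Theorem \ref{redrigid}; and the necessity of $2$-connectivity and direction-balance is exactly the result of \cite{JJ1}. So the whole content is: if $(G,p)$ is globally rigid and $G$ is direction irreducible with $|L|\ge 2$ and $|D|+|L|\le 2|V|-1$, then $G$ is redundantly rigid. I would argue by contradiction, assuming $G$ is not redundantly rigid, and then apply Lemma \ref{dired}: since $(G,p)$ is globally rigid and $G$ is direction irreducible with $|L|\ge 2$ and not redundantly rigid, we get $r(G-e)=r(G)-1$ for all $e\in D$, $G-e$ is bounded for all $e\in D$, and every length edge lies in a length-pure $M$-circuit of $G$.

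\medskip\noindent
The key step is to show that, under the sparsity hypothesis $|D|+|L|\le 2|V|-1$, the condition that every length edge lies in a length-pure $M$-circuit forces $L$ to induce a \emph{length-rigid} subgraph of $G$. Indeed, $G$ is rigid, so $r(G)=2|V|-2$; combined with $|D|+|L|\le 2|V|-1$ this means the rigidity matroid of $G$ has at most one element of "slack", i.e.\ $G$ contains at most one $M$-circuit up to the structure of its $M$-components. Since $r(G-e)=r(G)-1$ for all $e\in D$, no direction edge lies in any $M$-circuit, so all the dependency is carried by the length edges: the subgraph $H=(U;\emptyset,L)$ induced by $L$ has $r_M(L)=|L|-c$ where $c\ge 1$ is the number of $M$-circuits among the length edges, and the $2|V|-2 = r(G) = r_M(D)+r_M(L) = |D| + |L| - c$ forces $|D|+|L|-2|V|+2 = c$, so $c=1$ when $|D|+|L|=2|V|-1$ (and $c=0$ is excluded since $|L|\ge2$ and some length edge lies in a length-pure $M$-circuit). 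With $c=1$, the length edges of $G$ form a single $M$-circuit together with an independent remainder; since every length edge lies in a length-pure $M$-circuit, that remainder must be empty on the relevant vertex set, and a short count using the Servatius--Whiteley characterisation (a length-pure circuit on $U$ has $2|U|-2$ edges and is length-rigid) shows $H$ spans a length-rigid subgraph. Once we know $L$ induces a length-rigid subgraph of $G$ with at least two edges and $r(G-e)=r(G)-1$ for all $e\in D$, Theorem \ref{vspecial} applies directly and tells us $(G,p)$ is \emph{not} globally rigid — contradicting our assumption.

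\medskip\noindent
I expect the main obstacle to be the bookkeeping in the paragraph above: carefully pinning down the $M$-component and $M$-circuit structure of $L$ from the rank equality and translating "every length edge lies in a length-pure $M$-circuit" into "$L$ induces a length-rigid subgraph." One has to rule out, for instance, the possibility that $L$ splits across several $M$-components each of which is a small length-pure circuit sharing few vertices, or that some length edges are independent and merely adjacent to circuits rather than inside them; the degree/rank counting from Lemma \ref{lem:redMcon} and the constraint $|D|+|L|\le 2|V|-1$ together should close these off, but this is where the argument needs to be written out carefully. Everything after that point is a black-box application of Theorem \ref{vspecial}, so the theorem follows.
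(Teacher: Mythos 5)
Your proposal is correct and follows essentially the same route as the paper: reduce to showing redundant rigidity is necessary, invoke Lemma \ref{dired} to get that every direction edge is a coloop and every length edge lies in a length-pure $M$-circuit, use the count $2|V|-2=r(G)\leq|D|+|L|\leq 2|V|-1$ to deduce a unique $M$-circuit which is therefore length-pure, length-rigid and contains all of $L$, and then apply Theorem \ref{vspecial}. The bookkeeping you flag as the main obstacle is exactly what the paper's (very short) argument does, and your uniqueness-of-the-circuit observation already closes the loopholes you worry about.
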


\bproof
As noted at the beginning of this section, we only need to show that if $(G,p)$ is a globally rigid generic realisation of a direction irreducible
mixed graph $G=(V;D,L)$ with $|L|\geq 2$ and $|D|+|L|\leq 2|V|-1$, then
$G$ is redundantly rigid.
Suppose not. Then
Theorem \ref{dired} implies that $r(G-e)=r(G)-1$ for all $e\in D$
and every $f\in L$ is contained in a length-pure $M$-circuit of $G$.
The facts that $r(G)=2|V|-2$ and $|D|+|L|\leq  2|V|-1$ imply that $G$ contains a unique $M$-circuit.  This $M$-circuit must be length-pure,
so length-rigid, and must contain all length edges of $G$. We can now
apply Theorem \ref{vspecial} to deduce that $(G,p)$ is not globally
rigid. This contradiction implies that $G$ is redundantly rigid. \eproof

We can use Theorem \ref{sparse} to give a simple characterisation of when a
generic realisation of a mixed graph $G$ with at most $2|V|-1$ edges is globally rigid.

\begin{theorem}\label{|D|+|L|= 2|V|-1cor}
Suppose $(G,p)$ is a generic realisation of a mixed graph
$G=(V;D,L)$ with $|D|+|L|\leq  2|V|-1$. Then $(G,p)$ is globally
rigid if and only if $G$ is rigid and either $|L|=1$, or $|D|+|L|=
2|V|-1$ and the subgraph of $G$ induced by the unique circuit in
$M(G)$ is mixed, direction-balanced, and contains $L$.
\end{theorem}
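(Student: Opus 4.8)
The plan is to reduce Theorem \ref{|D|+|L|= 2|V|-1cor} to Theorem \ref{sparse} together with the earlier results on the $|L|=1$ case. First I would observe that global rigidity always implies rigidity (an equivalent framework in particular satisfies all the constraints), so we may assume $G$ is rigid, which gives $r(G)=2|V|-2$ and hence, since $|D|+|L|\le 2|V|-1$, that $G$ contains at most one $M$-circuit; moreover if $G$ has exactly $2|V|-1$ edges it contains exactly one $M$-circuit $C$, and if it has $2|V|-2$ edges it is minimally rigid with no circuit at all. I would then split into cases according to $|L|$.

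\textbf{Case $|L|=1$.} By Lemma \ref{special}, every generic realisation of a rigid mixed graph with exactly one length edge is globally rigid, so the ``if'' direction is immediate here. For the ``only if'' direction there is nothing to prove, since $|L|=1$ is explicitly allowed as one of the alternatives in the statement.

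\textbf{Case $|L|\ge 2$.} Here I want to show $(G,p)$ is globally rigid if and only if $G$ is rigid, $|D|+|L|=2|V|-1$, and the unique $M$-circuit $C$ of $G$ is mixed, direction-balanced and contains $L$. For the forward direction: if $G$ is direction reducible, apply Theorem \ref{reduce} to reduce to a proper subgraph $H$; the difficulty is that $H$ could have $|L|\le 1$, but then we are in a case already understood (if $H$ has no length edge then $(H,p|_H)$ cannot be globally rigid as a direction-length framework unless it is a single vertex — we would need to check this reduction terminates correctly), and otherwise we recurse. For direction irreducible $G$ with $|L|\ge2$ and $|D|+|L|\le 2|V|-1$, Theorem \ref{sparse} tells us $(G,p)$ is globally rigid iff $G$ is $2$-connected, direction-balanced and redundantly rigid. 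So I need to reconcile ``$2$-connected, direction-balanced, redundantly rigid'' with ``$|D|+|L|=2|V|-1$ and the unique circuit is mixed, direction-balanced, contains $L$.'' Since $G$ is redundantly rigid it has at least $2|V|-1$ edges, forcing $|D|+|L|=2|V|-1$ and that the single $M$-circuit $C$ spans all of $V$ and all of $E$ — i.e.\ $E(G)$ itself is the $M$-circuit. Redundant rigidity of a graph whose whole edge set is one circuit is automatic. Direction-balance of $G$ is exactly direction-balance of $C$. The circuit $C$ is mixed rather than pure because a pure circuit would be direction-pure (length-pure is excluded by $|L|\ge2$ forcing $C$ to contain both types if $C$ contains a direction edge at all — here I should be careful) or because $G$ being $2$-connected and direction-balanced with $|L|\ge 2$ rules out $C$ being length-pure. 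And $C$ must contain $L$ since every length edge lies in some circuit and there is only one.

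\textbf{Reverse direction.} Conversely, suppose $G$ is rigid, $|D|+|L|=2|V|-1$, and the unique circuit $C$ of $M(G)$ is mixed, direction-balanced and contains $L$. Since $|E|=2|V|-1=r(G)+1$ and there is a unique circuit, that circuit must be spanning, so $C=E(G)$ and $G=C$ is itself a mixed circuit; in particular $G$ is redundantly rigid. Direction-balance of $G$ equals that of $C$, which holds by hypothesis. It remains to check $G$ is $2$-connected: a mixed $M$-circuit on the whole vertex set is $2$-connected — a cut vertex would split the circuit, and using the count $|E(C)|=2|V(C)|-1$ together with the matroid-circuit bounds $|F'|\le 2|V(F')|-2$ for proper pure subsets and $\le 2|V(F')|-1$ otherwise, one derives a contradiction. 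With $2$-connectivity, direction-balance and redundant rigidity in hand I would like to invoke Theorem \ref{sparse}, but that requires $G$ to be direction irreducible; so I should first argue that a mixed $M$-circuit containing $\ge 2$ length edges is direction irreducible — (R1) is impossible since no direction edge lies in a direction-pure circuit (the only circuit is $C$, which is mixed), and (R2) is impossible since $G/H$ being the union of two edge-disjoint direction spanning trees would force enough direction edges to make $G-e$ non-rigid for some direction edge $e$, contradicting that $E(G)$ is a circuit. Then Theorem \ref{sparse} gives global rigidity.

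The main obstacle I anticipate is the bookkeeping in the forward direction showing that ``$2$-connected, direction-balanced, redundantly rigid with $|D|+|L|\le 2|V|-1$'' forces the single $M$-circuit to be exactly $E(G)$ and to be mixed (not length-pure), and dually in the reverse direction proving that a mixed $M$-circuit on $V$ is automatically $2$-connected and direction irreducible — both of these are short counting arguments with the Servatius–Whiteley rank formula, but they need to be done carefully, paying attention to the distinction between pure and mixed subsets. I expect no genuinely new ideas are needed beyond combining Theorems \ref{reduce}, \ref{sparse}, \ref{vspecial} and Lemmas \ref{special}, \ref{dired}.
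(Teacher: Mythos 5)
Your sufficiency argument contains a genuine gap: you assert that since $|E|=2|V|-1=r(G)+1$ and $M(G)$ has a unique circuit, ``that circuit must be spanning, so $C=E(G)$.'' This is false. A matroid with $|E|=r+1$ has a unique circuit, namely the fundamental circuit of the extra element with respect to a basis, but that circuit can be a proper subset of $E$ (compare a tree plus one edge in a graphic matroid). Concretely, take a $2$-connected, direction-balanced mixed $M$-circuit $H$ and attach new vertices by pairs of direction edges so that $G/H$ is direction-pure and the union of two edge-disjoint spanning trees: then $G$ is rigid with $2|V|-1$ edges, its unique circuit is $H\subsetneq E(G)$, the theorem asserts $(G,p)$ is globally rigid, yet $G$ is neither redundantly rigid nor direction irreducible, so your plan of verifying the hypotheses of Theorem \ref{sparse} for $G$ cannot work. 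The paper handles exactly this situation: it shows $(H,p|_H)$ is globally rigid (citing the result of Jackson and Jord\'an on direction-balanced mixed circuits \cite[Theorem 6.2]{JJ1}; alternatively Theorem \ref{thm:katie} applies since a mixed circuit is rigid and $M$-connected), then uses the edge count $|D\sm D'|=2|V\sm V'|$ together with direction-independence and Lemma \ref{dindred} to conclude that $G$ is direction reducible to $H$, and finally transfers global rigidity from $H$ to $G$ via Theorem \ref{reduce}. None of this machinery appears in your sufficiency argument, and without it the case $C\neq E(G)$ is simply not covered.

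Your necessity direction is closer to the paper's (the direction irreducible case via Theorem \ref{sparse} is right, and the counting showing $G=C$ is mixed there is fine), but the recursive step for direction reducible $G$ is only gestured at: you worry unnecessarily that the reduced graph might lose length edges (it cannot, since (R1) deletes only a direction edge and (R2) keeps all of $L$ inside $H$), while omitting the bookkeeping that is actually needed, namely that $G$ is direction-independent (no direction-pure circuits, since the unique circuit contains $L$), that Lemma \ref{dindred} gives $|D\sm D_1|=2|V\sm V_1|$ and hence $|D_1|+|L|=2|V_1|-1$ so the induction hypothesis applies to $G_1$, and that the unique circuit of $G_1$ coincides with that of $G$.
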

\bproof
We first prove sufficiency. Suppose that $G$ is rigid. If $|L|=1$ then $(G,p)$ is globally rigid by
 Lemma \ref{special}. So we may suppose further that
$|L|\geq 2$, $|D|+|L|=2|V|-1$, $L$ is contained in the unique $M$-circuit $H$ of $G$, and $H$ is mixed and direction-balanced.
Let $H=(V';D',L)$. Then $(H,p|_H)$ is globally rigid by \cite[Theorem 6.2]{JJ1}.
Since $H$ is mixed and is the unique $M$-circuit of $G$, $G$ is direction independent and $H$ is an induced subgraph of $G$.
We also have  $|D|+|L|= 2|V|-1$ and $|D'|+|L'|= 2|V'|-1$ so
$|D\sm D'|= 2|V\sm V'|$.
Hence $G$ is direction reducible to $H$ by Lemma \ref{dindred}.
Theorem \ref{reduce} now implies that $(G,p)$ is globally rigid.

We next prove necessity. Suppose that $(G,p)$ is globally rigid. Then $G$ is rigid. If $|L|=1$ then there is nothing more to prove so we may assume that
$|L|\geq 2$. Since  $(G,p)$ is globally rigid, each length edge of $G$
is contained in an $M$-circuit by Theorem \ref{hend}(a). Hence $|D|+|L|= 2|V|-1$ and $L$ is contained in the unique $M$-circuit $H$ of $G$. If $G$ is
direction irreducible then $G$ must be
direction-balanced and redundantly rigid by Theorem \ref{sparse}, so $G=H$ and $G$ is a direction balanced mixed $M$-circuit.
 Hence we may suppose that $G$ has a direction reduction to a
subgraph $G_1=(V_1;D_1,L)$. Since   each length edge of $G$
is contained in $H$, $G$ has no direction-pure $M$-circuits, and hence
$G$ is direction-independent.
By Lemma \ref{dindred}, $|D\sm D_1|= 2|V\sm V_1|$. Since $|D|+|L|= 2|V|-1$, we may deduce that
$|D_1|+|L|= 2|V_1|-1$.
By Theorem \ref{reduce}, $(G_1,p|_{G_1})$ is globally rigid.
We may now use induction to deduce that the unique $M$-circuit $H_1$ contained in
$G_1$ is mixed, direction-balanced, and contains $L$. Since $G_1$ is a subgraph of $G$ we have  $H_1=H$.
\eproof

\end{document}